\newcommand{\Ol}{{\mathcal O}}
\newcommand{\f}{\varphi}
\newcommand{\E}{{\mathcal E}}
\newcommand{\V}{{\mathcal V}}
\newcommand{\C}{{\mathcal C}}
\newcommand{\cH}{{\mathcal H}}
\newcommand{\El}{{\mathcal L}}
\newcommand{\li}{{\mathcal L}}
\newcommand{\pu}{{\mathbb P^1}}
\newcommand{\pd}{{\mathbb P^2}}
\newcommand{\proj}{\mathbb P}
\DeclareMathOperator{\loc}{\mathrm{Locus}}
\newcommand{\ratcurves}{\textrm{Ratcurves}^n(X)}
\DeclareMathOperator{\codim}{\mathrm{codim}}
\DeclareMathOperator{\cone}{NE}
\DeclareMathOperator{\cycl}{N_1}
\DeclareMathOperator{\pic}{Pic}
\DeclareMathOperator{\Exc}{Exc}
\newcommand{\kd}{-K_X \cdot}
\newcommand{\W}{{\mathcal{W}}}
\newcommand{\conx}[1]{\cone\,(#1,X)}
\newcommand{\cycx}[1]{\cycl(#1,X)}
\newcommand{\rc}[2]{#1 \xymatrix{\ar@{-->}[r] & }{#2}}
\newtheorem{theorem}{Theorem}[section]
\newtheorem{lemma}[theorem]{Lemma}
\newtheorem{proposition}[theorem]{Proposition}
\newtheorem{example}[theorem]{Example}
\theoremstyle{definition}
\newtheorem{definition}[theorem]{Definition}
\theoremstyle{remark}
\newtheorem{remark}[theorem]{Remark}
\begin{document}
\title{Rationally cubic connected manifolds II}

\renewcommand{\theequation}{{\arabic{section}.\arabic{theorem}.\arabic{equation}}
}
\author{Gianluca Occhetta}
\author{Valentina Paterno}

\address{Dipartimento di Matematica, via Sommarive 14, I-38123 Povo (TN)}

\email{gianluca.occhetta@unitn.it}
\email{paterno@science.unitn.it}

\thanks{Partially supported by  MIUR (PRIN project:
Propriet\`a
geometriche delle variet\`a reali e complesse)}

\subjclass[2000]{Primary 14M22; Secondary 14J40, 14J45, 14E30}
\keywords{Rationally connected manifolds, rational curves, Fano manifolds}

\maketitle

\begin{abstract} We study smooth complex projective polarized varieties $(X,H)$ of dimension $ n \ge 2$ which admit a dominating family $V$ of rational curves of $H$-degree $3$, such that two general points of $X$ may be joined by a curve parametrized by $V$ and which do not admit a covering family of lines (i.e. rational curves of $H$-degree one).
We prove that such manifolds are obtained from RCC manifolds of Picard number one by blow-ups along smooth centers.\\ If we further assume that $X$ is a Fano manifold, we obtain a stronger result, classifying all Fano RCC manifolds of Picard number $\rho_X \ge 3$.\end{abstract}

\section{Introduction}

In our recent paper \cite{OP}, inspired by the classification of conic-connected manifolds given in \cite{IR}, we started the study of  \emph{rationally cubic connected} manifolds (RCC-manifolds, for short), i.e. smooth complex projective polarized varieties $(X,H)$ of dimension $ n \ge 2$ which are rationally connected by rational curves of degree $3$ with respect to a fixed ample line bundle $H$.\\
In \cite{OP} we considered manifolds covered by lines (i.e. rational curves of degree one with respect to $H$), proving that the Picard number of such manifolds is at most three and that if equality holds the manifold has an adjunction scroll structure over a smooth variety.\par
\medskip
In the present paper we will complete our task by dealing with RCC-manifolds not covered by lines. Our main result shows that such manifolds are obtained by RCC-manifolds of Picard number one by blow-ups along smooth centers. More precisely we have the following:

\begin{theorem}\label{main} Let $(X,H)$ be a RCC-manifold with respect to $V$, not covered by lines. Then there exists a polarized manifold $(X',H')$  of Picard number one not covered by lines and a contraction $\f_\Sigma:X \to X'$ expressing $X$ as a blow-up of $X'$ along disjoint centers $T_i$ with exceptional divisors $E_i$ such that  $H \simeq \f_\Sigma^*H' - \sum E_i$. The pair $(X',H')$ is RCC with respect to $V'$, the family of deformations of the image of a general curve parametrized by $V$.\\
Moreover either $(X',H')\! \simeq\!  (\proj^n, \Ol_{\proj}(3)) $ or $\pic(X')=\langle H' \rangle $ and $-K_{X'}\!= \frac{n+1}{3}H'$.\linebreak  In the latter case we have $\dim T_i < (n+1)/3$ for every $i$.
\end{theorem}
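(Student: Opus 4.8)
The plan is to describe $X$ by means of a single extremal contraction of $\overline{\cone}(X)$ and then transport the cubic family to the base; if $\rho(X)=1$ there is nothing to contract and one passes directly to the analysis of $(X',H')=(X,H)$ below. Assuming $\rho(X)\ge 2$, I would first analyse the rational curves of small $H$-degree on $X$: for a general $[C]\in V$ the dimension estimates for the loci and chain-loci of curves of $V$ control $-K_X\cdot C$, and, unless $V$ is already a minimal dominating family, degenerating a general curve of $V$ produces families of lines on $X$, which by hypothesis cannot be covering. The first goal is to show that every extremal ray $R$ of $\overline{\cone}(X)$ with $H\cdot R=1$ is birational and divisorial of ``blow-up type'', and that at least one such ray exists. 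Such an $R$ cannot be of fiber type, for the deformations of the minimal curve $\ell$ generating $R$ are $H$-lines and would cover $X$; and it cannot be small, for a minimal family in $R$ is unsplit and the Wi\'sniewski inequality $\dim\Exc(\f_R)+(-K_X\cdot\ell)\ge n+1$, combined with the bound on $-K_X\cdot\ell$ coming from the cubic family, forbids exceptional loci of codimension $\ge 2$. I would then let $\Sigma$ be the face of $\overline{\cone}(X)$ spanned by all such rays, check that $-K_X$ is positive on $\Sigma$, and obtain the contraction $\f_\Sigma\colon X\to X'$.

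Next I would read off the geometry of $\f_\Sigma$. For each extremal ray $R_i\subset\Sigma$, with exceptional divisor $E_i$ and $T_i:=\f_{R_i}(E_i)$, the minimal curves $\ell_i$ of $R_i$ are $H$-lines with $E_i\cdot\ell_i=-1$, so the fibers of $E_i\to T_i$ are covered by $H$-lines on which $H$ restricts to $\Ol(1)$; by the classical adjunction-theoretic description of such contractions (Ando, Fujita), $\f_{R_i}$ is the blow-up of a smooth subvariety $T_i$, with $E_i\simeq\proj(N^*_{T_i})$. A local analysis, again using that $X$ is not covered by lines, shows that the $E_i$ are pairwise disjoint, so the $T_i$ are disjoint and smooth and the $\f_{R_i}$ can be carried out simultaneously; this yields $\f_\Sigma\colon X\to X'$ with $X'$ smooth and $H\simeq\f_\Sigma^*H'-\sum E_i$, where $H':=(\f_\Sigma)_*H$ is ample. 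There remains the crucial point $\rho(X')=1$: once all the $H$-degree-one rays have been contracted, any surviving extremal ray of $X'$ would, twisting $H$ down again along the $E_i$, either revive a covering family of lines on $X$ or give a fiber type contraction of $X'$ whose fibers, being covered by curves of bounded $H'$-degree, would be too small for cubic-connectedness. I expect this, together with the disjointness and simultaneous contractibility of the $E_i$, to be the main technical obstacle.

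It then remains to study $(X',H')$. The images under $\f_\Sigma$ of the curves of $V$ form a dominating family $V'$ of rational curves of $H'$-degree $3$ joining two general points, so $(X',H')$ is RCC with respect to $V'$, and it is not covered by lines, since a covering family of $H'$-lines would pull back, off the negligible locus $\bigcup E_i$, to a covering family of $H$-lines on $X$. As $\rho(X')=1$ we have $-K_{X'}\equiv\lambda H'$ with $\lambda\in\mathbb Q_{>0}$, so $X'$ is Fano; evaluating on a general $C'\in V'$ and using that $V'$ joins two general points — so that the relevant locus, or chain-locus, of curves of $V'$ through a general point fills $X'$ — should force $3\lambda=-K_{X'}\cdot C'=n+1$, that is $-K_{X'}=\tfrac{n+1}{3}H'$. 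If the general $C'$ is irreducible and $V'$ is standard (unsplit), the Cho--Miyaoka--Shepherd-Barron characterization of projective space gives $X'\simeq\proj^n$ with $C'$ a line, hence $H'\simeq\Ol_{\proj}(3)$; otherwise $V'$ is non-standard, $X'\not\simeq\proj^n$, and a short argument shows $\pic(X')=\langle H'\rangle$. Finally $\dim T_i<(n+1)/3$ follows by intersecting a general curve of $V$, whose image meets $T_i$, with $E_i$: the fiber dimension of $E_i\to T_i$ equals $-K_X\cdot\ell_i=\codim_{X'}T_i-1=n-1-\dim T_i$, and comparison with $H'\cdot C'=3$ and $-K_{X'}=\tfrac{n+1}{3}H'$ bounds it from below, hence $\dim T_i$ from above. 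The adjunction-theoretic steps and the numerical analysis on $X'$ are technical but standard once $\Sigma$ and the blow-up structure are available.
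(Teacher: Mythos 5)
Your overall architecture (find degree-one divisorial rays, contract them simultaneously, analyse the Picard-number-one base) matches the shape of the paper's argument, but the proposal leaves unproved exactly the point on which the whole proof turns, and the steps you do sketch rely on claims that are not justified as stated.

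The central gap is the statement $H\simeq\f_\Sigma^*H'-\sum E_i$ with $H'\cdot V'=3$. This is equivalent to $E_i\cdot V=0$ for every exceptional divisor, i.e.\ to the assertion that a \emph{general} cubic of $V$ is disjoint from every $E_i$. You flag the surrounding issues ($\rho(X')=1$, disjointness, simultaneous contractibility) as ``the main technical obstacle'' but give no argument, and the informal reasons you offer (a surviving ray would ``revive a covering family of lines'' or give too-small fibers) do not touch the possibility $E_i\cdot V>0$, in which case the image family $V'$ would have $H'$-degree $3+\sum E_i\cdot V>3$ and the conclusion fails. In the paper this is the content of Sections 5 and 6: for $\rho_X=2$ one first shows (Proposition \ref{proprho2}) that up to numerical equivalence there is only one degeneration pair $(\li,C)$, and then Theorem \ref{thmrho2} rules out $E\cdot V>0$ by a delicate argument on the universal family over $\V_x$, decomposing a curve $\Gamma$ lying over the fiber $F$ of the prospective blow-up via Koll\'ar's II.4.19 and deriving a numerical contradiction; for higher Picard number one needs in addition the adjoint reduction supported by $K_X+(n-1)H$ to handle the first-kind divisors. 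None of this is recoverable from the sketch.

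A second, related gap is the claim that every extremal ray $R$ with a curve of $H$-degree one is birational, divisorial and a smooth blow-up. Your fiber-type exclusion conflates ``$R$ contains a curve $\ell$ with $H\cdot\ell=1$'' with ``$R$ contains a \emph{dominating} family of $H$-lines'': a fiber-type ray could be covered by curves of class $a[\ell]$ with $a\ge2$, and the paper excludes this instead by a dimension count against $\loc(C^1)_x$ (Step 3 of Proposition \ref{proprho2}). Your small-contraction exclusion via the fiber locus inequality needs a lower bound on the length $-K_X\cdot\ell$ that is not available for an arbitrary degree-one ray. And the smooth-blow-up conclusion (all fibers of dimension exactly $-K_X\cdot\ell_i$, so that Theorem \ref{blowup} applies) is obtained in the paper only because $E_i=\loc(\li^i)_{\loc(C^i)_x}$ forces every fiber to meet $\loc(C^i)_x$, giving $\dim F_i+\dim\loc(C^i)_x=n$; working with abstract degree-one rays you lose the conic families $C^i$ that provide this upper bound. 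Indeed, even the extremality of $\mathbb R_+[\li^i]$ is deduced in the paper only \emph{after} one knows $E_i\cdot\li^i<0$, which again comes from $E_i\cdot V=0$. The final parts of your sketch (identification of $X'$ when $V'$ is minimal, the index computation, and the bound $\dim T_i<(n+1)/3$ via intersecting $\loc(C^i)_x$ with the locus of a minimal family of $X'$) are essentially the paper's arguments and are fine modulo the missing core.
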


First of all we deal with the case of RCC-surfaces, giving a complete classification of them in Section 3, then we move to the general case.\\
The main idea is the following: if the Picard number of $X$ is greater than one, then for every point $x \in X$ the cubics parametrized by $V$ passing through $x$ must degenerate into reducible cycles whose components are not numerically proportional to $V$. Clearly these degenerations can be into cycles consisting either of three lines or of a line and a conic. Since we are assuming that $X$ is not covered by lines, the latter happens for a general point, and moreover the irreducible component through the general point is the conic, so, for each possible degeneration we get a dominating family of conics and a family of lines.\\
It turns out that the loci of the families of lines arising in this way are divisors, which we will call {\em divisors of $V$-lines};  the main point of the proof is to show that these divisors are (disjoint) exceptional divisors of smooth blow-ups and that they do not meet a general cubic, so that, after blowing them down, we still have a RCC-manifold.\\
The hardest case is that of manifolds of Picard number two, which is treated in Section \ref{secpic2}, while the general one is settled in Section \ref{secgen}.\par
\smallskip
If we further assume that the manifold is Fano, the results are much stronger and for $\rho_X \ge 3$ we have a complete classification:

\begin{theorem}\label{th:15} Let $(X,H)$ be a polarized Fano manifold of dimension $n >2$ and $\rho_X \ge 3$. Suppose that $X$ is RCC with respect to a family $V$ and doesn't admit a covering family of lines. Then  $\rho_X=3$, $X$ has a contraction $\pi:X \to \proj^n$ which is the blow-up of $\proj^n$ along two disjoint centers $T_1$ and $T_2$ which can be: 
\begin{enumerate}
\item [(1)] two linear spaces $\Lambda_1,\Lambda_2$ such that
$$\begin{array}{ll}
\Lambda_1\cap\Lambda_2=\emptyset,&
\dim \Lambda_1+\dim \Lambda_2=n-2,
\end{array}$$	
	\item [(2)]  a linear space $\Lambda_1$ and  a smooth quadric $Q_1\subset\Lambda_2\simeq\mathbb{P}^{\dim Q_1+1}$ such that
$$\begin{array}{lll}
\Lambda_1\cap\Lambda_2=\emptyset,&
 \dim Q_1\geq\frac{n}{2}-1,&\dim \Lambda_1+\dim Q_1=n-2,
\end{array}$$	
	\item [(3)] two smooth quadrics  $Q_1\subset\Lambda_1\simeq\mathbb{P}^{\frac{n}{2}}$ and $Q_2\subset\Lambda_2\simeq\mathbb{P}^{\frac{n}{2}}$ such that 
$$\begin{array}{lll}
Q_1 \cap Q_2 = \emptyset,&
\dim \Lambda_1\cap\Lambda_2=0,&
\dim Q_1=\dim Q_2=\frac{n}{2}-1,
\end{array}$$		
\end{enumerate}
and, denoted by $E_1$ and $E_2$ the exceptional divisors,  $H \simeq \pi^*\mathcal{O}_{\mathbb{P}^n}(3)-E_1-E_2$.
\end{theorem}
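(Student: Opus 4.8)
The strategy is to apply Theorem~\ref{main} and then read off the geometry of the centers from the positivity of $-K_X$ and of $H$ along strict transforms of lines.

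By Theorem~\ref{main} there is a birational morphism $\f_\Sigma\colon X\to X'$ presenting $X$ as the blow-up of a polarized manifold $(X',H')$ of Picard number one, not covered by lines, along pairwise disjoint smooth centers $T_1,\dots,T_k$ with exceptional divisors $E_i$, with $H\simeq\f_\Sigma^*H'-\sum E_i$, with $(X',H')$ RCC with respect to $V'$, and with $-K_{X'}=\tfrac{n+1}{3}H'$ (valid in either alternative of Theorem~\ref{main}). In particular $X'$ is Fano and $\rho_X=1+k$, so $\rho_X\ge3$ gives $k\ge2$. For a general $C\in V$ one has $H\cdot C=3=H'\cdot\f_{\Sigma*}C$, hence $\sum E_i\cdot C=0$ and so $E_i\cdot C=0$ for every $i$; thus $C$ avoids the exceptional divisors, $\f_\Sigma$ maps it isomorphically onto its image, and every curve of $V'$ has $-K_{X'}$-degree $n+1$.

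The first step is to prove that $(X',H')\simeq(\proj^n,\Ol_{\proj}(3))$ and that $k=2$. Given two of the centers $T_i,T_j$, pick general points $p\in T_i$, $q\in T_j$ and a curve $C'\in V'$ through them (such a curve exists since the members of $V'$ through a general point dominate $X'$); its strict transform $\widetilde{C'}$ meets $E_i$ and $E_j$ transversally at one point each and no other $E_l$, so ampleness of $-K_X$ forces $0<-K_X\cdot\widetilde{C'}=(n+1)-(c_i-1)-(c_j-1)$, i.e. $\dim T_i+\dim T_j\ge n-2$, where $c_l=\codim_{X'}T_l$. If $X'$ were not $\proj^n$ then $\dim T_l<(n+1)/3$ for all $l$ by Theorem~\ref{main}, incompatible with $\dim T_i+\dim T_j\ge n-2$ when $n>2$; hence $(X',H')=(\proj^n,\Ol_{\proj}(3))$, $\f_\Sigma=\pi$, and the curves of $V'$ are lines of $\proj^n$. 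Next, ampleness of $H$ says that every line $L$ of $\proj^n$ not contained in a center satisfies $\sum_l\mathrm{length}(L\cap T_l)\le2$; in particular no $T_l$ has a trisecant, $\mathrm{Sec}(T_i)$ (the closed secant variety) is disjoint from $T_j$ for $i\ne j$, and no line meets three distinct centers. From the last property, for any three centers one gets $\dim\mathrm{Join}(T_i,T_j)+\dim T_l\le n-1$, i.e. (as a general line meeting $T_i$ and $T_j$ lies in neither) $\dim T_i+\dim T_j+\dim T_l\le n-2$; combined with $\dim T_i+\dim T_j\ge n-2$ this makes every center a point when $k\ge3$, whence $\dim T_i+\dim T_j=0<n-2$, absurd. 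So $k=2$ and $\rho_X=3$.

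It remains to classify $T_1$ and $T_2$; write $d_i=\dim T_i$. Since $\mathrm{Sec}(T_1)$ is disjoint from $T_2$ we have $\dim\mathrm{Sec}(T_1)\le n-1-d_2$, and since $\mathrm{Sec}(T_1)\supseteq T_1$, with $\dim\mathrm{Sec}(T_1)>d_1$ unless $T_1$ is linear, we get $d_1+d_2\le n-1$, hence $d_1+d_2\in\{n-2,n-1\}$. If $d_1+d_2=n-1$ then $T_1,T_2$ are linear and $\mathrm{Join}(T_1,T_2)=\proj^n$, so the strict transforms of the lines meeting both centers would cover $X$, contradicting that $X$ is not covered by lines; therefore $d_1+d_2=n-2$, and whenever $T_i$ is not linear the inequalities above become equalities, so $\dim\mathrm{Sec}(T_i)=d_i+1$. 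By the classification of irreducible nondegenerate varieties whose secant variety has dimension one more than the variety (they are hypersurfaces in their span) together with the absence of trisecants, such a $T_i$ is a smooth quadric in $\Lambda_i:=\langle T_i\rangle\simeq\proj^{d_i+1}$. Examining the three cases --- both linear, one linear and one quadric, both quadrics --- disjointness of the spans and secant varieties forces $\Lambda_1\cap\Lambda_2=\emptyset$ in the first two and $\dim(\Lambda_1\cap\Lambda_2)=0$ in the third (a positive-dimensional linear subspace of a $\proj^N$ meets a quadric hypersurface of that $\proj^N$ nontrivially), while positivity of $-K_X$ along a secant line of a quadric $T_i$ gives $0<(n+1)-2(c_i-1)$, i.e. $\dim T_i\ge n/2-1$, which in the third case combined with $d_1+d_2=n-2$ yields $d_1=d_2=n/2-1$. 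These are exactly conditions $(1)$--$(3)$, and $H\simeq\pi^*\Ol_{\proj^n}(3)-E_1-E_2$ is the specialization of $H\simeq\f_\Sigma^*H'-\sum E_i$.

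The main obstacle is this last step: converting the numerical positivity of $H$ into the rigid description of the centers as linear spaces or smooth quadrics. The decisive geometric inputs are the dimension estimate for secant varieties and the non-existence of trisecant lines, and the bookkeeping is delicate because one must test both $-K_X$ and $H$ against all the relevant lines --- ordinary, tangent, secant and multisecant to one or several of the centers --- at the same time.
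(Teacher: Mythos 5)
Your overall architecture is sound, and the second half of your argument (once $X'\simeq\proj^n$, $k=2$ and $\dim T_1+\dim T_2=n-2$ are in hand) is essentially the paper's: absence of trisecants, the secant-variety dimension estimate against $T_j$, Lemma (\ref{sec}), and the Fano bound $\dim T_i\ge n/2-1$ for a quadric center. But there is a genuine gap at the crucial first step, the inequality $\dim T_i+\dim T_j\ge n-2$. You obtain it from an \emph{irreducible} curve $C'\in V'$ through a point of $T_i$ and a point of $T_j$, justified by ``the members of $V'$ through a general point dominate $X'$''. Points of the centers are not general points of $X'$, and $V'$ is not locally unsplit (if it were, $X'$ would already be $\proj^n$ by Proposition (\ref{pic1})); the RCC property only guarantees a connected \emph{cycle} of the Chow family through such a special pair of points, and if that cycle is reducible --- in particular if a component lies inside one of the centers, so that its strict transform sits in an exceptional divisor --- your computation $0<-K_X\cdot\widetilde{C'}=(n+1)-(c_i-1)-(c_j-1)$ breaks down. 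This is exactly where the paper does its real work: it derives $-K_X\cdot(\li^i+\li^k)\le n$ not from a curve of $V'$ but from the degeneration of the dominating conic family $C^k$ along $E_i$ (a reducible member $l_k+\bar l_k$ of $\mathcal C^k$ with $E_i\cdot l_k<0$ forces $[l_k]=[\li^i]$, whence $-K_X\cdot C^k\ge-K_X\cdot\li^i+1$), and combines this with the a priori inequality (\ref{liml}) to get equality, hence $\dim T_i+\dim T_k=n-2$.

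A second, smaller problem: to exclude the alternative $(X',H')\not\simeq(\proj^n,\Ol(3))$ you invoke only the bound $\dim T_l<(n+1)/3$ from Theorem (\ref{main}) and claim it is ``incompatible with $\dim T_i+\dim T_j\ge n-2$ when $n>2$''; numerically $2(n+1)/3>n-2$ for all $n<8$, so this does not close for small $n$ without an extra observation --- e.g.\ that in the second alternative $3$ must divide $n+1$ (the index is an integer), so $\dim T_l\le(n-2)/3$, or Proposition (\ref{target}), which is what the paper uses and which gives $\dim T_i=(n-2)/3$ exactly. Both points are repairable, but the first one requires importing the paper's degeneration argument rather than a one-line fix.
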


Section \ref{secfano} is devoted to the proof of this theorem, and to some other considerations on RCC Fano manifolds of Picard number two.\\
In the last section we examine in detail the manifolds appearing in Theorem (\ref{th:15}), describing their cone of curves, their contractions and their families of rational curves.


\section{Background material}


We gather in this section the basic definitions and results regarding families of rational curves
and Mori theory that we are going to use.

\subsection{Families of rational curves and of rational $1$-cycles}\par
\medskip

\begin{definition} \label{Rf}
A {\em family of rational curves} $V$ on $X$ is an irreducible component
of the scheme $\ratcurves$ (see \cite[Definition II.2.11]{Kob}).\\
Given a rational curve we will call a {\em family of
deformations} of that curve any irreducible component of  $\ratcurves$
containing the point parametrizing that curve.\\
We define $\loc(V)$ to be the set of points of $X$ through which there is a curve among those
parametrized by $V$; we say that $V$ is a {\em covering family} if ${\loc(V)}=X$ and that $V$ is a
{\em dominating family} if $\overline{\loc(V)}=X$.\\
By abuse of notation, given a line bundle $H \in \pic(X)$, we will denote by $H \cdot V$
the intersection number $H \cdot B$, with $B$ any curve among those
parametrized by $V$.
We will say that $V$ is {\em unsplit} if it is proper; clearly, an unsplit dominating family is
covering.\\
We denote by $V_x$ the subscheme of $V$ parametrizing rational curves
passing through a point $x$ and by $\loc(V_x)$ the set of points of $X$
through which there is a curve among those parametrized by $V_x$. If, for a general point $x \in \loc(V)$,
$V_x$ is proper, then we will say that the family is {\em locally unsplit}.
Moreover, we say that $V$ is {\em generically unsplit} if the fiber of the double-evaluation map
$$\begin{array}{rllc}
\Pi:&V&\rightarrow& X\times X\\
&[f]&\mapsto&(f(q),f(p))
\end{array}$$
over the general point of its image has dimension  $0$.\end{definition}

\begin{definition}\label{CF}
We define a {\em Chow family of rational 1-cycles} $\W$ to be an irreducible
component of  $\textrm{Chow}(X)$ parametrizing rational and connected 1-cycles.\\
If $V$ is a family of rational curves, the closure of the image of
$V$ in $\textrm{Chow}(X)$, denoted by $\V$, is called the {\em Chow family associated to} $V$.
If $V$ is proper, { i.e.} if the family is unsplit, then $V$ corresponds to the normalization
of the associated Chow family $\V$.
\end{definition}

\begin{definition}
Let $V$ be a family of rational curves and let $\V$ be the associated Chow family. We say that
$V$ (and also $\V$) is {\em quasi-unsplit} if every component of any reducible cycle parametrized by 
$\V$ has numerical class proportional to the numerical class of a curve parametrized by $V$.
\end{definition}

\begin{definition} We say that $k$ quasi-unsplit families $V^1, \dots, V^k$ are numerically independent
if in $\cycl(X)$ we have $\dim \langle [V^1], \dots, [V^k]\rangle =k$.
\end{definition}

For special families of rational curves  we have useful dimensional estimates. The basic one is the following:

\begin{proposition} (\cite[Corollary IV.2.6]{Kob}\label{iowifam})
Let $V$ be a family of rational curves on $X$ and $x \in \loc(V)$ a point such that every component of $V_x$ is proper.
Then
  \begin{itemize}
       \item[(a)] $\dim \loc(V)+\dim \loc(V_x) \ge \dim X  -K_X \cdot V -1$;
       \item[(b)] every irreducible component of $\loc(V_x)$ has dimension $\ge -K_X \cdot V -1$.
    \end{itemize}
\end{proposition}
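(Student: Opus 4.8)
The plan is to reduce this to two standard ingredients: the deformation-theoretic lower bound for the dimension of the spaces of morphisms $\pu\to X$, and Mori's bend-and-break; the properness assumption on the components of $V_x$ will be used exactly to make bend-and-break bite. First I would recall the Hom-scheme estimates: at any $[f]\in\om(\pu,X)$ with $f_*[\pu]$ numerically equivalent to a curve of $V$ one has $\dim_{[f]}\om(\pu,X)\ge\chi(\pu,f^*T_X)=\dim X-K_X\cdot V$, while on the subscheme of maps sending a fixed point $0\in\pu$ to $x$ one has everywhere dimension $\ge\chi(\pu,f^*T_X(-1))=-K_X\cdot V$. Passing to the quotient by $\mathrm{PGL}_2$ (respectively by the $2$-dimensional stabiliser of $0$), this gives: every component of $V$ has dimension $\ge\dim X-K_X\cdot V-3$, and every component of $V_x$ has dimension $\ge-K_X\cdot V-2$.

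Next I would bring in the universal families $p\colon U\to V$, $q\colon U\to X$ over $V$ and $p_x\colon U_x\to V_x$, $q_x\colon U_x\to X$ over $V_x$; since $p$ and $p_x$ are $\proj^1$-bundles in the \'etale topology we have $\dim U=\dim V+1$ and $\dim U_x=\dim V_x+1$ on each component, with $\overline{q(U)}=\loc(V)$ and $\overline{q_x(U_x)}=\loc(V_x)$. The key step is to show that $q_x$ is generically finite on each component of $U_x$: if not, a general point $y$ of some component of $\loc(V_x)$ would lie on a positive-dimensional subfamily of curves of $V_x$ all passing through the two fixed points $x$ and $y$, and replacing this subfamily by its closure in the proper variety $V_x$ would produce a complete positive-dimensional family of irreducible, generically reduced rational curves through two fixed points, contradicting bend-and-break, which forces a reducible or non-reduced limiting cycle that cannot be parametrized by $\ratcurves$. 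Granting this, every component of $\loc(V_x)$ equals $\overline{q_x(U_x')}$ for some component $U_x'$ of $U_x$, and by the generic finiteness of $q_x|_{U_x'}$ its dimension is $\dim U_x'\ge\dim V_x+1\ge-K_X\cdot V-1$; this is (b).

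For (a), I would use upper semicontinuity of fibre dimension for $q\colon U\to X$: for every $x\in\loc(V)$ one has $\dim q^{-1}(x)\ge\dim U-\dim\loc(V)=\dim V+1-\dim\loc(V)$, and since a curve of $V$ through $x$ is not contracted the map $q^{-1}(x)\to V_x$ is finite, so $\dim V_x\ge\dim V+1-\dim\loc(V)$. Combining this with $\dim\loc(V_x)=\dim V_x+1$ from the previous paragraph and with $\dim V\ge\dim X-K_X\cdot V-3$ yields
\[
\dim\loc(V)+\dim\loc(V_x)\ \ge\ \dim V+2\ \ge\ \dim X-K_X\cdot V-1,
\]
which is (a).

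The genuinely delicate point, and the one I expect to be the main obstacle, is the bend-and-break step: one must be certain that properness of the components of $V_x$ really rules out positive-dimensional fibres of $q_x$, and this rests on the distinction between $\ratcurves$ and $\textrm{Chow}(X)$. A proper component of $V_x$ maps to a complete, hence closed, subset of $\textrm{Chow}(X)$ whose members are all irreducible and generically reduced, so the reducible-or-non-reduced cycle produced by bend-and-break cannot lie there; that contradiction is what powers both parts of the proposition. The rest is bookkeeping with dimensions.
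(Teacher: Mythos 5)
The paper gives no proof of this proposition: it is quoted verbatim from Koll\'ar \cite[Corollary IV.2.6]{Kob}. Your argument is correct and is essentially the standard proof from that reference: the Euler-characteristic bounds $\chi(f^*T_X)=\dim X-K_X\cdot V$ and $\chi(f^*T_X(-1))=-K_X\cdot V$ on the Hom-scheme, the passage to $\ratcurves$ by dividing out $\mathrm{PGL}_2$ (resp.\ the $2$-dimensional stabiliser of a marked point), and bend-and-break applied to the proper components of $V_x$ to force generic finiteness of the evaluation map $U_x\to X$ are exactly the ingredients used there, and your dimension bookkeeping for (a) and (b) is accurate.
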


\begin{definition}
Let $V^1, \dots, V^k$ be families of rational curves on $X$ and $Z \subset X$.
We define $\loc(V^1)_Z$ to be the set of points $x \in X$ such that there exists
a curve $C$ among those parametrized by $V^1$ with
$C \cap Z \not = \emptyset$ and $x \in C$. We inductively define
$\loc(V^1, \dots, V^k)_Z := \loc(V^k)_{\loc(V^1, \dots,V^{k-1})_Z}$.
\end{definition}

{\bf Notation}: 
If $\Gamma$ is a $1$-cycle, then we will denote by $[\Gamma]$ its numerical equivalence class
in $\cycl(X)$; if $V$ is a family of rational curves, we will denote by $[V]$ the numerical equivalence class
of any curve among those parametrized by $V$.
A proper family will always be denoted by a calligraphic letter.\\
If $Z \subset X$, we will denote by $\cycx{Z} \subseteq \cycl(X)$ the vector subspace
generated by numerical classes of curves of $X$ contained in $Z$; moreover, we will denote 
by  $\conx{Z} \subseteq \cone(X)$
the subcone generated by numerical classes of curves of $X$ contained in $Z$. We will
denote by $\langle \dots \rangle$ the linear span.\par
\medskip
We will use some properties of $\loc(V)_Z$, summarized in the following

\begin{lemma}\label{locvf}\cite[Section 5]{ACO}, \cite[Proof of Lemma 1.4.5]{BSW}
Let $Z \subset X$ be an irreducible closed subset and $\V$ an unsplit family.
Then every curve contained in $\loc(\V)_Z$ is numerically equivalent to
a linear combination with rational coefficients
   $$\lambda C_Z + \mu C_{\V},$$
where $C_Z$ is a curve in $Z$, $C_{\V}$ is a curve among those parametrized by $\V$ and $\lambda \ge 0$.
If moreover curves contained in $Z$ are numerically independent from curves in $\V$ and 
$Z \cap \loc(\V) \not= \emptyset$ then
$$\dim \loc(\V)_Z \ge \dim Z -K_X \cdot \V - 1.$$
\end{lemma}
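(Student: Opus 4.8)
The plan is to carry out the standard universal‑family argument. Write $p\colon\Univ(\V)\to\V$ for the universal family of the unsplit (hence proper) family $\V$ and $e\colon\Univ(\V)\to X$ for the evaluation morphism; $p$ is a $\pu$‑bundle, so $\Univ(\V)$ is proper. The subscheme $\V_Z:=p(e^{-1}(Z))\subseteq\V$ parametrizing the curves of $\V$ that meet $Z$ is then closed, and $\loc(\V)_Z=e(p^{-1}(\V_Z))$.

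\emph{First assertion.} Fix an irreducible curve $C\subseteq\loc(\V)_Z$ and an irreducible curve $\widetilde C\subseteq p^{-1}(\V_Z)$ dominating $C$ via $e$. If $p$ contracts $\widetilde C$, then $C$ is itself a curve of $\V$ and there is nothing to prove. Otherwise $B:=p(\widetilde C)$ is a curve in $\V_Z$; since every member of $B$ meets $Z$, the closed set $p^{-1}(B)\cap e^{-1}(Z)$ dominates $B$, and after replacing $B$ by the normalization of a suitable component of this set I obtain a smooth curve $B'$, a finite morphism $B'\to B$, a section $\sigma$ of the $\pu$‑bundle $S:=\Univ(\V)\times_{\V}B'\to B'$ with $e(\sigma)\subseteq Z$, and a lift $\widetilde C'\subseteq S$ of $C$. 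On the $\pu$‑bundle $S$ over the smooth curve $B'$ every irreducible curve $D$ satisfies $[D]=(D\cdot f)\,\sigma+b\,f$ for some $b\in\mathbb Z$, where $f$ is a fibre and $D\cdot f\ge 0$; applying this to $\widetilde C'$ and pushing forward by $e$ — which sends $f$ to a curve of $\V$, and $\sigma$ either to a curve $C_Z\subseteq Z$ or to a point — yields $[C]=\lambda[C_Z]+\mu[C_\V]$ with $\lambda\ge 0$ and $\lambda,\mu\in\mathbb Q$.

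\emph{Dimension estimate.} The first assertion applied to a single point $\{x\}$ says that every curve contained in $\loc(\V_x)$ is a rational multiple of $[C_\V]$. Consider $I:=\{(z,x)\in Z\times X: x\in\loc(\V_z)\}$; it is closed in $Z\times X$, being the image under $(y_1,y_2)\mapsto(e(y_2),e(y_1))$ of the closed subset $\{(y_1,y_2)\in\Univ(\V)\times_{\V}\Univ(\V): e(y_2)\in Z\}$. Its projection to $Z$ is surjective, with every fibre of dimension $\ge\kd\V-1$ by Proposition \ref{iowifam}(b) (each $\V_z$ is proper, $\V$ being unsplit), so $\dim I\ge\dim Z+\kd\V-1$, while the projection of $I$ to $X$ has image $\loc(\V)_Z$. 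If $\dim\loc(\V)_Z<\dim Z+\kd\V-1$, a general fibre of $I\to\loc(\V)_Z$ would be positive‑dimensional and, via its isomorphic projection to $Z$, would yield an irreducible curve $B\subseteq Z$ contained in $\loc(\V_x)$ for one fixed $x\in X$; then $[B]$ is a multiple of $[C_\V]$, so $[C_\V]\in\cycx{Z}$, contradicting the numerical independence of the curves of $Z$ from those of $\V$. Hence the estimate holds.

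The main obstacle is the bookkeeping in the first assertion: producing, after a finite base change, an \emph{honest} section of the ruled surface that is mapped into $Z$, and keeping track of the degrees of $e$ on $\sigma$, $f$ and $\widetilde C'$ so that the resulting coefficients are rational with $\lambda\ge 0$. Once the reduction to a $\pu$‑bundle over a smooth curve is made, the remaining steps are formal. (One uses in the estimate that $I\to Z$ is surjective, i.e. $Z\subseteq\loc(\V)$, which holds in the situations where the lemma is applied; in general the argument only gives $\dim\loc(\V)_Z\ge\dim(Z\cap\loc(\V))+\kd\V-1$.)
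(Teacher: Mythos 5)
The paper gives no proof of this lemma, deferring entirely to \cite{ACO} and \cite{BSW}; your argument is precisely the standard one behind those citations --- the ruled-surface reduction over a normalized multisection mapping into $Z$ for the numerical decomposition with $\lambda\ge 0$, and the incidence-variety dimension count via Proposition (\ref{iowifam})(b) together with the numerical-independence hypothesis for the estimate --- and it is correct. Your closing caveat is also well taken: the argument literally yields $\dim\loc(\V)_Z\ge\dim(Z\cap\loc(\V))-K_X\cdot\V-1$, so the bound as printed with $\dim Z$ is only available when $\dim(Z\cap\loc(\V))=\dim Z$.
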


We will also need the following lemma, which is based on \cite[Proposition II.4.19]{Kob}:

\begin{lemma}\label{numeq}
Let $Y \subset X$ be a closed subset, $\V$ a Chow family of rational $1$-cycles. Then every curve
contained in $\loc(\V)_Y$ is numerically equivalent to a linear combination with rational
coefficients of a curve contained in $Y$ and of irreducible components of cycles parametrized
by $\V$ which meet $Y$.
\end{lemma}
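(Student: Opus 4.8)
The plan is to lift a curve $C\subseteq\loc(\V)_Y$ to the universal family of cycles over $\V$, reduce the base to a curve, and then combine the rigidity of numerical classes on a ruled surface over a curve with the connectedness of the cycles parametrized by $\V$.

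First I would fix notation: let $p\colon\mathcal Z\to\V$ be the universal family of $1$-cycles, $e\colon\mathcal Z\to X$ the evaluation morphism, and $\V_Y:=p(e^{-1}(Y))\subseteq\V$ the closed subset parametrizing cycles meeting $Y$, so that $\loc(\V)_Y=e(p^{-1}(\V_Y))$. Given an irreducible curve $C\subseteq\loc(\V)_Y$, I pick an irreducible curve $\widetilde C\subseteq p^{-1}(\V_Y)$ dominating $C$ and set $B:=p(\widetilde C)\subseteq\V_Y$. If $B$ is a point, then $C$ lies in a single connected cycle $\Gamma$ of $\V$ meeting $Y$, hence in one irreducible component of $\Gamma$, and $[C]$ is a rational multiple of that class; so I may assume $B$ is an irreducible curve. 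After replacing $B$ by a finite cover --- which changes neither the hypothesis nor the conclusion --- I may assume that the components of a general cycle parametrized by $B$ are not permuted by monodromy; then a general cycle $\Gamma_b$ has components $\Gamma_b^{(1)},\dots,\Gamma_b^{(r)}$ which fit together into irreducible surfaces $\mathcal W_1,\dots,\mathcal W_r$ in the universal family over $B$, with $\mathcal W_j\to B$ having general fibre $\Gamma_b^{(j)}$, an irreducible rational curve; moreover every $\Gamma_b$ still meets $Y$.

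The key input I would use is the following elementary fact, which in this setting is essentially \cite[Proposition II.4.19]{Kob}: if $\sigma\colon S\to B$ is a smooth projective surface fibred over a curve with rational general fibre, then $\ker\bigl(\sigma_*\colon\cycl(S)\to\cycl(B)\bigr)$ is spanned by the classes of the irreducible components of the fibres of $\sigma$; in particular any two multisections of $\sigma$ agree, up to a rational multiple, modulo fibre components. Applying this on a resolution of each $\mathcal W_j$ and pushing forward by $e$, I would argue as follows. The curve $\widetilde C$ (pulled back, and replaced by an irreducible component dominating $B$) lies in some $\mathcal W_{j_0}$. Since $B$ is irreducible and every $\Gamma_b$ is connected and meets $Y$, one of the surfaces --- say $\mathcal W_{j^*}$ --- has the property that its general fibre meets $Y$, and in the dual graph of $\Gamma_b$, which is constant over a dense open subset of $B$, there is a path $j_0,j_1,\dots,j_m=j^*$; the loci $\mathcal W_{j_k}\cap\mathcal W_{j_{k+1}}$ and $e^{-1}(Y)\cap\mathcal W_{j^*}$ each contain a curve dominating $B$. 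Propagating the fact above along this chain --- from $\widetilde C$, through a multisection of $\mathcal W_{j_0}$ contained in $\mathcal W_{j_1}$, and so on down to a multisection of $\mathcal W_{j^*}$ contained in $e^{-1}(Y)$, whose image $C_Y$ lies in $Y$ --- changes the numerical class at each step only by a rational combination of classes of components of cycles parametrized by $B$, each of which meets $Y$. After $m$ steps this yields $[C]=\lambda[C_Y]+\sum_i\mu_i[\Gamma_i]$ with $\lambda,\mu_i\in\mathbb Q$ and each $\Gamma_i$ an irreducible component of a cycle of $\V$ meeting $Y$ (and if that last multisection is contracted by $e$, then $\lambda=0$ and the statement still holds).

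I expect the main obstacle to be bookkeeping rather than geometry: because the cycles of $\V$ may be reducible and non-reduced, the universal family over $B$ is a union of ruled-type surfaces rather than a single one, and the numerical estimate must be carried along the dual graph of a general cycle; the finite base change and the choice of the path $j_0,\dots,j_m$ are precisely what make this rigorous, while the only genuinely substantive point --- the description of $\ker\sigma_*$ for a fibred rational surface --- is \cite[Proposition II.4.19]{Kob}.
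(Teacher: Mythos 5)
Your proof is correct and follows precisely the route the paper intends: the paper offers no proof of Lemma (\ref{numeq}) beyond the remark that it ``is based on'' \cite[Proposition II.4.19]{Kob}, and your argument (reduce to an irreducible curve $B$ in the Chow family after handling the degenerate case where the base is a point, produce a multisection lying over $Y$, and invoke the rigidity of numerical classes on a family of rational $1$-cycles over a curve) is exactly the standard way of filling that citation in. The only comment worth making is that the decomposition into the surfaces $\mathcal W_j$ and the propagation along the dual graph of the general cycle is avoidable: Koll\'ar's II.4.19 already applies to the whole family $p^{-1}(B)\to B$ with its connected but possibly reducible fibres together with a single multisection $Z\subseteq e^{-1}(Y)\cap p^{-1}(B)$ dominating $B$, which is how the authors themselves use it in the proof of Theorem (\ref{thmrho2}), so your finite base change and chain of intermediate multisections amount to re-deriving the reducible-fibre case of that proposition from the irreducible one.
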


\subsection{Contractions and fibrations}\par
\medskip

\begin{definition}
Let $X$ be a manifold such that $K_X$ is not nef.
By the  Cone Theorem
 the closure of the cone of effective 1-cycles into
the $\mathbb R$-vector space of 1-cycles modulo numerical equivalence,
$\overline {\cone}(X) \subset\cycl(X)$, is polyhedral in the part contained in
the set $\{z\in \cycl(X) :K_X \cdot z<0\}$.
An {\em extremal face}  is a face of this polyhedral part, and an extremal face of dimension one
is called an {\em extremal ray}.\\
To an extremal face $\sigma \subset \cone(X)$ is associated a morphism with connected fibers 
$\f_\sigma:X \to Z$ onto a normal variety, morphism which contracts the curves 
whose numerical class is in $\sigma$; $\f_\sigma$ is called an {\em extremal contraction}
or a {\em Fano-Mori contraction}, while a Cartier divisor $H$ such that 
$H = \f_\sigma^*A$ for an ample divisor $A$ on $Z$
is called a {\em supporting divisor} of the map $\f_\sigma$ (or of the face $\sigma$).
We denote with $\Exc(\f_{\sigma}):=\{x\in X|\dim \f_\sigma ^{-1}(\f_{\sigma}(x))>0\}$ the {\em exceptional locus} of $\f_\sigma$.\\
An extremal contraction associated to an extremal ray is called an 
{\em elementary contraction};
an elementary contraction is said to be of {\em fiber type} if $\dim X>\dim Z$, otherwise the contraction is {\em birational}. Moreover, if the codimension of the exceptional locus of an elementary birational contraction is equal to one, then the contraction is called {\em divisorial}; otherwise it is called {\em small}.
\end{definition}

Proposition (\ref{iowifam}), in case $\mathcal{V}$ is the unsplit family of deformations of a minimal extremal
rational curve, gives the {\em fiber locus inequality}:

\begin{proposition} \cite{Io,Wicon}\label{fiberlocus} Let $\f$ be a Fano-Mori contraction
of $X$ and let $E = \Exc(\f)$ be its exceptional locus;
let $S$ be an irreducible component of a (non trivial) fiber of $\f$. Then
$$\dim E + \dim S \geq \dim X + l -1,$$
where $l =  \min \{ -K_X \cdot C\ |\  C \textrm{~is a rational curve in~} S\}.$
If $\f$ is the contraction of a ray $R$, then $l(R):=l$ is called the {\em length of the ray}.
\end{proposition}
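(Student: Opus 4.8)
The plan is to deduce the inequality from Proposition \ref{iowifam} applied to a suitable family of rational curves lying inside the given fiber. Write $z:=\f(S)$, so that $S$ is a positive‑dimensional irreducible component of $\f^{-1}(z)$; by Mori theory the fibers of a Fano-Mori contraction are covered by rational curves, all of which are contracted by $\f$. I would first fix a point $x\in S$ general enough to avoid the (proper, closed) intersection of $S$ with the remaining components of $\f^{-1}(z)$, and then choose, among all rational curves through $x$ whose numerical class lies in the face contracted by $\f$, one curve $C$ of minimal anticanonical degree; let $V$ be its family of deformations. Since $C$ is irreducible, contracted, and passes through the general point $x$ of the component $S$, it must be contained in $S$; hence $-K_X\cdot V=-K_X\cdot C\ge l$.

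Next I would verify the hypothesis of Proposition \ref{iowifam}, namely that $V_x$ is proper, so that a fortiori every component of $V_x$ is proper. If it were not, a reducible cycle through $x$ would appear as a limit of members of $V_x$; writing its class as $\sum m_i[C_i]$ with the $C_i$ rational and at least two summands, extremality of the contracted face forces each $[C_i]$ into that face, where $-K_X$ is strictly positive, so each $-K_X\cdot C_i$ is a positive integer strictly smaller than $-K_X\cdot C$. As some $C_i$ passes through $x$, this contradicts the minimality in the choice of $C$. Granting this, Proposition \ref{iowifam}(a) gives
\[
\dim\loc(V)+\dim\loc(V_x)\ \ge\ \dim X-K_X\cdot V-1\ \ge\ \dim X+l-1 .
\]
It then remains to bound the two loci from above by $\dim E$ and $\dim S$. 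Every curve parametrized by $V$ has the numerical class of $C$, hence is contracted by $\f$ and therefore contained in a positive‑dimensional fiber, so $\loc(V)\subseteq E$. Every curve parametrized by $V_x$ is an irreducible contracted curve through $x$, hence contained in $\f^{-1}(z)$; being irreducible and passing through $x$, which lies on no other component of $\f^{-1}(z)$, it is contained in $S$, so $\loc(V_x)\subseteq S$. Substituting into the displayed inequality yields $\dim E+\dim S\ge \dim X+l-1$, and when $\sigma$ is a ray the equality $l(R)=l$ is then just the definition.

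The step I expect to require the most care is the localization inside the prescribed component $S$: one must be sure that a minimal contracted rational curve through a general point of $S$ really stays inside $S$, and likewise that $\loc(V_x)$ does not spread into neighbouring components of $\f^{-1}(z)$. Both are secured by the genericity of $x$ together with the irreducibility of the curves parametrized by $V$; the remaining ingredients — existence of contracted rational curves through points of $S$, properness of $V_x$ forced by minimality, and the final dimension count — are routine once this is in place.
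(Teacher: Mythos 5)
Your argument is correct and follows exactly the route the paper indicates: the paper gives no proof of this proposition (it is quoted from Ionescu and Wi\'sniewski), but remarks that it follows from Proposition (\ref{iowifam}) applied to the unsplit family of deformations of a minimal extremal rational curve, which is precisely what you carry out. Your additional care about properness of $V_x$ via minimality and about localizing $\loc(V_x)$ inside the chosen component $S$ is the standard way to make that remark rigorous.
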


The next theorem, which will be frequently used, gives us conditions to ensure that a birational contraction is a smooth blow-up: 

\begin{theorem}(Cf. \cite[Theorem 4.1 (iii)]{AW1})\label{blowup} Let $\f:X\rightarrow Z$ be an extremal contraction of a smooth variety $X$. Assume that $\f$ is birational and supported by $K_X+rH$, with $H$ a $\f$-ample line bundle on $X$, and that, for each non trivial fiber $F$ of $\f$ we have $\dim F=r$.
Then $Z$ is smooth and $\f$ is a blow down of a smooth divisor $E\subset X$ to a smooth subvariety of $Z$.
\end{theorem}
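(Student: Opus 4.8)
\emph{Approach.} The plan is to read off the numerical type of $\f$ from the fiber locus inequality, to recognize a general fiber as a projective space via the Cho--Miyaoka--Shepherd-Barron theorem, and finally to globalize this so as to identify $E$ with the exceptional divisor of a smooth blow-up. For the first step, let $R$ be the extremal ray contracted by $\f$ and write $K_X+rH=\f^{*}A$ with $A$ ample on $Z$. Every curve $C$ contracted by $\f$ satisfies $-K_X\cdot C=r\,(H\cdot C)$; as $H$ is $\f$-ample and Cartier, $H\cdot C\ge 1$, so the invariant $l$ of Proposition \ref{fiberlocus} satisfies $l\ge r$. If $F$ is a non trivial fiber and $S\subseteq F$ an irreducible component, the fiber locus inequality gives
\[
\dim\Exc(\f)+\dim S\ \ge\ \dim X+l-1\ \ge\ \dim X+r-1,
\]
while $\f$ birational forces $\dim\Exc(\f)\le\dim X-1$ and the hypothesis forces $\dim S\le\dim F=r$. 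Hence all these are equalities: $\f$ is divisorial, $E:=\Exc(\f)$ is a prime divisor (irreducible, $\f$ being elementary) with $E\cdot R<0$, $l=r$, the minimal contracted rational curves have $H$-degree $1$, and every non trivial fiber has pure dimension $r$. Putting $T:=\f(E)$, the map $\f|_E\colon E\to T$ is equidimensional with $r$-dimensional fibers, so $T$ is irreducible of codimension $r+1$ in $Z$.

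\emph{A general fiber is $\proj^{r}$.} Let $F$ be a general fiber of $\f$; by generic smoothness $F$ is smooth, and, being connected, irreducible of dimension $r$. Adjunction for the divisor $F\subset X$ gives $-K_F=(-K_X-E)|_F=rH|_F+(-E)|_F$. If $C\subset F$ is a minimal rational curve through a general point of $F$, then $C$ is contracted, so $[C]\in R$, whence $H\cdot C=1$ and $E\cdot C\le -1$, giving $-K_F\cdot C\ge r+1$; bend-and-break gives $-K_F\cdot C\le\dim F+1=r+1$. Therefore $-K_F\cdot C=r+1$ and $E\cdot C=-1$, so the Cho--Miyaoka--Shepherd-Barron characterization of projective space yields $F\cong\proj^{r}$; and since $\pic(\proj^{r})=\mathbb Z$, necessarily $H|_F\cong\Ol(1)$ and $\Ol_E(-E)|_F\cong\Ol(1)$.

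\emph{Globalization.} Over the smooth locus $T^{\circ}\subseteq T$ the morphism $\f|_E$ is equidimensional with Cohen--Macaulay source ($E$ is a divisor in the smooth $X$), hence flat; since the general fiber is $\proj^{r}$, constancy of the Hilbert polynomial shows that each fiber over $T^{\circ}$ is a closed subscheme of projective space with the Hilbert polynomial of $\proj^{r}$, hence is itself a $\proj^{r}$ on which $\Ol_E(-E)$ restricts to $\Ol(1)$. The crux of the proof, and the step I expect to be hardest, is to extend this to the fibers over $\operatorname{Sing}T$: using that $\Ol_E(-E)$ is $\f|_E$-ample and restricts to $\Ol(1)$ on the general fiber, one must show (by a Hilbert-polynomial argument, or by analyzing $\f_{*}\Ol_E(-E)$ and its base-change behavior) that \emph{every} fiber of $\f|_E$ is a $\proj^{r}$ and that $E$ is smooth along it. Granting this, $\f|_E\colon E\to T$ is a $\proj^{r}$-bundle, so $E$ is smooth and $T$, as the base of a flat morphism from a smooth variety, is smooth of codimension $r+1$ in $Z$; and $\Ol_E(E)$ restricts to $\Ol(-1)$ on the $\proj^{r}$-fibers, so $E\cong\mathbb P(N^{*}_{T/Z})$. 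By the standard characterization of a blow-up through its exceptional divisor this identifies $\f$ with the blow-down of the smooth divisor $E$ to the smooth subvariety $T$ of a smooth variety $Z$, which is the assertion.
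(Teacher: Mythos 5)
This theorem is not proved in the paper at all: it is quoted verbatim (``Cf.'') from Andreatta--Wi\'sniewski \cite[Theorem 4.1 (iii)]{AW1}, so your attempt has to be measured against their argument rather than against anything in the text. Your first step is correct and standard: combining the fiber locus inequality of Proposition (\ref{fiberlocus}) with $l\ge r$, $\dim\Exc(\f)\le\dim X-1$ and $\dim S\le r$ forces equality everywhere, so $\f$ is divisorial with irreducible exceptional divisor $E$, $E\cdot R<0$, the minimal contracted curves have $H$-degree one, and $\f|_E\colon E\to T$ is equidimensional with $r$-dimensional fibers. In the second step there are two local inaccuracies: a non-trivial fiber $F$ is \emph{not} a divisor in $X$, so the formula $-K_F=(-K_X-E)|_F$ is not ``adjunction for the divisor $F$'' but adjunction for $E\subset X$ followed by restriction to a fiber of $\f|_E$ (where $\f^*K_T$ dies), and it presupposes that $\f|_E$ is smooth along $F$; moreover, smoothness of a general non-trivial fiber does not follow from generic smoothness of $\f$, since \emph{every} non-trivial fiber lies over the proper closed subset $T\subset Z$ --- you would need generic smoothness of $\f|_E$, and $E$ is not yet known to be smooth at this stage.

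The decisive gap, however, is the one you flag and then ``grant'': that \emph{every} non-trivial fiber is a $\proj^r$ on which $H$ and $\Ol_X(-E)$ restrict to $\Ol(1)$, and that $E$ is smooth along it. The flatness-plus-Hilbert-polynomial route cannot close it: miracle flatness requires the base to be regular, and smoothness of $T$ (indeed of $Z$) is part of what must be proved; and even where $\f|_E$ is flat, a fiber with the Hilbert polynomial of $(\proj^r,\Ol(1))$ is a linear $\proj^r$ only once one knows it is embedded by a degree-one very ample line bundle --- again part of the conclusion, since a flat degeneration of $\proj^r$ need not be $\proj^r$. Uniform control of all fibers is precisely the content of \cite[Theorem 4.1 (iii)]{AW1}: the proof there rests on the relative base-point-freeness of $H$ established in that paper and on an inductive slicing argument with general members of $|H|$ near a fixed fiber, which identifies each non-trivial fiber, together with the restrictions of $H$ and $\Ol_X(-E)$, as $(\proj^r,\Ol(1),\Ol(1))$ and computes its normal bundle; smoothness of $E$, $T$ and $Z$ and the blow-up structure then follow (via a Fujiki--Nakano type criterion combined with the normality of $Z$ to identify the blow-down with $\f$). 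So your proposal is a reasonable outline of the statement's shape, but the heart of the proof --- the part for which the paper cites \cite{AW1} --- is missing.
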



\section{Preliminaries}


\begin{definition} Let $(X,H)$ be a polarized manifold of dimension $n$; if there exists a dominating family $V$ of rational curves such that $H \cdot V=3$ and through two general points of $X$ there is a curve
parametrized by $V$  we will say that $X$ is {\em Rationally Cubic Connected} - RCC for short - with respect to $V$.
\end{definition}

In \cite[Proposition 4.3]{OP} we proved the following result concerning RCC manifolds of Picard number one:

\begin{proposition}\label{pic1}
Let $(X,H)$ be a RCC-manifold with respect to a family $V$; then 
\begin{enumerate}
\item there exists $x \in X$ such that $V_x$ is proper if and only if $(X,H) \simeq (\proj^n,\Ol(3))$;
\item there exists $x \in X$ such that $V_x$ is quasi-unsplit if and only if $X$ is a Fano manifold of Picard number one and index $r(X) \ge \frac{n+1}{3}$ with fundamental divisor $H$.
\end{enumerate}
\end{proposition}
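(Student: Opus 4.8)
The plan is to prove both equivalences by disposing of the easy ``if'' directions first and then concentrating on the two ``only if'' parts. The ``if'' directions are immediate: on $(\proj^n,\Ol(3))$ a curve of $H$-degree $3$ is a line, lines form an unsplit family joining any two points, so $V$ must be that family and each $V_x\cong\proj^{n-1}$ is proper; and if $\rho_X=1$ then $\cycl(X)$ is spanned by $[V]$, so every component of every reducible cycle is trivially numerically proportional to $[V]$, i.e.\ $V_x$ is quasi-unsplit for every $x$. For the ``only if'' directions I would first record two dimensional facts. Since $V$ dominates, its general member is a free curve birational onto its image, so $\dim V=\kd V+n-3$; and since two general points of $X$ lie on a curve of $V$, a standard incidence computation gives $\dim V\ge 2n-2$, hence $\kd V\ge n+1$.

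For part (1), assume $V_x$ is proper for some $x$. First I would check that $\loc(V_x)=X$: it is dense because $x$ is joined to a general point of $X$ by a curve of $V$, and it is closed because it is the image in $X$ of the universal family over the proper scheme $V_x$. Then Proposition~\ref{iowifam}(a), applied with $\loc(V)=\loc(V_x)=X$, gives $\kd V\le n+1$, so $\kd V=n+1$ by the inequality just recorded. At this point $V_x$ is a proper family with $\kd V=n+1$ and $\loc(V_x)=X$, and I would invoke the Cho--Miyaoka--Shepherd-Barron characterization of projective space to conclude that $X\simeq\proj^n$ and that $V$ is the family of lines; since $H\cdot V=3$ and $\pic(\proj^n)\simeq\mathbb{Z}$ this forces $H\simeq\Ol(3)$.

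For part (2), assume $V_x$ is quasi-unsplit for some $x$; by part (1) I may assume in addition that $V_x$ is not proper, so $(X,H)\not\simeq(\proj^n,\Ol(3))$. By Lemma~\ref{numeq} every curve contained in $\loc(V_x)$ is numerically proportional to $[V]$, and as in part (1) one has $\loc(V_x)=X$; therefore $\cycl(X)=\onespan{V}$, i.e.\ $\rho_X=1$. As $V$ is dominating, $\kd V\ge 2>0$, so $-K_X$ is positive on the unique ray of $\cone(X)$ and is thus ample: $X$ is a Fano manifold of Picard number one. Writing $-K_X=rH'$ with $H'$ the ample generator of $\pic(X)$ and $H=bH'$ ($b\ge1$), from $b\,(H'\cdot V)=H\cdot V=3$ one gets $H'\cdot V\le 3$, and combining with $r\,(H'\cdot V)=\kd V\ge n+1$ yields $r\ge (n+1)/3$. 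Finally $b\,(H'\cdot V)=3$ leaves only $(b,H'\cdot V)=(1,3)$ or $(3,1)$, and the second is impossible since a family of curves of $H'$-degree one cannot degenerate, hence would be unsplit and make $V_x$ proper. So $b=1$ and $H$ is the fundamental divisor of $X$.

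The step I expect to be the genuine obstacle is the implication $\kd V=n+1\Rightarrow X\simeq\proj^n$ in part (1): everything preceding it is bookkeeping with dimensions, whereas this is the substantive geometric input, and it is the only place where the properness of $V_x$ (rather than mere quasi-unsplitness) enters essentially; making it fully rigorous also requires some care in deducing that the family has the properness one needs through a sufficiently general point. This is exactly what separates case~(1) from case~(2).
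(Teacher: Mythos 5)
A preliminary remark: this paper does not actually prove Proposition (\ref{pic1}) --- it quotes it from \cite[Proposition 4.3]{OP} --- so there is no in-paper proof to compare against. Judged on its own, your argument follows the route one would expect: the dimension count $\dim V=-K_X\cdot V+n-3\ge 2n-2$ giving $-K_X\cdot V\ge n+1$, Proposition (\ref{iowifam}) for the reverse inequality, the Cho--Miyaoka--Shepherd-Barron/Kebekus theorem \cite{Kepn} as the substantive input in part (1) (the same citation the paper uses in Section \ref{secgen} to identify $X'$ with $\proj^n$), and Lemma (\ref{numeq}) in part (2). Your decision to treat case (2) as excluding the proper case of (1) is also correct and necessary: read literally, $(\proj^n,\Ol(3))$ satisfies the left-hand side of (2) but $H$ is not its fundamental divisor, and the paper's own use of the proposition (e.g.\ in the proof of Proposition (\ref{surfaces})) treats the two alternatives as exclusive.

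The one step that does not work as written is your justification that $\loc(V_x)$ is dense: ``$x$ is joined to a general point of $X$ by a curve of $V$'' is only guaranteed when $x$ itself is general, whereas the hypothesis merely asserts the \emph{existence} of some $x$ with the stated property. In part (1) this is harmless: Proposition (\ref{iowifam})(a) already yields $-K_X\cdot V\le n+1$ from the trivial bounds $\dim\loc(V)\le n$ and $\dim\loc(V_x)\le n$, and then part (b) gives $\dim\loc(V_x)\ge -K_X\cdot V-1=n$, so $\loc(V_x)=X$ by properness of $V_x$, with no generality of $x$ required. In part (2) the gap is more consequential, since to conclude $\cycl(X)=\onespan{V}$ from Lemma (\ref{numeq}) you need $\loc(\V_x)=X$ for that particular, possibly special, point $x$. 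The fix is standard: the set of pairs $(y,z)\in X\times X$ lying on the support of a common connected cycle parametrized by $\V$ is closed (the universal cycle over the projective scheme $\V$ is proper over $\V$ and over $X\times X$) and contains the dense set of pairs of general points, hence equals $X\times X$; therefore $\loc(\V_x)=X$ for \emph{every} $x$. With this patch, and with the care you already flag about the precise hypotheses (general versus arbitrary base point) under which \cite[Theorem 1.1]{Kepn} applies, the proof goes through.
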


As a consequence, if the Picard number of $X$ is greater than one, through
a general point there exists  at least one reducible cycle parametrized by the Chow family $\V$ whose components are not all numerically proportional to $V$.\\
Since $H \cdot V=3$, such a cycle in $\V$ can have two or three irreducible rational components;
we will call a component of $H$-degree one a \textit{line} and a component of $H$-degree two a \textit{conic}.\\
Throughout the present paper, after dealing with the case of surfaces in the next subsection, we will assume that $X$ does not admit a covering family of lines; this assumption yields that every dominating family of conics is locally unsplit.\\
Moreover, from \cite[Formula (1), Corollary 5.6 and Formula (4)]{OP} we have that $V$ is generically unsplit and so
\begin{equation}\label{n+1}
-K_X \cdot V= n+1.
\end{equation}

\subsection{RCC surfaces}

We will now give a complete classification of RCC-surfaces; as a consequence we will
see that Theorem (\ref{main}) holds for $n=2$.

\begin{proposition}\label{surfaces} Let $(S,H)$ be a polarized surface, which is RCC with respect to a family 
of rational curves $V$. Then $(S,H)$ and $V$ are one of the following:
\begin{enumerate}
\item $(\proj^2, \Ol_{\proj^2}(3))$, the family of lines in $\pd$;
\item $(\proj^2, \Ol_{\proj^2}(1))$,  the family of rational plane cubics;
\item $(\mathbb Q^2, \Ol_{\mathbb Q^2}(1,2)$ (or $\Ol_{\mathbb Q^2}(2,1)))$, the family of curves of type $(1,1)$;
\item $(\mathbb Q^2, \Ol_{\mathbb Q^2}(1,1))$, the family of curves of type $(2,1)$ (or $(1,2)$);
\item $(\mathbb F_1,C_0 +3f)$, the family of curves of type $C_0 +f$;
\item $(S_k,-K_{S_k})$ with $S_k$ a blow-up of $\pd$ in $k$ general points, with $k=1, \dots,8$, the family of strict transforms of lines in $\pd$.
\end{enumerate}

\end{proposition}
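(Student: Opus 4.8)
The plan is to determine the surface $S$ first. As $V$ is a dominating family of rational curves, $S$ is uniruled; as two general points of $S$ lie on a member of $V$, $S$ is rationally connected, hence rational, so $S$ is $\proj^2$, a Hirzebruch surface $\mathbb{F}_e$ (with $\mathbb{Q}^2\simeq\mathbb{F}_0$), or a blow-up of one of these at finitely many points. Since $V$ dominates, its general member is a free rational curve, hence $\textrm{Ratcurves}^n(S)$ is smooth along $V$ of dimension $d-1$, where $d:=-K_S\cdot V$; and since two general points of $S$ are joined by a member of $V$ we get $\dim V\ge 2$, that is $d\ge 3$. From here I would split the argument according to whether or not $S$ carries a covering family of lines.

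\emph{Case 1: $S$ has a covering family of lines $L$.} Being of $H$-degree $1$, $L$ is unsplit, so Proposition~\ref{iowifam} gives $\dim\loc(L_x)\ge -K_S\cdot L-1$ for general $x$, while freeness of $L$ gives $-K_S\cdot L\ge 2$. If $-K_S\cdot L=2$ then $L^2=0$ and the members of $L$ are the fibres of a morphism $p\colon S\to\pu$, all of whose fibres are irreducible (an $H$-line cannot break into curves of positive $H$-degree), so $p$ is a $\pu$-bundle and $S\simeq\mathbb{F}_e$. If $-K_S\cdot L\ge 3$ then $\loc(L_x)=S$, so $L$ joins two general points; it is then generically unsplit (there is no reducible $H$-degree-$1$ cycle, so bend and break through two general points cannot apply), which forces $\dim L=2$, $-K_S\cdot L=3$, $L^2=1$, hence $S\simeq\proj^2$, $H=\Ol_{\proj^2}(1)$ and $V$ is the family of rational plane cubics, i.e. case (2). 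In the remaining subcase $S\simeq\mathbb{F}_e$, writing $H=C_0+bf$ (up to relabelling when $e=0$) and $[V]=\alpha C_0+\beta f$, and imposing ampleness of $H$, the condition $H\cdot V=3$, that $\alpha C_0+\beta f$ be represented by a dominating family of rational curves, and the RCC condition, one is left precisely with cases (3), (4), (5).

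\emph{Case 2: $S$ has no covering family of lines.} Then $d=3$: if $d\ge 4$ then $\dim V=d-1\ge 3$, so through two general points of $S$ there is a positive-dimensional subfamily of $V$, which by bend and break \cite{Kob} degenerates into a reducible rational $1$-cycle through two general points, a component of which is an $H$-line through a general point — contradicting the hypothesis. So $d=3=n+1$ and Proposition~\ref{pic1} applies: if some $V_x$ is proper then $(S,H)\simeq(\proj^2,\Ol_{\proj^2}(3))$, case (1); otherwise $V_x$ is generically unsplit with $\dim V=2$, and inspecting the extremal rays of $S$ (a fibre-type ray gives a ruling by lines or $S\simeq\proj^2$; a $(-1)$-curve has $-K_S$-degree $1$) shows that $-K_S$ is ample, i.e. $S$ is a del Pezzo surface; combining $H\cdot V=-K_S\cdot V=3$ with positivity of $H$ on the $(-1)$-curves yields $H=-K_S$, and the only del Pezzo surfaces not covered by lines carrying such a $V$ are $\proj^2$ and the blow-ups $S_k$ of $\proj^2$ at $1\le k\le 8$ general points, with $V$ the strict transforms of general lines, case (6). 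In particular this proves Theorem~\ref{main} for $n=2$.

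The technical core is the Hirzebruch enumeration in Case 1: one must eliminate the numerical types $\alpha C_0+\beta f$ that are formally compatible with the constraints but whose general member is actually reducible — typically because $C_0$ is a fixed component of the associated linear system — or which do not join two general points; and in Case 2 one has to take care that the bend-and-break step invokes the RCC hypothesis through \emph{two} fixed general points, since on $\proj^2$ a one-dimensional family of lines through a single point does not break.
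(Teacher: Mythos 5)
Your overall architecture (rationality, then a dichotomy on the existence of a covering family of lines, with a Hirzebruch enumeration on one side and a del Pezzo argument on the other) is plausible, but the step you yourself flag as ``the technical core'' --- the enumeration on $\mathbb F_e$ --- is only asserted, and its asserted outcome is not what your listed constraints actually produce. Writing $H=C_0+bf$ and (for the relevant types) $V=C_0+cf$, the conditions you impose (ampleness of $H$, $H\cdot V=3$, irreducibility of the general member, domination, joining two general points) amount to $b\ge e+1$, $c\ge e$, $b+c=e+3$, and besides (3), (4), (5) these admit $(\mathbb F_1,\,C_0+2f)$ with $V$ the irreducible members of $|C_0+2f|$ --- i.e.\ $(Bl_p(\pd),\,2\ell-E)$ with $V$ the strict transforms of conics through $p$: smooth rational curves of $H$-degree $(2\ell-E)^2=3$, a net of which passes through $p$ and any two further general points --- and $(\mathbb F_2,\,C_0+3f)$ with $V$ the irreducible members of $|C_0+2f|$ (pullbacks of the smooth conic sections of the quadric cone). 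Neither is eliminated by reducibility of the general member or by failure to join two general points, so the claim that one is left \emph{precisely} with (3), (4), (5) cannot be reached along the route you indicate; the enumeration does not close on the stated list.

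For comparison, the paper reaches the $\mathbb F_e$ case only after reducing to $V$ generically unsplit (the non-generically-unsplit possibilities are handled by citing \cite[Proposition 5.5]{OP}); generic unsplitness gives $-K_S\cdot V=3$, whence the genus formula forces $B^2=1$ for $B$ in $V$, and the arithmetic on $\mathbb F_e$ then yields only $(\mathbb F_1,C_0+3f)$. Your argument never has the input $-K_S\cdot V=3$ in the covered-by-lines branch (nor could it: $-K_S\cdot V=4$ in case (3)), so you lack the lever that makes the paper's computation terminate. Note moreover that the two pairs above have $-K_S\cdot V=5$ and $4$, so they belong to the non-generically-unsplit branch, where the paper's own step ``through every pair of points there is a cycle of three lines, hence $S$ admits two dominating families of lines'' also fails (the degenerations $C_0+f+f'$ involve the rigid curve $C_0$ and only one ruling); this is a point that has to be confronted by explicit enumeration rather than asserted. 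A secondary gap: in your Case 2 the deduction ``$d\ge4$ implies an $H$-line through a general point'' overlooks the degeneration into a conic through both general points plus a line lying in a fixed divisor, which is precisely the configuration the paper's divisors of $V$-lines are designed to control.
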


\begin{proof}
By the first part of Proposition (\ref{pic1}) if $V_x$ is proper for some $x \in S$ then $(S,H) \simeq (\pd,\Ol_\pd(3))$, while, by the second part, if $V_x$ is quasi-unsplit for some $x \in S$
then $(S,H) \simeq (\pd,\Ol_\pd(1))$.\par
\smallskip
We can assume from now on that the Picard number of $S$ is at least two.\\
If $V$ is not generically unsplit then either 
$(S,H)=(\mathbb Q^2, \Ol_{\mathbb Q^2}(1,2))$ (and curves of $V$ are curves of type $(1,1)$) or through every pair of points of $S$ there is a reducible cycle in $V$ consisting of three lines by \cite[Proposition 5.5]{OP}; in this last case $S$ admits two dominating families of lines, hence $(S,H) \simeq (\mathbb Q^2, \Ol(1,1))$ and $V$ is, up to exchange the rulings,  the family of curves of type $(2,1)$.\par
\smallskip
We are thus left with the case of $V$ generically unsplit; by formula (\ref{n+1}\!\!) we then
have $-K_S \cdot V=3$.\par
\smallskip
We consider first the case in which $S$ admits a covering family of lines $\li$; recalling that $\rho_S \ge 2$ we have that $S$ is a ruled surface $\mathbb F_e=\proj_\pu(\Ol(-e) \oplus \Ol)$, and the lines are the fibers of the projection to $\pu$.\\
 Denote by $C_0$ a minimal section and by $f$ a fiber;
since the fibers are lines with respect to $H$ we can write $H \equiv C_0 +hf$. Let $B \equiv aC_0+bf$ be a curve parametrized by $V$; by the genus formula we get 
$$1= B^2= a(2b -ae),$$ hence $a=1$ and $e=2b-1$. Since $B$ is an effective curve, this is possible just for $b=e=1$.
Now, from $H \cdot B=3$, we get $h=3$.\par
\smallskip
Finally we treat the case of a surface not covered by lines.\\
Consider the set $\mathcal B'=\{(\El^i, C^i)\}$ of pairs of families $(\El^i, C^i)$ such that  through a general point $x \in S$ there  is a reducible cycle $\ell +  \gamma$ belonging to $\V$, with $\ell$ and $\gamma$ parametrized respectively by $\El^i$ and $C^i$. \\
The families of conics are locally unsplit and dominating, hence $-K_S \cdot C^i=2$ for every $i$; this implies that $-K_S \cdot \li^i=1$. The numerical classes of $\li^i$ and $C^i$ are a system of generators for $N_1(S)$ (by Lemma (\ref{numeq})), and $K_S +H$ is trivial on each of them.\\  
It follows that  $H$ and $-K_S$ are numerically equivalent;  being $S$ rational they are also linearly equivalent. In particular $-K_S$ is ample, and $S$ is a del Pezzo surface. We are now assuming that the Picard number of $S$ is greater than one, and that  $-K_S \cdot V=3$, so $S$ is not a projective space or a quadric.
\end{proof}


\section{Divisors of $V$-lines}


Having settled the case of surfaces in Proposition (\ref{surfaces}), we will assume from now on that $n=\dim X \ge 3$.\par
\smallskip
Consider the set $\mathcal B'=\{(\El^i, C^i)\}$ of pairs of families $(\El^i, C^i)$ such that  through a general point $x \in X$ there  is a reducible cycle $\ell +  \gamma$, parametrized by $\V$, with $\ell$ and $\gamma$ parametrized respectively by $\El^i$ and $C^i$. \par
\medskip
Let us consider two pairs $(\li^i,C^i)$ and $(\li^j,C^j)$ such that $[\li^i] \not = [\li^j]$.\\
Since no family of lines is covering, by the generality of $x$ all the families of conics are dominating and locally unsplit;
therefore $\dim \loc(C^i)_x \cap \loc(C^j)_x = 0$ for every $i \not = j$;
it follows that 
\begin{equation}\label{limc}
 -K_X \cdot (C^i+C^j) \le \dim \loc(C^i)_x + \loc(C^j)_x +2 \le n +2, 
\end{equation}
so, recalling that $-K_X \cdot (C^i+\li^i) = \kd V = n+1$, we also have
\begin{equation}\label{liml}
 -K_X \cdot (\li^i+\li^j) \ge n.
\end{equation}

Let now $\mathcal B=\{(\El^i, C^i)\}_{i=1}^k$ be a maximal set of pairs in $\mathcal B'$ 
with the property that $[V],  [\El^1], \dots,  [\El^k]$ are numerically independent. Denote by $\Pi_i$ the two-dimensional vector subspace of
$\cycl(X)$ spanned by $[V]$ and $[\li^i]$.
By Lemma (\ref{numeq}) we have 
$$\cycl(X) = \langle [V], [\El^1],[C^1], \dots, [\El^k],[C^k] \rangle= \langle [V], [\El^1], [\El^2], \dots,[\El^k] \rangle,$$
 hence the Picard number of $X$ is $k+1$.\par
 \medskip
For every $i= 1, \dots, k$ denote by $E_i$ the set $\loc(C^i, \El^i)_x$;
by Lemma (\ref{locvf}) and by Proposition (\ref{iowifam}) it has dimension $\dim E_i \ge n-1$; since 
 $E_i \subset \loc(\El^i)$, the inclusion is an equality and $E_i$ is an irreducible divisor.\\
We will call the divisor $E_i$ the {\em divisor of $V$-lines} associated to the pair $(\li^i,C^i)$.
We will distinguish the divisors of $V$-lines in the following way:
\begin{enumerate}
\item $E_i$ is of the first kind if $-K_X \cdot \El^i = n-1$;
\item $E_i$ is of the second kind if $-K_X \cdot \El^i = 1$;
\item $E_i$ is of the third kind in all the other cases.
\end{enumerate}

The next lemma gives the description of the relative space of cycles of the divisors of $V$-lines.
 
\begin{lemma}\label{vdiv} 
Let $E_i$ be a divisor of $V$-lines, associated to a pair $(\li^i,C^i)$. If
$E_i$ is of the first kind then $\cycx{E_i}=\langle [\li^i] \rangle$. In the other cases
$\cycx{E_i}=\langle [C^i], [\El^i] \rangle$ and  $[\El^i]$ is extremal in $\conx{E_i}$.
\end{lemma}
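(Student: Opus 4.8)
The statement has two parts: first, when $E_i$ is of the first kind we must show $\cycx{E_i}=\langle[\li^i]\rangle$; second, in the remaining cases we must show $\cycx{E_i}=\langle[C^i],[\El^i]\rangle$ with $[\El^i]$ extremal in $\conx{E_i}$. The natural tool throughout is Lemma~(\ref{numeq}): since $E_i=\loc(C^i,\El^i)_x=\loc(\El^i)_{\loc(C^i)_x}$, every curve in $E_i$ is a rational combination of a curve in $\loc(C^i)_x$ and of components of cycles of the Chow family $\El^i$; and $\loc(C^i)_x$ in turn is covered by curves of $C^i$, which are unsplit (the family of conics is locally unsplit by the standing assumption). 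Iterating, every curve in $E_i$ lies in $\langle[C^i],[\El^i]\rangle$, which already gives the inclusion ``$\subseteq$'' in all cases. So the real content is the reverse inclusion in the first-kind case (i.e. that $[C^i]$ is \emph{not} in $\cycx{E_i}$, equivalently $\cycx{E_i}$ is one-dimensional) and the extremality statement in the other cases.

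For the first-kind case I would argue by a dimension count. Here $-K_X\cdot\El^i=n-1$, so by Proposition~(\ref{iowifam})(b) every component of $\loc(\El^i_y)$ has dimension $\ge n-2$ for $y$ general in $\loc(\El^i)=E_i$; combined with $\dim E_i=n-1$ this says the $\El^i$-curves through a general point of $E_i$ sweep out a divisor inside $E_i$, so $E_i$ is ``almost'' covered in one numerical direction. More to the point: if $\cycx{E_i}$ were $2$-dimensional, then $\loc(C^i)_x\subset E_i$ would be a positive-dimensional subvariety along which a curve numerically independent from $\El^i$ lives; but $\loc(C^i)_x$ is covered by conics of $H$-degree $2$, so $\dim\loc(C^i)_x\ge 1$, and applying Lemma~(\ref{locvf}) to the unsplit family $\El^i$ with $Z=\loc(C^i)_x$ forces $\dim\loc(\El^i)_{\loc(C^i)_x}\ge \dim\loc(C^i)_x+(n-1)-1\ge n-1$ — consistent — but then one uses that $-K_X\cdot(\li^i+C^i)=n+1$ together with the inequality $\dim\loc(C^i)_x\le -K_X\cdot C^i-1=1$, so $\dim\loc(C^i)_x=1$ exactly, and the divisor $E_i$ is fibered by the $1$-dimensional $\loc(C^i)_x$'s moved along $\El^i$. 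The point is that in this situation one can show $E_i$ is actually covered by a single numerical class: the curves of $\El^i$ passing through a general point of $E_i$ have locus of dimension $n-2$, hence they, together with a single conic, already fill up $E_i$, and the conic class is forced to be a (rational, nonnegative) combination — here is where I'd push to conclude $[C^i]\in\langle[\li^i]\rangle$, contradicting numerical independence of $[V]$ from... — no: rather, I would show directly via Proposition~(\ref{fiberlocus})-type fiber-locus reasoning applied to the contraction of the extremal ray $\mathbb R_{\ge0}[\li^i]$ that $E_i$ is the exceptional locus of a contraction with all fibers of dimension $n-2$ spanned by $\li^i$-curves, so $\cycx{E_i}=\langle[\li^i]\rangle$.

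For the remaining kinds (second and third), the inclusion already gives $\cycx{E_i}\subseteq\langle[C^i],[\El^i]\rangle$; for equality I note that $E_i$ genuinely contains both a conic of $C^i$ and a line of $\El^i$ through the general point (by construction $E_i=\loc(C^i,\El^i)_x$), and these two classes are numerically independent — otherwise $\cycx{E_i}$ would be $1$-dimensional and one is back in a degenerate situation handled as above — so $\cycx{E_i}$ is exactly $2$-dimensional. Finally, extremality of $[\El^i]$ in the $2$-dimensional cone $\conx{E_i}$: since $\El^i$ is an unsplit family, $\mathbb R_{\ge0}[\El^i]$ is an extremal ray of $\cone(X)$ (the lines do not break further as $H\cdot\li^i=1$), hence a fortiori it is extremal in the subcone $\conx{E_i}\subseteq\cone(X)$. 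I'd spell this out by the standard argument: if $[\El^i]=\alpha[\Gamma_1]+\beta[\Gamma_2]$ with $\Gamma_j$ effective curves in $E_i$ and $\alpha,\beta>0$, intersect with a supporting divisor of the ray $\mathbb R_{\ge0}[\li^i]$ to see both $\Gamma_j$ are proportional to $[\li^i]$.

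**Main obstacle.** The delicate point is the first-kind case: ruling out that $\cycx{E_i}$ is $2$-dimensional there, equivalently showing that the conic direction ``collapses'' inside $E_i$. The difficulty is that a priori $\loc(C^i)_x$ is a positive-dimensional subvariety of $E_i$ carrying a class independent of $[\li^i]$; the resolution must use that $-K_X\cdot\El^i=n-1$ is as large as it can be, so the $\El^i$-curves through a general point of $E_i$ already have an $(n-2)$-dimensional locus, leaving no room for an extra independent direction — made precise via the fiber-locus inequality for the contraction of $\mathbb R_{\ge0}[\li^i]$, which exhibits $E_i$ as an exceptional divisor whose fibers are spanned by $\El^i$-curves. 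I expect the bulk of the work, and the place where the earlier formulas $-K_X\cdot V=n+1$ and the estimates (\ref{limc})--(\ref{liml}) get used, to be concentrated in this dimension bookkeeping.
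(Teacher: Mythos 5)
Your inclusion $\cycx{E_i}\subseteq\langle [C^i],[\El^i]\rangle$ via Lemma (\ref{numeq}) is fine, but both of the points you single out as the real content are handled incorrectly. In the first--kind case, the route you finally commit to --- contract the extremal ray $\mathbb R_{\ge 0}[\li^i]$ and read $\cycx{E_i}$ off the fibers --- is circular: that $[\li^i]$ spans an extremal ray of $\cone(X)$ (hence admits a contraction, a supporting divisor, etc.) is only proved later, in Proposition (\ref{key}), and that proof takes the present lemma (the extremality of $[\li^i]$ in $\conx{E_i}$) as its input. The argument you need is much shorter and you circle around it without landing on it: since $X$ carries no covering family of lines, $\li^i$ is not dominating, so $\dim\loc(\li^i)\le n-1$; Proposition (\ref{iowifam})(a) with $-K_X\cdot\li^i=n-1$ gives $\dim\loc(\li^i)+\dim\loc(\li^i_x)\ge 2n-2$ for any $x\in\loc(\li^i)$, hence $\dim\loc(\li^i_x)\ge n-1$, and since $E_i=\loc(\li^i)$ is irreducible of dimension $n-1$ this forces $E_i=\loc(\li^i)_x$ for a \emph{single} point $x$. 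Now Lemma (\ref{locvf}) with $Z=\{x\}$ (so that there are no curves $C_Z$) says every curve in $E_i$ is proportional to $[\li^i]$. No contraction is involved, and no analysis of $\loc(C^i)_x$ is needed.

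For the second and third kinds your justification of extremality --- ``since $\El^i$ is an unsplit family, $\mathbb R_{\ge0}[\El^i]$ is an extremal ray of $\cone(X)$'' --- is simply false: unsplitness is properness of the parameter space and carries no information about the position of $[\El^i]$ in the cone of curves; and the ``standard argument'' you sketch again presupposes a supporting divisor for a ray whose extremality is not yet available. The correct source of extremality is the sign condition built into Lemma (\ref{locvf}): writing $E_i=\loc(\El^i)_{\loc(C^i)_x}$, every effective curve in $E_i$ is numerically $\lambda C_Z+\mu[\li^i]$ with $\lambda\ge 0$ and $C_Z$ an effective curve in $\loc(C^i)_x$, hence a nonnegative multiple of $[C^i]$; so $\conx{E_i}$ lies in the closed half--plane $\{\lambda\ge 0\}$ of the plane $\langle[C^i],[\li^i]\rangle$ with $[\li^i]$ on its boundary line, which is exactly the claimed extremality. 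A smaller slip: the bound $\dim\loc(C^i)_x\le -K_X\cdot C^i-1$ you invoke is an upper bound valid for locally unsplit dominating families, not what Proposition (\ref{iowifam}) states (which is the opposite inequality); it happens to be true here but is not needed anywhere in the proof.
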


\begin{proof}
If $E_i$ is of the first kind, then $E_i= \loc(\El^i)_x$ for any $x \in \loc(\El^i)$, while if  $E_i$ is either of the second or of the third
kind then $E_i=\loc(C^i, \El^i)_x$ for a general $x \in X$. The statement now follows from Lemma (\ref{locvf}).
\end{proof}

As a consequence, we can prove that these divisors are disjoint:

\begin{lemma}\label{disj}
Let $E_i$ for $i = 1 , \dots, k$ be divisors of $V$-lines associated to pairs in $\mathcal B$.
Then the $E_i$'s are pairwise disjoint. Moreover, if $E_k$ is of the third kind then
$E_i \cdot \li^k = E_i \cdot C^k = 0$ for every $i \not = k$.
\end{lemma}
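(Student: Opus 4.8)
The plan is to exploit the numerical rigidity of the divisors $E_i$ together with the dimension estimates for loci of chains of rational curves. The key observation is that curves inside different $E_i$'s lie in different two-dimensional subspaces $\Pi_i$ of $\cycl(X)$ (by Lemma \ref{vdiv}), so if $E_i \cap E_j \neq \emptyset$ we would be able to build, through a point of the intersection, a chain of $V$-lines and conics whose locus is too big to fit inside $X$.

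First I would suppose $E_i \cap E_j \neq \emptyset$ for some $i \neq j$ and pick a point $z$ in the intersection. Using Lemma \ref{vdiv} and Lemma \ref{locvf}, the locus $\loc(\El^j)_z$ (if $E_j$ is of the first kind) or $\loc(C^j, \El^j)_z$ (otherwise) has dimension $\geq n-1$ and is contained in $E_j$; on the other hand it meets $E_i$, since $z \in E_i$. Then I would propagate: starting from this locus, which already carries the class $[\El^j]$ (and $[C^j]$), I would attach the family $\El^i$ and the conic family $C^i$. Because $[\El^i],[C^i]$ are numerically independent from the classes supported on $E_j$ (they span $\Pi_i \neq \Pi_j$, and more precisely $[\El^i]$ is extremal in $\conx{E_i}$ so cannot be a combination of $[\El^j],[C^j]$ unless those classes coincide, which they don't by maximality of $\mathcal B$), Lemma \ref{locvf} gives
$$\dim \loc(\El^i)_{\loc(C^j,\El^j)_z} \geq (n-1) + (-K_X\cdot \El^i) - 1,$$
and similarly after also adding $C^i$ we gain $-K_X \cdot C^i = 2$ more, provided the successive loci keep meeting $\loc(\El^i)$, $\loc(C^i)$ (which they do, by construction, since we build the chain through $z \in E_i$ and the families $\El^i,C^i$ dominate / have the required dimension by Proposition \ref{iowifam}). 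Summing up, using $-K_X \cdot (\li^i + C^i) = n+1$, one gets a locus of dimension $\geq (n-1) + (n+1) - 2 = 2n-2$ inside $X$, forcing $2n-2 \leq n$, i.e. $n \leq 2$, contradicting $n \geq 3$. (One has to be a little careful with the first-kind case, where the conic family is absent from $E_j$: there $\dim \loc(\El^j)_z \geq n-1$ already and attaching $\El^i$ and $C^i$ gives $\geq (n-1) + (n-1) + 2 - 2 = 2n-2$ again; and in the first-kind case for $E_i$ one attaches only $\El^i$ with $-K_X \cdot \El^i = n-1$, giving $\geq (n-1)+(n-1)-1 = 2n-3 > n$ for $n \geq 3$ as well—actually $2n-3 \le n$ forces $n \le 3$, so a slightly sharper bookkeeping or the inequalities \eqref{limc}--\eqref{liml} handle the borderline.)

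For the second assertion, suppose $E_k$ is of the third kind and fix $i \neq k$; I want $E_i \cdot \li^k = E_i \cdot C^k = 0$. Since the $E_i$'s are disjoint, no curve parametrized by $\El^k$ or $C^k$ can be contained in $E_i$; hence a curve $\li^k$ (resp. $C^k$) either is disjoint from $E_i$ or meets it in finitely many points with $E_i \cdot \li^k > 0$ (resp. $> 0$). In the latter case, through such an intersection point $z \in E_i$ I would again run the chain construction: I have a curve of class $[\li^k]$ (or $[C^k]$) through $z$, and attaching $C^i$ and $\El^i$ produces $\loc(C^i,\El^i)_{(\text{curve through }z)}$ of dimension $\geq 1 + (-K_X\cdot(\li^i+C^i)) - 1 = 1 + (n+1) - 1 = n+1$ by Lemma \ref{locvf}, since $[\li^i],[C^i] \in \Pi_i$ are numerically independent from $[\li^k]$ (this uses $[\li^k] \notin \Pi_i$: if $[\li^k]$ were in $\Pi_i$ then, by the third-kind hypothesis on $E_k$ and the fact that $[\li^k]$ is extremal in $\conx{E_k}$, comparing with the generators of $\Pi_i$ forces a contradiction with the numerical independence of $[V],[\li^1],\dots,[\li^k]$). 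A locus of dimension $n+1$ inside the $n$-dimensional $X$ is absurd, so $E_i \cdot \li^k = 0$; the argument for $C^k$ is identical.

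\textbf{Main obstacle.} The delicate point is the verification, in each branch of the chain construction, that (i) the classes being attached are genuinely numerically independent from the classes already accumulated on the relevant $E_j$—this is where I must use that distinct divisors of $V$-lines live in distinct subspaces $\Pi$, together with the extremality statements in Lemma \ref{vdiv} and the maximality/numerical-independence in the definition of $\mathcal B$—and (ii) that the successive loci continue to intersect $\loc(\El^i)$ and $\loc(C^i)$, so that Lemma \ref{locvf} (and Proposition \ref{iowifam}) may be applied at each step; this is guaranteed because we anchor the whole chain at the point $z$ lying in $E_i$ and the families $\El^i, C^i$ are the ones whose composite locus \emph{is} $E_i$. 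Once these two bookkeeping points are secured, the dimension count closes the argument cleanly, with the inequalities \eqref{limc} and \eqref{liml} available to absorb any borderline arithmetic in low dimension.
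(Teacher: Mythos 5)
Your overall strategy---anchor a chain of loci at a point of $E_i \cap E_j$ and derive a contradiction from a dimension count---is in the right spirit, but as written it has three genuine gaps. First, and most seriously, Lemma (\ref{locvf}) applies only to \emph{unsplit} families, whereas the conic families $C^i$ are only known to be locally unsplit (the paper itself works with reducible degenerations of these conics, e.g.\ in Proposition (\ref{second})); so every step of the form ``attach $C^i$ and gain $-K_X\cdot C^i-1$'' is unjustified, and with it both of your key counts (the $2n-2$ in the first part and the $n+1$ in the second---which, incidentally, should read $1+(n+1)-2=n$, since each attached family costs $1$). Second, the starting estimate $\dim\loc(C^j,\El^j)_z\ge n-1$ is only available for a \emph{general} $z$: local unsplitness gives properness of $C^j_z$ only at a general point, and a point $z\in E_i\cap E_j$ need not even lie in $\loc(C^j)$. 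Third, the numerical-independence hypothesis of Lemma (\ref{locvf}) fails at your second attachment: since $[V]\in\cycx{E_j}$ and $[C^i]=[V]-[\li^i]$, the class $[C^i]$ already lies in the span of the classes accumulated after attaching $\El^i$ to a subset of $E_j$.

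The paper's proof is much shorter and avoids all of this. Since $n\ge3$, if $E_i\cap E_j\neq\emptyset$ then the intersection has dimension $\ge n-2\ge1$ and hence contains a curve, whose class lies in $\cycx{E_i}\cap\cycx{E_j}$; by Lemma (\ref{vdiv}) and the numerical independence of $[V],[\li^1],\dots,[\li^k]$ this immediately separates any first-kind divisor from all the others, and (\ref{limc}) shows that if a second-kind divisor exists then every other divisor is of the first kind. For two third-kind divisors this numerical argument alone is inconclusive, because $\cycx{E_i}\cap\cycx{E_k}=\langle[V]\rangle\neq0$; the paper therefore proves the intersection-number assertion \emph{first} (if $E_i\cdot C^k>0$ then, $\loc(C^k)_x$ having dimension $\ge2$ and being swept by curves that all meet $E_i$, the divisor $E_i$ would contain a curve proportional to $[C^k]$, against Lemma (\ref{vdiv}); likewise for $\li^k$) and deduces disjointness from it. If you wish to keep your chain idea, restrict it to the unsplit line families: Lemma (\ref{locvf}) gives $\dim\loc(\El^i)_{E_j}\ge(n-1)+(-K_X\cdot\El^i)-1$, while $\loc(\El^i)_{E_j}\subseteq\loc(\El^i)=E_i$ has dimension $n-1$, forcing $-K_X\cdot\El^i\le1$; by symmetry the same holds for $\El^j$, and (\ref{liml}) then yields $n\le2$, a contradiction. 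Note that the containment in $E_i$, which you never invoke, is exactly what makes a single, line-only attachment suffice.
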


\begin{proof}
Since we are assuming that $n \ge 3$, if two divisors met, their intersection should be positive
dimensional. Therefore, by the description of the relative space of cycles $\cycx{E_i}$, it is clear that the divisor of the first kind are disjoint from any other divisor. Moreover, if a divisor of the second kind exists, then, by  equation (\ref{limc}\!), all the other divisors are of the first kind.\\
We will now show that, if $E_k$ is of the third kind then $E_i \cdot \li^k = E_i \cdot C^k = 0$ for every $i \not = k$. This implies also that two divisors of the third kind are disjoint.\\
Both $\dim \loc(C^k)_x$ and $\dim \loc(\li^k)_x$ are greater than one, so if $E_i \cdot C^k >0$
(respectively $E_i \cdot \li^k >0$) then $E_i$ would contain a curve whose numerical class
is proportional to $[C^k]$ (resp. $[\li^k]$), a contradiction, since neither $[C^k]$ nor $[\li^k]$ is contained in $\cycx{E_i}$. 
\end{proof}

Theorem (\ref{main}) will follow if we prove that all the divisors of $V$-lines  have intersection number zero with $V$. In fact we have the following:

\begin{proposition}\label{key} Let $\mathcal F=\{E_1, \dots, E_k\}$ be a collection of pairwise disjoint  divisors of $V$-lines such that $E_i \cdot V=0$ for every $i=1, \dots, k$.\\
Then there exist a polarized manifold $(X',H')$  not covered by lines and a contraction $\f_\sigma:X \to X'$ expressing $X$ as a blow-up of $X'$ along $k$ disjoint centers $T_i$, with exceptional divisors $E_1, \dots, E_k$ and such that $H = \f_\sigma^*H' - \sum E_i$.\\ Moreover $(X',H')$ is RCC with respect to $V'$, the family of deformations of the image of a general curve parametrized by $V$.
\end{proposition}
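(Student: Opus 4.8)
The plan is to realize $X'$ as the target of a single extremal contraction of $X$ that simultaneously contracts all the divisors $E_i$, and then to check that the resulting polarization $H'$ makes $(X',H')$ an RCC-manifold of the desired type. First I would set $\sigma$ to be the extremal face of $\cone(X)$ generated by the classes $[\El^1],\dots,[\El^k]$. These classes are numerically independent by construction, and each lies in the $K_X$-negative part of the cone; the key point is to show that $\sigma$ is a \emph{face}, i.e. that a curve whose class lies in $\sigma$ must actually be a combination of the $\El^i$'s. This I would deduce from Lemma~(\ref{disj}) together with the hypothesis $E_i\cdot V=0$: since the $E_i$ are pairwise disjoint and have negative intersection with their own $\El^i$ (by the fiber locus inequality / adjunction on the divisorial contraction of each $\El^i$), the classes $[\El^i]$ span an extremal face and the associated contraction $\f_\sigma:X\to X'$ exists by the Cone Theorem. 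Because the $E_i$ are disjoint, $\f_\sigma$ is an isomorphism outside $\bigcup E_i$ and on each $E_i$ it restricts to the elementary contraction of $[\El^i]$.

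Next I would identify each elementary piece as a smooth blow-up via Theorem~(\ref{blowup}). On $E_i$, by Lemma~(\ref{vdiv}) the relative cone $\conx{E_i}$ has $[\El^i]$ as an extremal ray, and the length of that ray together with the fiber dimension is controlled by the classification of divisors of $V$-lines into the three kinds; in each case one checks that the contraction of $[\El^i]$ restricted to $E_i$ has all nontrivial fibers of the same dimension $r_i$, and that it is supported by $K_X + r_i H_i$ for a suitable $\f$-ample $H_i$. Theorem~(\ref{blowup}) then gives that $\f_\sigma$ blows down $E_i$ to a smooth center $T_i\subset X'$, with $X'$ smooth along $T_i$; since the $E_i$ are disjoint, the $T_i$ are disjoint and $X'$ is globally smooth. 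The formula $H\simeq\f_\sigma^*H'-\sum E_i$ I would get by computing $H\cdot\El^i$: each $\El^i$ is a $V$-line, so $H\cdot\El^i=1$, which forces the coefficient of each $E_i$ to be exactly $1$ when we write $H = \f_\sigma^* H' - \sum a_i E_i$ and intersect with a line in a fiber; that $H'$ so defined is an honest ample line bundle on $X'$ (not merely a $\mathbb{Q}$-divisor) follows because $H + \sum E_i$ is trivial on $\sigma$ and hence descends.

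Finally I would verify that $(X',H')$ is RCC with respect to $V'$. Take a general curve $B$ parametrized by $V$. By hypothesis $B\cap E_i=\emptyset$ for all $i$ (this is exactly $E_i\cdot V=0$ combined with the generality and irreducibility of $B$), so $\f_\sigma$ maps $B$ isomorphically onto its image $B'$, which avoids the centers $T_i$; thus $H'\cdot B' = H'\cdot\f_{\sigma *}B = (\f_\sigma^*H')\cdot B = (H+\sum E_i)\cdot B = H\cdot B = 3$. Since $\f_\sigma$ is birational and $B$ sweeps out a dense subset of $X$ joining two general points, the deformations $V'$ of $B'$ form a dominating family on $X'$ through which two general points of $X'$ are joined, so $(X',H')$ is RCC with respect to $V'$. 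That $X'$ is not covered by lines I would argue by contradiction: a covering family of $H'$-lines would pull back (away from the $T_i$, where it does not meet the centers for general members, since $\dim T_i$ is small) to a covering family of $H$-lines on $X$, contradicting the standing hypothesis.

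The main obstacle I anticipate is the first step — proving that $\langle[\El^1],\dots,[\El^k]\rangle$ really is an extremal face so that the simultaneous contraction $\f_\sigma$ exists and is a morphism (rather than only getting the individual divisorial contractions of each $[\El^i]$ separately, which a priori might not be compatible). The disjointness of the $E_i$ and the vanishing $E_i\cdot V = 0$ are precisely what should make the face argument go through, but assembling this cleanly, and simultaneously handling the three kinds of divisors of $V$-lines in the application of Theorem~(\ref{blowup}), is where the real work lies.
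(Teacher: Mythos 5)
Your overall architecture coincides with the paper's: contract the face spanned by the classes $[\li^i]$, identify the contraction as a disjoint union of smooth blow-ups via Theorem~(\ref{blowup}), descend $H+\sum E_i$ to an ample $H'$, and check the RCC property and the absence of lines by pulling curves back and forth; your last two steps are essentially the paper's. But the step you yourself flag as the main obstacle --- that each $R^i=\mathbb R_+[\li^i]$ is an extremal ray and that together they span a contractible face --- is left unresolved, and the route you sketch for it is circular: you propose to get $E_i\cdot\li^i<0$ ``by the fiber locus inequality / adjunction on the divisorial contraction of each $\li^i$'', but that contraction is precisely what you are trying to construct. The paper's argument avoids this: $E_i$ is effective and trivial on $[V]$, which lies in the interior of $\cone(X)$, so $E_i$ must be negative on some curve, necessarily contained in $E_i$; by Lemma~(\ref{vdiv}) the classes of curves in $E_i$ lie in $\twospan{[C^i]}{[\li^i]}$, and $E_i\cdot C^i\ge 0$ because $C^i$ is dominating, whence $E_i\cdot\li^i<0$. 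Extremality is then proved by exhibiting the explicit nef divisor $H_i=-(E_i\cdot\li^i)H+E_i$: any curve $B$ with $H_i\cdot B\le 0$ has $E_i\cdot B<0$, hence lies in $E_i$, and on $\conx{E_i}$, where $[\li^i]$ is extremal by Lemma~(\ref{vdiv}), $H_i$ is nonnegative and vanishes only on $R^i$. The same device applied to $H+\sum E_i$ produces the $k$-dimensional face $\sigma$ and simultaneously the descent of $H'$. You need to carry out this supporting-divisor construction rather than appeal to the Cone Theorem, which only contracts faces already known to be extremal.

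A second, smaller gap: to apply Theorem~(\ref{blowup}) you must show that every nontrivial fiber of the contraction of $R^i$ has dimension exactly $\kd\li^i$, and you propose to check this ``in each case'' according to the kind of $E_i$. No case division is needed: since $E_i=\loc(\li^i)_{\loc(C^i)_x}$ for a general $x$, every fiber $F_i$ meets $\loc(C^i)_x$, and the chain of inequalities $n\ge\dim F_i+\dim\loc(C^i)_x\ge \kd\li^i+\kd C^i-1=n$ forces equality for all fibers at once. This computation also yields $E_i\cdot\li^i=-1$, which your derivation of the coefficient $a_i=1$ in $H=\f_\sigma^*H'-\sum a_iE_i$ tacitly uses.
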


\begin{proof} The effective divisor $E_i$ cannot be trivial on the whole $\cone(X)$; since it vanishes on $[V]$, which lies in the interior of $\cone (X)$ it must be negative on some effective curve $B$. This curve is therefore contained in $E_i$.\\
By Lemma (\ref{vdiv}) the numerical class of $B$ is contained in the two-dimensional vector subspace of
$\cycl(X)$ spanned by $[C^i]$ and $[\li^i]$; since $E_i \cdot C^i \ge 0$, being $C^i$ a dominating family, and $E_i \cdot V=0$ we have $E_i \cdot \li^i <0$.\\
Consider the divisor $H_i=  -(E_i \cdot \li^i)H + E_i$; we will show that this divisor is nef 
and trivial only on $R^i= \mathbb R_+[\li^i]$. Assume that, for some curve $B$ we have $H_i \cdot B \le 0$; this implies $E_i \cdot B <0$, so $B \subset E_i$,  hence $[B] \subset \conx{E_i}$.\\
Recalling that $[\li^i]$ is extremal in $\conx{E_i}$, it is clear that for every curve whose numerical class is in  $\conx{E_i}\subset \Pi_i$ the intersection number with $H_i$ is nonnegative, and it is zero if and only if $[B] \in  R^i$; hence $H_i$ is nef and $R^i$ is an extremal ray of $\cone(X)$.\\
Denote by $\f_i$ the contraction associated to $R^i$. Since $\loc(\li^i) = E_i$ and $E_i \cdot R^i <0$ then $\Exc(\f_i)= E_i$; moreover, being $E_i = \loc(\li^i)_{\loc(C^i)_x}$ for a general $x \in X$ any fiber $F_i$  of  $\f_i$ meets $\loc(C^i)_x$, hence
$$n \ge \dim F_i + \dim \loc(C^i)_x \ge \kd \li^i \kd C^i -1 = n.$$
Equality must then hold. In particular for any fiber of $\f_i$ we have $\dim F_i = \kd \li^i$, thus $\f_i$ is a smooth blow-up by Theorem (\ref{blowup}). Notice that from this it follows that $E_i \cdot \li^i =-1$.\par
\smallskip
Consider now the divisor $ H + \sum E_i$; arguing as we did for $H_i$ we prove that it is nef and it vanishes only on curves
whose numerical class belong to one of the $R^i$'s. Therefore there is a $k$-dimensional
face $\sigma$ of $\cone(X)$  generated by the $R^i$'s and the associated contraction $\f_\sigma:X \to X'$
contracts exactly the curves whose numerical class belongs to $R^i$ for some $i$. Since the $E_i$'s are disjoint $\f_\sigma$ is the blow-up of $X'$ along smooth disjoint centers $T_i$'s.\par
\smallskip
Let $V'$ be a family of deformations of the image of a general curve
parametrized by $V$; clearly through two general points of $X'$ there is a curve parametrized by $V'$.  
The divisor  $H + \sum E_i$ is nef and  supports the face contracted by $\f_\sigma$, hence there exists an ample divisor $H'$ on $X'$ such that $\f_\sigma^*H'=H + \sum E_i$. From the projection formula we get $H' \cdot V'=3$.\\
Assume by contradiction that $X'$ is covered by lines, i.e. there exists a dominating family of rational curves $\li'$ of degree one with respect to $H'$. The family $\li$ of deformations of the strict transform of a general line $\ell'$ parametrized by $\li'$ will be a covering family of lines for $X$; in fact, being general, $\ell'$ is disjoint from $T_i$ for every $i$, hence $H \cdot \li=1$.
\end{proof}


\section{Manifolds of Picard number two}\label{secpic2}


In this section we are going to prove the first part of Theorem (\ref{main}) under the assumption that the Picard number of $X$ is two. This is the hardest case and represents a crucial step in the proof.\\
Let $\mathcal B'=\{(\El^i, C^i)\}$ be  as in the previous section; as we saw,
to each pair in $\mathcal B'$ is associated a divisor $E_i = \loc(\li^i)$; we need to show that one of them is the exceptional divisor of a smooth blow-up and does not meet a general curve of $V$.\par
\smallskip
We deal first with a particular case, namely the case in which a second kind divisor of $V$-lines exists.

\begin{proposition}\label{second}
Let $(X,H)$ be a RCC-manifold with respect to $V$, not covered by lines and of Picard number two. Assume that there exists a divisor of $V$-lines $E$ which is of the second kind. Then there exists a contraction $\f:X \to \proj^n$ expressing $X$ as a blow-up of $\proj^n$ along a codimension two linear subspace or along a codimension two smooth quadric. Moreover $H = \f^*\Ol_{\proj^n}(3)-E$ and $V$ is the family of deformations of the strict transform of a general line in $\proj^n$.
\end{proposition}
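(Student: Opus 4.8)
The plan is to exploit the fact that a second kind divisor $E$ has $-K_X\cdot\ell = 1$, where $\ell$ is the $V$-line, and hence (by Lemma \ref{vdiv}) $\cycx{E} = \langle [C],[\ell]\rangle$ with $[\ell]$ extremal in $\conx{E}$; since $\rho_X = 2$ this means $\cycl(X) = \langle [C],[\ell]\rangle$ and the cone $\cone(X)$ is spanned by $R_\ell := \mathbb{R}_+[\ell]$ and one other ray $R$. First I would run the argument of Proposition \ref{key} locally: show $R_\ell$ is an extremal ray whose contraction $\f_\ell$ is a smooth blow-up with exceptional divisor $E$, using the chain estimate $\dim F_\ell + \dim \loc(C)_x \ge -K_X\cdot\ell - K_X\cdot C - 1 = n$ together with Theorem \ref{blowup}. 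This gives $E\cdot\ell = -1$. The point that still needs proving — and that Proposition \ref{key} assumes as a hypothesis — is that $E\cdot V = 0$, equivalently that a general cubic of $V$ avoids $E$; once this is known, Proposition \ref{key} applies and produces $(X',H')$ of Picard number one, not covered by lines, RCC with respect to $V'$, with $H = \f^*H' - E$.

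To pin down $E\cdot V$, I would look at the other extremal ray $R$ of $\cone(X)$ and its contraction. The key numerical input is \eqref{limc}–\eqref{liml}: because a second kind divisor exists, any other divisor of $V$-lines must be of the first kind (this is noted in the proof of Lemma \ref{disj}); combined with $-K_X\cdot V = n+1$ and $-K_X\cdot\ell = 1$ one gets $-K_X\cdot C = n$, so the conic family $C$ has very large anticanonical degree and by the locally unsplit dimension estimate $\loc(C)_x$ has dimension $\ge n-1$, i.e. $C$ is "almost covering from each point". I would then argue that the conic $C$, having $-K_X\cdot C = n$ and lying in a Picard number two manifold with $[C]$ generating a ray, forces the other contraction to be either $\proj^n$ itself (fiber type, giving $X'\simeq\proj^n$) after blowing down $E$, and that in fact $(X',H') \simeq (\proj^n,\Ol_{\proj^n}(3))$ by Proposition \ref{pic1}(1)–(2): on $X'$ the image of $C$ together with deformations shows $V'_{x'}$ is quasi-unsplit or proper for general $x'$, and the only Picard number one RCC manifold with anticanonical degree $n+1$ relative to a degree-$3$ polarization and admitting such behaviour is $(\proj^n,\Ol(3))$. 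Having identified $X' = \proj^n$ and $H' = \Ol_{\proj^n}(3)$ with $V'$ the family of lines, the center $T = \f(E)$ is a smooth subvariety of $\proj^n$ through which, by the fiber locus inequality applied to $\f_\ell$ and the equality $\dim F_\ell = -K_X\cdot\ell = 1$, the exceptional fibers are lines; a standard analysis of which smooth subvarieties of $\proj^n$ have a blow-up with the required length-one exceptional fibers and with $H = \f^*\Ol(3) - E$ still ample and RCC forces $\codim T = 2$ and $T$ a linear subspace or a smooth quadric. Finally $V$ is the family of deformations of the strict transform of a general line: a general line in $\proj^n$ misses $T$, so its strict transform $B$ satisfies $H\cdot B = \f^*\Ol(3)\cdot B - E\cdot B = 3 - 0 = 3$ and joins two general points, and matching numerical classes identifies it with $V$.

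The main obstacle I expect is the step establishing $E\cdot V = 0$, i.e. that a general member of $V$ does not meet the second kind divisor $E$. A priori a cubic of $V$ could pass through $E$; what rules this out is the interplay between the huge locus $\loc(C)_x$ (dimension $\ge n-1$, from $-K_X\cdot C = n$) and the fact that $E = \loc(\ell)_{\loc(C)_x}$ for general $x$, together with $E\cdot\ell = -1 < 0$: a general $V$-curve through a general point is irreducible (its class generates the interior ray, not $R_\ell$ or $R$) so it cannot be contained in $E$, and an intersection-theoretic count using $H = \f_\ell^*H'' - E$-type relations on the blow-down forces the intersection with $E$ to be zero once one knows the image $V'$ has degree $3$ and the general image curve avoids the codimension-$\ge 2$ center. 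Carrying this out cleanly — essentially bootstrapping between $X$ and the partial blow-down $\f_\ell$ — is the technical heart of the argument; everything after the identification $X' = \proj^n$ is a finite case check on smooth subvarieties of projective space admitting length-one blow-ups.
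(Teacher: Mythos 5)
Your route is genuinely different from the paper's (which splits on whether $[C]$ spans an extremal ray: if it does, $X$ is exhibited as a $\proj^{n-1}$-bundle over $\proj^1$ via Kebekus' theorem on general fibers; if not, the degeneration of $C$ produces a length-$(n-1)$ ray blowing down to a point of a manifold that Miyaoka's criterion identifies as a quadric). But as written your plan has a genuine circularity at its first step. The argument of Proposition (\ref{key}) that makes $\mathbb R_+[\li]$ an extremal ray with a divisorial smooth blow-up contraction starts from the hypothesis $E\cdot V=0$: that is what forces $E\cdot\li<0$ and makes $-(E\cdot\li)H+E$ a nef supporting divisor. Lemma (\ref{vdiv}) only gives extremality of $[\li]$ in $\conx{E}$, not in $\cone(X)$, so you cannot ``run the argument of Proposition (\ref{key}) locally'' to get the contraction $\f_\ell$ and then use $H=\f_\ell^*H''-E$ to deduce $E\cdot V=0$ afterwards --- the conclusion you defer is exactly the input you need. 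Your eventual justification of $E\cdot V=0$ (``once one knows the image $V'$ has degree $3$ and the general image curve avoids the center'') assumes precisely what is to be proved; note that in the absence of a second-kind divisor the paper needs the entire universal-family computation of Theorem (\ref{thmrho2}) for this single point, so it cannot be waved through.

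A second gap is the identification $X'\simeq\proj^n$. Proposition (\ref{pic1})(1) requires $V'_{x'}$ to be \emph{proper}, which you do not establish; part (2) (quasi-unsplitness) only yields a Fano of Picard number one and index $\ge\frac{n+1}{3}$, which is exactly the other alternative in Theorem (\ref{main}) and does not single out $\proj^n$. To close this you would need something extra, e.g.\ a locally unsplit dominating family on $X'$ of anticanonical degree $\ge n+1$ (the images of the conics $C$ are a natural candidate, since $E\cdot C=1$ forces $-K_{X'}\cdot C'=n+1$) together with Cho--Miyaoka--Shepherd-Barron, but that step is absent. Also, your sentence asserting that ``$[C]$ generating a ray forces the other contraction to be $\proj^n$ itself'' conflates the two cases: when $[C]$ is extremal its contraction is a fiber-type map onto $\proj^1$, and when it is not extremal (the quadric case) the other ray is generated by a different class entirely. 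On the positive side, your endgame is sound and rather elegant: once $X=Bl_T(\proj^n)$ with $H=\f^*\Ol_{\proj^n}(3)-E$ is in hand, $\codim T=2$ follows from $-K_X\cdot\li=1$, ampleness of $H$ excludes trisecants, ``not covered by lines'' forces the secant variety of $T$ to be deficient, and Lemma (\ref{sec}) then gives exactly the linear space or the quadric --- this parallels the paper's own use of Lemma (\ref{sec}) in Theorem (\ref{th:15}). The two missing steps above, however, are the substance of the proposition.
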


\begin{proof} Let $(\li,C)$ be the pair in $\mathcal B'$ whose associated divisor of $V$-lines is $E$.\\ 
The two cases appearing in the statement differ by the position in $\cone(X)$ of the numerical class $[C]$.
Assume first  that  $[C]$ spans an extremal ray of $\cone(X)$. \\
The associated contraction $\psi:X \to B$ is then of fiber type with fibers of dimension $n-1$. In fact a fiber $F$ contains $\loc(C_x)$ for every $x \in F$ and, for a general $x \in X$ we have $\dim \loc(C_x) \ge n-1$ since $E$ is a second kind divisor; a general fiber of $\psi$ is a projective space by \cite[Theorem 3.6]{Kesing}; since the contraction is elementary, by standard arguments we get that  $X$ is a projective bundle over $B$; since $X$ is rationally connected we have that $B$ is rational, and thus $X=\proj_{\pu}(\E)$ with $\E = \oplus \Ol(a_i)$ and $0=a_0 \le a_1 \le \dots \le a_n$.\\
The family $C$ is the family of lines in the fibers of $\psi$; recalling that $H \cdot C=2$ and denoted by $\xi_\E$ the tautological line bundle of $\E$ we can write $H = 2 \xi_\E +\psi^*\Ol(b)$ for some $b$.
The ampleness of  $H$ yields $b\ge 1$; equality holds, since $H \cdot \li=1$.
Moreover from the last formula we get that a curve of $\li$ is a section corresponding to a surjection $\E \to \Ol$. The locus of curves in $\li$ is a divisor, hence we have $a_0=a_1= \dots =a_{n-1} =0$. Finally, from $-K_X \cdot \li=1$ we get $a_n=1$.
\par
\smallskip
Assume now that  $[C]$ is not extremal in $\cone(X)$.  Since for a general $x \in X$ we have $\dim \loc(C_x) =n-1$ by Proposition (\ref{iowifam}), in view of \cite[Theorem 2]{BCD} this implies that $C$ is not a quasi-unsplit family.\\
Let $\ell^1 + \ell^2$ be a reducible cycle in $\C$ whose components are not numerically proportional to $C$. For a general $x \in X$ we have seen that $\loc(C_x)$ is a divisor $D_x$; $D_x$ cannot contain curves numerically proportional to $\ell^i$, hence, if $D_x \cdot \ell^i \not = 0$ then, for every point $y$ in the locus of the corresponding family $\li^i$, we have $\dim \loc(\li^i_y) =1$.\\
Assume, up to exchange indexes, that $D_x \cdot \ell^1 \not = 0$; then, since $\li^1$ is not a covering family, by Proposition (\ref{iowifam}) we have $-K_X \cdot \li^1=1$, and thus $-K_X \cdot \li^2=n-1$.\\
By Lemma (\ref{locvf}) $\cycx{D_x}=\langle [C] \rangle$; in particular, being $C$ a dominating family ${D_x}_{|D_x}$ is nef. Since $D_x$ is effective, it follows that it is nef.\\
The nef divisor $D_x$ is trivial on $E_2=\loc(\li^2_x)$, hence $[\li^2]$ generates an extremal ray, which is birational and of length $n-1$, so it corresponds to the blow-up of a smooth point in a smooth $X'$.\\
Let $W$ be a minimal dominating family of rational curves for $X'$, and let $W^*$ be the family of deformations of the strict transform of a curve in $W$; we have that $[W^*]=[C]$, since $E_2$ is trivial on both and $X$ does not carry a covering family of lines. Therefore
\begin{equation}\label{km+1}
-K_{X'} \cdot W =-K_X \cdot W^* =-K_X \cdot C= n
\end{equation}
and $X'$ is a smooth quadric by \cite[Theorem 0.1, (3)]{Miqu}; in particular $X$ has another contraction whose exceptional locus is $E$, which is the blow-up of $\proj^n$ along a smooth quadric of codimension $2$.
\end{proof}

Now we will show that,  up to numerical equivalence, $\mathcal B'$ contains only one pair.

\begin{proposition}\label{proprho2} Let $(X,H)$ be a RCC-manifold with respect to $V$, not covered by lines, and of Picard number two. Then, up to numerical equivalence, $\mathcal B'$ contains only one pair $(\li,C)$.\end{proposition}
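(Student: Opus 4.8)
The plan is to argue by contradiction. Suppose $\mathcal B'$ contains two pairs $(\li^1,C^1)$ and $(\li^2,C^2)$ with $[\li^1]\neq[\li^2]$; recall that to each is attached the prime divisor $E_i=\loc(\li^i)$, that $H\cdot\li^i=1$, $H\cdot C^i=2$ and $[\li^i]+[C^i]=[V]$. Since $\li^1$ and $\li^2$ have the same $H$-degree $1$ but distinct classes, $[\li^1]$ and $[\li^2]$ are not proportional and hence form a basis of $\cycl(X)$ (here we use $\rho_X=2$). A first reduction disposes of the case in which some divisor of $V$-lines is of the second kind: by Proposition \ref{second} the manifold $X$ is then a blow-up of $\proj^n$ along a codimension-two linear space or smooth quadric, and on such an $X$ one verifies directly that through a general point the cubics of $V$ degenerate only in the single way ``fibre of the exceptional $\proj^{n-2}$-bundle plus strict transform of a secant line'', so that $\mathcal B'$ reduces to one pair, against the hypothesis. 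Hence we may assume that no divisor of $V$-lines is of the second kind.

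The core is then to prove that both $E_1$ and $E_2$ are of the \emph{first} kind, i.e.\ $\kd\li^i=n-1$. Inequalities (\ref{limc}) and (\ref{liml}) give $\kd\li^i\le n-1$, with equality exactly when $E_i$ is of the first kind; one must force this equality. I would do so by studying the relative cone $\conx{E_i}$ through Lemma \ref{vdiv}: if $E_i$ were of the third kind, then $\cycx{E_i}=\langle[C^i],[\li^i]\rangle=\cycl(X)$ with $[\li^i]$ extremal in $\conx{E_i}$, while $\li^i$ is an unsplit family of degree-one curves with $\loc(\li^i)=E_i$. Applying Proposition \ref{iowifam}(a) at a point of $E_i$ gives $\loc(\li^i)_y=E_i$ for all $y\in E_i$, so that any two points of $E_i$ lie on a common curve of $\li^i$; one then analyses the $K_X$-negative contraction of $\mathbb R_{\ge 0}[\li^i]$, separating the fibre-type case from the birational one and invoking the fibre-locus inequality together with Theorem \ref{blowup}, to conclude that $E_i$ must be disjoint from the other divisor $E_j$ — whereupon $E_j$, intersecting trivially all the curves of $E_i$, which span $\cycl(X)$, would be numerically trivial, absurd for a nonzero effective divisor. \emph{Excluding in this way, in Picard number two, the coexistence of a divisor of $V$-lines of the third kind with a second, numerically non-proportional one, is the step I expect to be the main obstacle, and is the genuinely hard core of the proof.}

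Granting that $E_1$ and $E_2$ are both of the first kind, the conclusion is immediate. Indeed $\kd\li^1=\kd\li^2=n-1$ and $H\cdot\li^1=H\cdot\li^2=1$, so the divisor $K_X+(n-1)H$ vanishes on the basis $\{[\li^1],[\li^2]\}$ of $\cycl(X)$ and is therefore numerically trivial, i.e.\ $-K_X\equiv(n-1)H$. But then $n+1=\kd V=(n-1)(H\cdot V)=3(n-1)$ by (\ref{n+1}), which forces $n=2$, contradicting $n=\dim X\ge 3$. This contradiction shows $[\li^1]=[\li^2]$; since $[C^i]=[V]-[\li^i]$, the two pairs then agree up to numerical equivalence, so $\mathcal B'$ contains a single pair up to numerical equivalence, as claimed.
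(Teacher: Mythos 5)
Your overall frame (argument by contradiction, disposal of the second-kind case via Proposition (\ref{second}), and the endgame) is sound, and the closing observation is genuinely nice: if both divisors were of the first kind then $K_X+(n-1)H$ would vanish on the basis $\{[\li^1],[\li^2]\}$ of $\cycl(X)$, so $-K_X\equiv (n-1)H$ and (\ref{n+1}) would force $n=2$. But the step you yourself flag as the hard core --- excluding divisors of the third kind --- is not proved, and the sketch you give for it does not close. Three concrete problems. First, to contract $\mathbb R_+[\li^i]$ you must first know that it spans a $K_X$-negative \emph{extremal} ray; this is not automatic and is exactly what the paper's Steps 1--2 are for (one proves $E_1\cdot C^2=0$ for the pair maximizing $m=\kd\li^1$, and then that $K_X+mH$ is nef, which exhibits a supporting divisor of the ray). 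Second, your assertion that Proposition (\ref{iowifam}) gives $\loc(\li^i)_y=E_i$ for $y\in E_i$ is false precisely in the third-kind case: it only yields $\dim\loc(\li^i_y)\ge \kd\li^i$, which is strictly less than $n-1$ there. Third, even granting the smooth blow-up structure, your disjointness-then-triviality argument only rules out the mixed configuration (one divisor of the third kind, the other of the first kind, where a contracted curve inside $E_j$ contradicts $\cycx{E_j}=\langle[\li^j]\rangle$); when \emph{both} divisors are of the third kind --- which is numerically allowed by (\ref{liml}), e.g. $\kd\li^1+\kd\li^2=n$ with both degrees strictly between $1$ and $n-1$ --- no contradiction arises from $[\li^i]\in\cycx{E_j}=\cycl(X)$, and your method gives nothing.

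The paper closes this gap by a different mechanism: after establishing that the contraction of $\mathbb R_+[\li^1]$ (for the pair maximizing $m$) is a smooth blow-up $\f:X\to X'$ with $\rho_{X'}=1$, it writes $-K_{X'}=kH'$ and combines the canonical bundle formula $-K_X=kH+(k-m)E_1$ with $E_1\cdot C^2=0$, $E_1\cdot C^1\ge 2$ and (\ref{limc}) to obtain $n+1-m=2k+(k-m)(E_1\cdot C^1)\le 2k-2(m-k)\le 2$, which forces $m=n-1$ and the second divisor to be of the second kind --- the desired contradiction. If you want to salvage your plan, you would need an argument of this numerical type (or some other device) for the two-third-kind configuration; the cone-theoretic disjointness argument alone cannot reach it.
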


\begin{proof}
We will prove the proposition by contradiction.\\
By Proposition (\ref{second}) we can assume that there are no divisors of $V$-lines of the second kind.
We choose  $(\li^1,C^1)$ to be a pair such that $m:=-K_X \cdot \li^1$ is maximum among the anticanonical degrees of families belonging to pairs in $\mathcal B'$; since there are no divisors of the second kind we have $m >1$.\\ 
Since we are assuming that  $\mathcal B'$ contains  a pair $(\li^2,C^2)$ with $[\li^2] \not = [\li^1]$ we have that $m \ge n/2$ by formula (\ref{liml}\!\!).  

\medskip
{\bf Step 1} \quad $E_1 \cdot C^2=0$.\par
\medskip
From the maximality of $m$ it follows that $-K_X \cdot  \li^2 < -K_X \cdot \li^1$, hence that $-K_X \cdot C^2 > -K_X \cdot C^1$. Notice that the numerical class of $C^2$ cannot be proportional to $[\li^1]$, otherwise  $-K_X \cdot C^2=2m \ge n$, and the divisor of $V$-lines associated to the pair $(\li^2,C^2)$  would be of the second kind.\\
Therefore, if $E_1 \cdot C^2 >0$, then for a general $x \in X$ we have, by Lemma (\ref{locvf}) and by Proposition (\ref{iowifam}), that 
$$ \dim \loc(\li^1)_{\loc(C^2)_x} \ge -K_X \cdot C^2 -K_X \cdot \li^1-2 \ge  n,$$
a contradiction, since $\li^1$ is not a covering family.\par
\medskip
{\bf Step 2} \quad The adjoint divisor $D:=K_X +mH$ is nef.\par
\medskip
 If this is not the case, since  $D \cdot \li^1 = 0$ and $D \cdot V >0$, there
 is an extremal ray $R$ on the side of $[\li^1]$ with respect to $[V]$ on which $D$ is negative. Denote by $\f$ the associated contraction and let $W$ be a family of rational curves such that  $\overline{\loc(W)}=\Exc(\f)$ whose degree with respect to $H$ is minimal.\par
 \smallskip
Every fiber of the contraction $\f$ has dimension greater than $m$. If $\f$ is birational then this follows from Proposition (\ref{fiberlocus}), since $l(R) >m$. If else $\f$ is of fiber type
then a general fiber $F$ contains $\loc(W)_x$ for some $x$, hence we have
$$\dim F \ge \dim \loc(W)_x \ge mH\cdot W -1 > m,$$
where the last inequality follows from the fact that $X$ is not covered by lines and that $m >1$.\par
\smallskip
It follows that $E_1 \cap \Exc(\f) = \emptyset$; in fact, if this were not the case, then $E_1$ would meet a fiber of $\f$, and in this case their intersection would contain a curve, contradicting the fact that $R \not \in \conx{E_1}$ (Cf. Lemma (\ref{vdiv})).\\
Therefore $E_1 \cdot R=0$, hence, by Step 1, $[C^2] \in R$. Being $C^2$ a dominating family
$\f$ is a fiber type contraction, contradicting $E_1 \cap \Exc(\f) = \emptyset$.\par
\medskip
{\bf Step 3} \quad The contraction associated to a multiple of $D$ is a smooth blow-up.\par
\medskip
Let $\f:X \to X'$ be the contraction associated to $R:=\mathbb R_+[\li^1]$; let $W$ be a family of rational curves such that  $\overline{\loc(W)}=\Exc(\f)$ whose degree with respect to $H$ is minimal.\\
If $\f$ is of fiber type, then $H \cdot W \ge 2$, since $X$ is not covered by lines; therefore, a general fiber of $\f$ has dimension $\ge 2m-1$. Let $x$ be a general point; then 
$$\dim F + \dim \loc(C^1)_x \ge 2m-1+(n-m) =n+m-1>n, $$
a contradiction.\\
Therefore $\f$ is birational; in particular there exists an irreducible divisor which is negative on $R$ and therefore contains $\Exc(\f)$. So this divisor is $E_1$ and $E_1 \cdot R <0$.
Recall that, by construction we have $E_1 = \loc(\li^1)_{\loc(C^1)_x}$ for a general $x \in X$;
it follows that any fiber $F$  of  $\f$ meets $\loc(C^1)_x$, hence, by Proposition (\ref{iowifam}) we have
$$n \ge \dim F + \dim \loc(C^1)_x \ge \kd \li^1 \kd C^1 -1 = n,$$
hence equality holds. In particular for any fiber of $\f$ we have $\dim F = \kd \li^1$, thus $\f$ is a smooth blow-up by Theorem (\ref{blowup}).\par
\medskip
{\bf Step 4} \quad Conclusion.\par
\medskip
By Step 1 $E_1 \cdot C^2 =0$, hence  $[C^2]$ is in the interior of the
cone $\langle [\li^1], [V]\rangle$ and so $E_1 \cdot V>0$, which in turn implies $E_1 \cdot C^1 \ge 2$.
By formula (\ref{limc}\!\!) we have 
\begin{equation}\label{m+1}
-K_X \cdot C^2 \le n+2 -(-K_X \cdot C^1) = -K_X \cdot \li^1 + 1 = m+1.
\end{equation}
Let $\f:X \to X'$ be the blow-up; the line bundle $H +E_1$ is trivial on $R$, hence there exists $H' \in \pic(X')$ such that  $H+E_1= \f^*H'$. Since $\rho_{X'}=1$ we can write $-K_{X'}= kH'$; being $X'$ Fano and $H+E_1$ nef we have $k>0$.\\
By the canonical bundle formula
$$-K_X = -\f^*K_{X'} -mE_1= kH + (k-m)E_1$$
we have
\begin{equation}\label{2km+1}
m+1 \ge -K_X \cdot C^2 =-\f^*K_{X'} \cdot C^2  =2k
\end{equation}
 and
$$n+1-m=-K_X \cdot C^1 = 2k +(k-m)(E_1 \cdot C^1).$$
Recalling that $E_1 \cdot C^1 \ge 2$; then we have
$$n+1-m \le 2k - 2(m-k)\le 2,$$
where the last inequality follows from (\ref{2km+1}\!).\\
Since $m \le n-1$, the only possibility is that all the inequalities are equalities; in particular $m=n-1$ and  $E_2$ is of the second kind, a contradiction.
\end{proof}
Now we will prove the first part of Theorem (\ref{main}) under the assumption that the Picard number of $X$ is two.
\begin{theorem}\label{thmrho2} Let $(X,H)$ be a RCC-manifold with respect to $V$, not covered by lines, and of Picard number two. Then there exists a polarized manifold $(X',H')$  of Picard number one not covered by lines and an elementary contraction $\f_\sigma:X \to X'$ expressing $X$ as a blow-up of $X'$ along a smooth center $T$, with exceptional divisor $E$, such that  $H \simeq \f_\sigma^*H' - E$. The pair $(X',H')$ is RCC with respect to $V'$, the family of deformations of the image of a general curve parametrized by $V$.
\end{theorem}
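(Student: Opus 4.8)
The plan is to combine Propositions \ref{proprho2} and \ref{key}. By Proposition \ref{proprho2} there is, up to numerical equivalence, a single pair $(\li,C)\in\mathcal B'$, hence a single divisor of $V$-lines $E=\loc(\li)$; moreover $[V]=[\li]+[C]$ in $\cycl(X)$, since the reducible cycle $\ell+\gamma\in\V$ through a general point has that class and all members of the irreducible Chow family $\V$ are numerically equivalent. Applying Proposition \ref{key} to the one-element collection $\mathcal F=\{E\}$, the whole statement reduces to the single numerical identity $E\cdot V=0$: once this is known, the contraction $\f_\sigma\colon X\to X'$ furnished by Proposition \ref{key} is elementary (because $\rho_X=2$), exhibits $X$ as the blow-up of a smooth center $T\subset X'$ with $\rho_{X'}=1$ and $X'$ not covered by lines, satisfies $H\simeq\f_\sigma^*H'-E$, and makes $(X',H')$ RCC with respect to $V'$.

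If $E$ is of the second kind we are done immediately: Proposition \ref{second} identifies $\f_\sigma$ with the blow-up of $\proj^n$ along a codimension-two linear subspace or smooth quadric, with $H=\f_\sigma^*\Ol_{\proj^n}(3)-E$ and $V$ the family of deformations of the strict transform of a general line; since $(\proj^n,\Ol_{\proj^n}(3))$ is RCC with respect to the family of lines -- which is exactly $V'$ -- this is the assertion. So from now on I may assume $m:=\kd\li\ge 2$, i.e.\ $E$ is of the first or third kind.

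Next I would produce the blow-down directly, as the contraction of $R:=\mathbb R_+[\li]$, following the ideas of Steps 2--3 of the proof of Proposition \ref{proprho2}; for the single pair $(\li,C)$ these rely only on $m>1$ and the absence of a covering family of lines. One shows that $R$ is an extremal ray and that its contraction $\f_\sigma\colon X\to X'$ is not of fibre type -- a general fibre would have dimension $\ge 2m-1$ and meet $\loc(C)_x$, forcing $\dim F+\dim\loc(C)_x\ge(2m-1)+(n-m)>n$ -- so that $\f_\sigma$ is birational with $\Exc(\f_\sigma)=E$; since $E=\loc(\li)_{\loc(C)_x}$ for general $x$, every fibre of $\f_\sigma$ meets $\loc(C)_x$, and Proposition \ref{iowifam} forces every fibre to have dimension exactly $m$. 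By Theorem \ref{blowup}, $X'$ is smooth, $\f_\sigma$ is the blow-up along a smooth center $T$ of codimension $m+1$, $\rho_{X'}=1$ and $E\cdot\li=-1$, so that $K_X=\f_\sigma^*K_{X'}+mE$; as $X$ and hence $X'$ is rationally connected, $X'$ is a Fano manifold.

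The last and, I expect, genuinely delicate step is to deduce $E\cdot V=0$; since $[V]=[\li]+[C]$ and $E\cdot\li=-1$ this amounts to $E\cdot C=1$. The inequality $E\cdot C\ge 1$ is clear, because $E$ is effective and $V$ is a dominating family whose general member is irreducible and not contained in $E$, so $E\cdot V\ge 0$. For the reverse inequality I would look at the image family $\f_\sigma{}_*C$ of deformations of $\f_\sigma(\gamma)$ for a general $\gamma\in C$: it is a dominating family of rational curves on $X'$, and, being the birational image of the locally unsplit family $C$, it is again locally unsplit, so $-K_{X'}\cdot\f_\sigma{}_*C\le n+1$ (its locus through a general point is contained in $X'$, so Proposition \ref{iowifam}(b) gives the bound). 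On the other hand, by the projection formula and $\f_\sigma^*(-K_{X'})=-K_X+mE$,
\[
-K_{X'}\cdot\f_\sigma{}_*C=\bigl(-K_X+mE\bigr)\cdot C=(n+1-m)+m\,(E\cdot C)=n+1+m\,(E\cdot C-1),
\]
whence $m\,(E\cdot C-1)\le 0$ and therefore, since $m\ge 2$, $E\cdot C\le 1$. Thus $E\cdot C=1$, i.e.\ $E\cdot V=0$, and Proposition \ref{key} concludes. The two points where I expect real work are the single-pair version of the extremality argument of Step 2 of Proposition \ref{proprho2} -- which there used the class of a second pair, unavailable here, so one must instead exploit that the effective divisor $E$ cannot be nef -- and the verification that $\f_\sigma{}_*C$ is locally unsplit, i.e.\ that collapsing the center $T$ does not create reducible limits among the curves $\f_\sigma(\gamma)$.
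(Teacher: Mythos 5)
Your reduction to the single identity $E\cdot V=0$ via Propositions (\ref{proprho2}) and (\ref{key}) is exactly the paper's skeleton, and the final computation $-K_{X'}\cdot\f_{\sigma*}C=(n+1-m)+m(E\cdot C)$ is a clean way to extract $E\cdot C\le 1$ \emph{once} the blow-up structure and the degree bound are in hand. But the two points you flag as ``where I expect real work'' are not loose ends: they are the entire content of the step, and neither is filled. First, you need $R=\mathbb R_+[\li]$ to be a $K_X$-negative extremal ray with divisorial contraction \emph{before} knowing $E\cdot V=0$. In Proposition (\ref{key}) non-nefness of $E$ is deduced precisely from $E\cdot V=0$ (a nef divisor vanishing on the interior class $[V]$ would be numerically trivial), and in Step 2 of Proposition (\ref{proprho2}) the extremality argument is closed off using the class of a \emph{second} pair $[C^2]$, which you do not have. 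Your proposed substitute --- ``the effective divisor $E$ cannot be nef'' --- is not available a priori: Lemma (\ref{locvf}) only tells you $\conx{E}$ lies in the half-plane $\{\lambda[C]+\mu[\li]:\lambda\ge 0\}$ with $[\li]$ extremal in it, and if $E\cdot\li\ge 0$ nothing so far forbids $E$ from being nef. Second, the inequality $-K_{X'}\cdot\f_{\sigma*}C\le n+1$ via Proposition (\ref{iowifam})(b) requires $W_y$ to be proper for the image family $W$ at a general $y\in X'$; since $H'\cdot W=2+E\cdot C$ can be large, local unsplitness of $W$ is genuinely in doubt (it is exactly what would fail if the curves $\f_\sigma(\gamma)$ admitted degenerations through a general point), and without it the bound $\le n+1$ is simply false for dominating families in general. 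Note also a whiff of circularity: the natural way to control $W$ is to compare it with a minimal dominating family of $X'$, whose strict transform has $E$-degree zero --- which is the statement you are trying to prove.

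The paper avoids both problems by running the argument in the opposite order: it first proves $E\cdot V=0$ directly, and only then invokes Proposition (\ref{key}) to produce the extremal ray and the smooth blow-up. The direct proof assumes $E\cdot V>0$, so that $i^{-1}(E)$ dominates $V_x$ in the universal family over $\V_x$; it then takes a curve $B^0$ in $\V_x$ whose cycles all meet a fixed fiber $F$ of $\loc(\li)_y$, passes to a ruled surface $S\to B$ over its normalized closure, and applies \cite[II.4.19]{Kob} to write a multisection $\Gamma$ lying over $F$ as a combination of the section through $x$ and components of fibers; since $x$ lies on no line, the coefficient of the section is forced to vanish, giving $f\cdot\Gamma=0$ for a general fiber, a contradiction. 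If you want to keep your order of argument, you must either supply an independent proof that $E\cdot\li<0$ (hence that $[\li]$ is extremal) and that the image family is locally unsplit, or fall back on an argument of the paper's type for $E\cdot V=0$.
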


\begin{proof}
By Proposition (\ref{proprho2}) we know that, up to numerical equivalence there is only one pair
 $(\li,C)$ in $\mathcal B'$. Let $E$ be the corresponding divisor of $V$-lines. We claim that  $E \cdot V=0$.\\
Assume by contradiction that $E \cdot V >0$; let $x \in X$ be a general point, and consider the following diagram
$$
\xymatrix{\mathcal U_x \ar[d]_p & \ar[l] U_x \ar[r]^i \ar[d]_p & X\\ \V_x &  \ar[l] V_x}
$$
By our assumptions, the inverse image $i^{-1}(E)$ dominates $V_x$; moreover, since $V$ is generically unsplit $i_{|i^{-1}(E)}:i^{-1}(E) \to E$ is a generically finite map, hence it is dominating, since $\dim V_x= \dim i^{-1}(E) = \dim E$.\\
Let $y \in E$ be a general point and let $F$ be a component of $\loc(\li)_y$. We can find a (non complete) curve $\Gamma^0$ in $i^{-1}(F)$; let  $B^0$ be $p(\Gamma^0)$ and $S^0:=p^{-1}(B^0)$; notice that  every curve parametrized by $B^0$ meets $\Gamma^0$.\\
Let $\overline B$ be  the closure of $B^0$ in $\V_x$, let $\overline S$ be $p^{-1}(\overline B)$, let $\nu:B \to \overline B$ be the normalization, $S = B \times_{\overline B} \overline S$ and $\Gamma$ the curve in $S$ whose image in $S^0$ is $\Gamma^0$. Notice that, by construction, the image in $X$ of $\Gamma$ is a curve contained in $F$, hence it is numerically proportional to $[\li]$.\\
By \cite[II.4.19]{Kob} every curve in $S$ is algebraically equivalent to a linear combination
with rational coefficients of a section $C_0$ such that $i(C_0)=x$ and of the irreducible components of fibers of $p_{|S}$ (in \cite[II.4.19]{Kob} take $X = S$, $T= B$ and $Z = C_0$).\\
The images of irreducible components of fibers of $p$ are irreducible curves whose numerical class is $[V]$,  $[\li]$ or $[C]$. If all the fibers of $p:S \to B$ are irreducible, then 
 in $S$ we can write 
 $$\Gamma \equiv \alpha C_0 + \beta f ,$$
 with $[i(f)]=[V]$, and  we get that $i(\Gamma)$ is numerically proportional to $V$, a contradiction. 
So we can write
$$\Gamma \equiv \alpha C_0  +\sum \gamma_i C_i + \sum \delta_j \ell_j,$$
with   $[i(C_i)]=[C]$,  $[i(\ell_j)]=[\li]$, hence
$$[i(\Gamma)] = \sum \gamma_i[C] + \sum \delta_j[\li],$$
and we get $\sum \gamma_i=0$.\\
 Since $i(\Gamma) \not \ni x$  then $\Gamma \cdot C_0 =0$; recalling that $x$ is not contained in any line we get
$$\alpha C_0^2 +\sum \gamma_i = 0;$$
since $C_0$ goes to a point then $C_0^2 <0$, hence $\alpha=0$.\\
This implies that, for a general fiber $f$ we have $f \cdot \Gamma = 0$, a contradiction.\par
\medskip
The statement now follows applying Proposition (\ref{key}).
\end{proof}


\section{Proof of Theorem (\ref{main})}\label{secgen}



\begin{proof} Let $\mathcal B'$  be as in Section 4 and let $\mathcal B=\{(\El^i, C^i)\}_{i=1}^k$ be a maximal set of pairs in $\mathcal B'$  such that $[V],  [\El^1], \dots  [\El^k]$ are numerically independent, and chosen in the following way: if there exists a pair in $\mathcal B'$ whose divisor of $V$-lines is of the second kind we choose it to be $(\li^1,C^1)$; if no such pair exists, but there is a pair whose divisor of $V$-lines is of the third kind we choose it to be $(\li^1,C^1)$.\\
Since we have already proved the first part of the Theorem for $\rho_X=2$ we can assume that $k \ge 2$.\par
\smallskip
We start by showing that all the divisor of the first kind in $\mathcal B$ correspond to blow-ups at points, and can be simultaneously contracted; by Proposition (\ref{key}) this will be the case if $E_i \cdot V=0$ for every such divisor.\\
If $E_1$ is not of the first kind, then $E_i \cdot C^1=0$ for every first kind divisor $E_i$; we already know from Lemma (\ref{disj}) that $E_i \cdot \li^1=0$ for every $V$-divisor in $\mathcal B$ with
$i \not=1$, hence we get that $E_i \cdot V=0$ for every first kind divisor.
So we are left with the case in which every $V$-divisor in $\mathcal B$ (and in $\mathcal B'$, by our choice of $\mathcal B$) is of the first kind. Again we want to prove that $E_i \cdot V=0$ for every $i$.\\
To this aim we will consider the reduction morphism associated to the adjoint divisor $D:=K_X + (n-1)H$, which we claim to be nef and big.\par
\medskip
Assume first that $D$ is not nef; then there exists an extremal ray $R$ of $X$ which has length $l(R)\ge n$; the associated contraction $\f_R$ is of fiber type and has fibers of dimension $\ge n-1$ by Proposition (\ref{fiberlocus}); let $E$ be a first kind divisor and pick $x \in E$; let $F_x$ be the fiber of $\f_R$ passing through $x$. Then 
$$\dim (E \cap F_x) \ge \dim E + \dim F_x -n >0,$$
 a contradiction, since $[\li^1] \not \in R$, being $D \cdot \li^1=0$ and $D \cdot R <0$.\\
Therefore $D$ is nef and defines an extremal face $\tau \subset \cone(X)$, with associated contraction $\f_{\tau}:X \to  Y$.\par
\smallskip
Assume now by contradiction that $D$ is not big, i.e. that $\f_{\tau}$ is of fiber type, and let
$W$ be a minimal dominating family of rational curves such that $[W] \in \tau$.\\
 Then
$ -K_X \cdot W = (n-1)H \cdot W \le n,$
where the last inequality follows from the fact that $W$ is locally unsplit and $\rho_X \ge 1$.
Therefore $H \cdot W=1$, contradicting our assumptions that $X$ is not covered by lines.\par
\medskip
Therefore $D$ is nef and big and we can apply \cite[Theorem 7.3.2]{BSbook} to get that $Y$ is smooth and  $\f_{\tau}$ is the blow-up of $Y$ along $t$ distinct points. Since $D \cdot \li^i=0$ for every $i$ we have $t \ge k$; on the other hand, since $\rho_X=k+1$ we have $t \le k$, so $\f_\tau$  is a blow-up of a smooth $X'$ along $k$ points, and the exceptional divisors are the divisors of $V$-lines.\\
Take a curve $B$ in $X'$ not containing the centers of the blow-up and not meeting the images of the conics parametrized by the families $C^j$ belonging to  pairs in $\mathcal B'$ passing  through a fixed general point $x$. Since all the divisors of $V$-lines are of the first kind there is a finite number of these conics through $x$.\\
By construction the strict transform $\widetilde B$ does not meet cycles in $\V_x$ whose components are not proportional to $V$, hence its numerical class is proportional to $V$. Since $\widetilde B$ does not meet the $E_i$'s we have $E_i \cdot \widetilde B=0$ for every $i$, hence $E_i \cdot V=0$.\par
\medskip
We can thus apply Proposition (\ref{key}) to the set of the first kind divisors, to get a new pair $(X'',H'')$.
If $\rho_{X''} \le 2$ then we are done, otherwise every divisor of $V''$-lines is of the third kind.\\
By Lemma (\ref{disj}) these divisors are disjoint and have intersection number zero with $V$, so we can
apply again Proposition (\ref{key}). In any case we finally get to a pair $(X',H')$ with $\rho_{X'}=1$ and to a family $V'$ as in the statement.\par
\medskip
We now come to the description of $(X',H')$.\\
If $V'$ is a minimal dominating family then $X' \simeq \proj^n$ by \cite[Theorem 1.1]{Kepn};
otherwise there is a dominating family $W$ of rational curves in $X'$ such that $-K_{X'}\cdot W< n+1$.\\
Let $W^*$ be the family of deformation in $X$ of the strict transform of a curve in $W$; 
since $W$ is dominating, a general curve parametrized by $W$ does not meet the union of the centers of the blow-up, $T = \cup T_i$, hence
$E_i \cdot W^* = 0$ for every $i$ and $W^*$ is numerically proportional to $V$.\\
Since $X$ is not covered by lines we can assume that $H \cdot W^*=2$;
using again the canonical bundle formula and the projection formula we get 
$$ -K_{X'} \cdot W= \kd W^* = -\frac{2}{3} K_X \cdot V = \frac{2}{3}(n+1).$$
We know that $H= \f^*H' -\sum E_i$, hence $H' \cdot W=2$ and $\f^*H' \cdot C^i=3$, therefore
$H'$ is the fundamental divisor of $X'$, and the index of $X'$ is $\frac{n+1}{3}$.\par
\medskip
The family $W^*$ is locally unsplit since $X$ is not covered by lines; for a general $x$ we have 
$$\dim \loc(W^*)_x \ge \kd W^* -1 \ge \frac{2n-1}{3}.$$
Notice that $\codim(T_i)-1=-K_X \cdot \li^i$, hence 
$$\dim \loc(C^i)_x =-K_X \cdot C_i -1= n +K_X \cdot \li^i = \dim T_i +1.$$
If for some $i$ we have  $\dim T_i \ge (n+1)/3$ then
 we get a contradiction considering the intersection $\loc(C^i)_x \cap \loc(W^*)_x$ for any $i$.
\end{proof}


\section{RCC Fano  manifolds}\label{secfano}


In this section we will show how restricting to RCC Fano manifolds leads to
stronger results.

\begin{proposition}\label{target} In the assumptions of Theorem (\ref{main}), if $X$ is a Fano manifold and
 $(X',H') \not\simeq (\proj^n, \Ol(3))$ then  $\dim T_i=\frac{n-2}{3}$ for every $i$.
\end{proposition}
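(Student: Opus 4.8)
The plan is to manufacture, for each $i$, a rational curve $\tilde\ell_i\subset X$ with
$$H\cdot\tilde\ell_i=1,\qquad E_i\cdot\tilde\ell_i=1,\qquad E_j\cdot\tilde\ell_i=0\ \ (j\neq i),$$
and then to read off $\dim T_i$ by evaluating $-K_X$ on it. First I would record the numerical data. Since $(X',H')\not\simeq(\proj^n,\Ol(3))$, Theorem \ref{main} puts us in the case $\pic(X')=\langle H'\rangle$ and $-K_{X'}=\frac{n+1}{3}H'$; in particular $3\mid(n+1)$, so $\frac{n+1}{3},\frac{2(n+1)}{3},\frac{n-2}{3}\in\mathbb Z$, and the estimate $\dim T_i<\frac{n+1}{3}$ of Theorem \ref{main} reads $\dim T_i\leq\frac{n-2}{3}$, i.e. $c_i:=\codim_{X'}T_i\geq\frac{2(n+1)}{3}$. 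The canonical bundle formula for the blow-up $\f_\Sigma$, together with $\f_\Sigma^*H'=H+\sum_jE_j$, yields
$$-K_X=\frac{n+1}{3}\,\f_\Sigma^*H'-\sum_j(c_j-1)E_j=\frac{n+1}{3}\,H+\sum_j\frac{n+4-3c_j}{3}\,E_j .$$
Recalling from the proof of Theorem \ref{main} that $E_i\cdot C^i=1$, that $E_j\cdot C^i=0$ for $j\neq i$ and that $H\cdot C^i=2$, a curve $\tilde\ell_i$ as above would have $-K_X\cdot\tilde\ell_i=\tfrac{n+1}{3}+\tfrac{n+4-3c_i}{3}=\tfrac{2n+5-3c_i}{3}$, which is positive because $X$ is Fano; hence $c_i<\tfrac{2(n+1)}{3}+1$, so $c_i\leq\tfrac{2(n+1)}{3}$ by integrality, and combining with the reverse inequality this forces $c_i=\tfrac{2(n+1)}{3}$, i.e. $\dim T_i=\tfrac{n-2}{3}$.

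So the actual work is the construction of $\tilde\ell_i$, and the plan there is: produce a conic on $X'$ meeting $T_i$ transversally at a single point, then take its strict transform. The curves $\f_\Sigma(C^i)$ are free rational cubics on $X'$ (indeed $H'\cdot\f_\Sigma(C^i)=3$ and $-K_{X'}\cdot\f_\Sigma(C^i)=n+1$, using the data above), each meeting $T_i$ in a single reduced point, and they move in a family of dimension $\geq 2n-2$. Picking a point $t\in T_i$ lying on one of them, the subfamily of such cubics through $t$ has dimension $\geq n-1\geq1$, and every member of it meets $T_i$ only at $t$, to first order. By bend and break some member degenerates into a reducible cycle. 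Using that the intersection number with $E_i$ of the strict transform stays equal to $1$ along the family, and that $H$ is ample (so no curve of $H$-degree $\leq 0$ can occur), I would show that no component of this cycle lies in $T_i$, that exactly one component is an irreducible conic $\gamma$, that $t\in\gamma$, and that $\gamma$ meets $T=\bigcup_j T_j$ only at $t$ and transversally. Then the strict transform $\tilde\ell_i$ of $\gamma$ has the three intersection numbers required above.

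The step I expect to be the main obstacle is precisely this construction — keeping control of the intersection with the centres as the cubic degenerates. One must be sure the limit cycle has a genuine \emph{conic} component through $t$ and that this conic meets $\bigcup_j T_j$ only at $t$ and with multiplicity exactly one: a curve crossing $E_i$ with multiplicity $\geq 2$ (or hitting two distinct centres) would yield only the bound $\dim T_i>\frac{n-3}{2}$, which in the present range $\dim T_i\leq\frac{n-2}{3}$ is not even consistent with the statement. It is exactly the fact that the cubics $\f_\Sigma(C^i)$ meet $T_i$ with intersection number $1$, combined with the ampleness of $H$, that pins the multiplicity down and makes the numerical estimate close.
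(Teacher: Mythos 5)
Your numerical endgame is sound and is, in substance, the same computation the paper makes. Once one has in hand a rational curve $\tilde\ell_i$ with $\f_\Sigma^*H'\cdot\tilde\ell_i=2$, $E_i\cdot\tilde\ell_i=1$ and $E_j\cdot\tilde\ell_i=0$ for $j\ne i$, the canonical bundle formula plus Fano-ness gives $\codim T_i\le\frac{2(n+1)}{3}$, and together with the bound $\dim T_i<\frac{n+1}{3}$ from Theorem (\ref{main}) this forces $\dim T_i=\frac{n-2}{3}$; your arithmetic here is correct. In fact the paper produces exactly this curve, only it evaluates $-K_X$ on the complementary piece: it exhibits a reducible member $l^*+\bar l^*$ of the Chow family of the conics $W^*$ (strict transforms of minimal curves of $X'$), with $l^*\subset E_i$ and $[l^*]=[\li^i]$, so that $-K_X\cdot\li^i=-K_X\cdot W^*-(-K_X\cdot\bar l^*)<\frac{2(n+1)}{3}$; since $E_i\cdot W^*=0$ and $E_i\cdot l^*=-1$, the other component $\bar l^*$ has $H\cdot\bar l^*=1$, $E_i\cdot\bar l^*=1$, $E_j\cdot\bar l^*=0$, i.e.\ it \emph{is} your $\tilde\ell_i$.

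The genuine gap is the construction of $\tilde\ell_i$, which you rightly flag as the main obstacle but do not close, and the route you propose (bend-and-break applied to the cubics $\f_\Sigma(C^i)$ through a fixed $t\in T_i$) is substantially harder to control than what is actually needed. Bend-and-break only yields a connected non-integral limit cycle through $t$; you must then exclude that the limit is non-reduced, that the component through $t$ is a line contained in $T_i$ (lines of $\proj$-type meeting $T_i$ but not contained in it are killed by ampleness of $H$, but lines \emph{inside} $T_i$ are not), and that the residual degree is carried by components either contained in $T$ or disjoint from it. Moreover the bookkeeping must be done on total transforms: strict transforms of components lying in $T_i$ have negative $E_i$-degree, so your assertion that ``the intersection number with $E_i$ of the strict transform stays equal to $1$ along the family'' is false component by component and cannot be used as stated. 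The paper sidesteps all of this by breaking the \emph{conics} rather than the cubics, and by forcing the break globally instead of through a point: since $E_j\cdot W^*=0$ for all $j$, the class $[W^*]$ is proportional to $[V]$, the rc$(\W^*)$-quotient is a point, and if $\W^*$ were quasi-unsplit this would give $\rho_X=1$; hence a reducible member with non-proportional components exists, and ampleness of $H$ immediately makes both components $H$-lines with the required intersection numbers. To repair your argument, either substitute this quasi-unsplitness step for the bend-and-break step, or supply the full case analysis of the limit cycle.
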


\begin{proof} 
We keep the notation of the proof of Theorem (\ref{main}).\\
The rc$(\W^*)$-fibration contracts curves parametrized by $V$, hence it goes to a point. Therefore
$W^*$ cannot be a quasi-unsplit family, otherwise $\rho_X=1$, hence there is a reducible cycle $l^* + \bar l^*$ in $\W^*$ whose components are not numerically proportional.\\
For at least one $i$ the divisor $E_i$ is not trivial on both $l^*$ and $\bar l^* $, hence, up to exchange them, we can assume $E_i \cdot l^* <0$; therefore $[l^*] \in R^i$ and $-K_X \cdot l^* = -K_X \cdot \li^i$, so
$$-K_X \cdot \li^i < -K_X \cdot W^* = \dfrac{2(n+1)}{3},$$
and we get  $\codim T_i \le 2/3(n+1)$, which, combined with the bound obtained in Theorem (\ref{main}) gives $\dim T_i=\frac{n-2}{3}$.
\end{proof}

\subsection{Higher Picard number}

In this section we are going to prove that a Fano RCC manifold has Picard number at most three, and to classify those of Picard number three.\\
We will need the following result from basic projective geometry; the proof we are giving here was pointed out to us by Francesco Russo.

\begin{lemma}\label{sec} Let $T \subset \proj^n$ be a projective manifold, and denote by $\mathcal S(T)$ its
secant variety. Then
\begin{enumerate}
\item If $\dim \mathcal{S}(T)=\dim T$ then $T$ is a linear subspace of $\mathbb{P}^n$.
\item If $\dim \mathcal{S}(T)=\dim T+1$ then $T$ is a hypersurface of $\langle T \rangle  \simeq\mathbb{P}^{\dim T+1}$.
\end{enumerate}
\end{lemma}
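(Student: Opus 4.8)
The plan is to prove both statements by a dimension count on the variety of secant lines, following the classical circle of ideas around Severi's theorem (a variety whose secant variety does not fill a bigger space is very special). Let me set up notation: let $T \subset \proj^n$ be a projective manifold of dimension $d$, and consider the abstract secant variety $\widetilde{\mathcal S}(T) \subset \proj^n \times T \times T$ consisting of triples $(p, x, y)$ with $x \ne y$ and $p$ on the line $\overline{xy}$, together with its closure; the map to $\proj^n$ has image $\mathcal S(T)$ and generic fibre dimension $\dim \mathcal S(T) - (d+1)$ when $\mathcal S(T)$ is not a cone over... more precisely $\dim \widetilde{\mathcal S}(T) = 2d+1$ and $\dim \mathcal S(T) = 2d+1 - \delta$ where $\delta \ge 0$ is the \emph{secant defect}. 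So $\dim \mathcal S(T) = d$ forces $\delta = d+1$, and $\dim \mathcal S(T) = d+1$ forces $\delta = d$.

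For part (1): if $\dim \mathcal S(T) = d = \dim T$, then since $T \subseteq \mathcal S(T)$ and both are irreducible of the same dimension, $\mathcal S(T) = T$; so $T$ contains the line through any two of its points, hence $T$ is a linear subspace. This is immediate and needs no secant-defect machinery at all. For part (2), I would use Terracini's lemma: for general $x, y \in T$ and general $p \in \overline{xy}$, the tangent space $T_p \mathcal S(T) = \langle T_x T, T_y T\rangle$. If $\dim \mathcal S(T) = d+1$, then $\langle T_x T, T_y T \rangle$ has dimension $d+1$ for general $x,y$, which means $T_x T$ and $T_y T$ (each of dimension $d$ as linear subspaces of $\proj^n$) meet in a projective subspace of dimension $d-1$. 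I would then argue that this forces all the tangent spaces $T_x T$, for $x$ ranging over $T$, to contain a common hyperplane of each other in a way that pins down $\langle T\rangle$: concretely, the entropy of a $(d-1)$-dimensional family of $d$-planes pairwise meeting in $(d-1)$-planes either has a common point or a common hyperplane (a classical fact about such linear systems), and a short argument rules out the common-point case (that would make $T$ a cone, and a smooth cone is a linear space, giving secant defect $d+1$ not $d$). Hence all $T_x T$ lie in a common $\proj^{d+1}$, so $\langle T \rangle = \proj^{d+1}$ and $T$ is a hypersurface there.

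The main obstacle I anticipate is the bookkeeping in part (2): making rigorous the step ``a $(d-1)$-dimensional irreducible family of $d$-planes that pairwise meet along $(d-1)$-planes is contained in a fixed $\proj^{d+1}$ (after excluding the cone case)''. The cleanest route is probably not to prove this linear-algebra statement from scratch but to invoke Scorza's or Palatini's classical classification, or better, to reduce directly: by Terracini the Gauss map / the span argument shows that if $\langle T\rangle$ had dimension $\ge d+2$ then a general tangent space $T_xT$ would impose the right number of conditions and $\dim \mathcal S(T)$ would be $\ge d+2$; so one shows $\dim \langle T \rangle \le d+1$, and since $T$ is nondegenerate in its span, $\langle T\rangle = \proj^{d+1}$ and $\dim T = d$ makes $T$ a hypersurface. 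I would present it this way, treating the "$T$ a cone $\Rightarrow$ linear (hence defect $d+1$, excluded)" case separately and briefly. Since Russo supplied the argument, I expect the intended proof to be short and to lean on exactly this Terracini/span dichotomy rather than a full structural classification.

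\begin{proof}
Let $d = \dim T$ and let $\widetilde{\mathcal S}(T)$ denote the closure in $\proj^n \times T \times T$ of the locus of triples $(p,x,y)$ with $x \neq y$ and $p \in \overline{xy}$; it is irreducible of dimension $2d+1$, and $\mathcal S(T)$ is the image of its projection to $\proj^n$.

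(1) We have $T \subseteq \mathcal S(T)$, and both are irreducible; if $\dim \mathcal S(T) = d = \dim T$ then $\mathcal S(T) = T$. Thus the line through any two distinct points of $T$ is contained in $T$, so $T$ is a linear subspace of $\proj^n$.

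(2) Assume $\dim \mathcal S(T) = d+1$. First note $T$ is not a cone: a smooth cone is a linear space, and for a linear space $\mathcal S(T) = T$ has dimension $d$, not $d+1$. Now fix general points $x, y \in T$ and a general point $p$ on $\overline{xy}$. By Terracini's lemma,
\[
T_p \mathcal S(T) = \langle T_x T,\ T_y T \rangle,
\]
where we view the embedded tangent spaces $T_x T, T_y T$ as $d$-dimensional linear subspaces of $\proj^n$. Since $\dim \mathcal S(T) = d+1$, the left-hand side has dimension $d+1$, so $T_x T \cap T_y T$ is a linear subspace of dimension $d-1$, i.e. a hyperplane inside each of $T_xT$ and $T_yT$, for $x,y$ general in $T$.

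Let $L = \langle T \rangle$ be the linear span of $T$. Suppose $\dim L \ge d+2$. Fixing a general $x \in T$ and letting $y$ vary over a general curve in $T$, the spaces $\langle T_xT, T_yT\rangle$ sweep out a subvariety of $L$; since each such span has dimension exactly $d+1$ and $y \mapsto T_yT$ is not constant (else $T$ would be contained in the fixed $d$-plane $T_yT = L$, forcing $\dim L = d$), these $(d+1)$-planes are not all equal, and their union $U$ has $\dim U \ge d+2$. As $p$ ranges over $\overline{xy}$ and $y$ over all of $T$, the tangent spaces $T_p\mathcal S(T)$ cover $U$, whence $\dim \mathcal S(T) \ge \dim U \ge d+2$, a contradiction. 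Therefore $\dim L \le d+1$. Since $T$ is nondegenerate in $L$ and $\dim T = d$, we must have $\dim L = d+1$, and $T$ is a hypersurface in $L \simeq \proj^{d+1}$.
\end{proof}
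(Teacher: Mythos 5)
Part (1) of your proof is correct, and in fact slightly more direct than the paper's (which goes through tangent spaces rather than observing that $T$ contains all of its secant lines). The problem is in part (2), where the final inference is a non sequitur: from the fact that the embedded tangent spaces $\mathbb T_p\mathcal S(T)=\langle \mathbb T_xT,\mathbb T_yT\rangle$ sweep out a set $U$ with $\dim U\ge d+2$ you conclude $\dim\mathcal S(T)\ge\dim U$. But $U$ is (part of) the tangent variety of $\mathcal S(T)$, i.e.\ the union of its embedded tangent spaces, and that union is not contained in $\mathcal S(T)$; in general it has strictly larger dimension than the variety itself (already for a space curve the union of the tangent lines is a surface). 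What is true is $\mathrm{Tan}(Z)\subseteq\mathcal S(Z)$, so your $U$ only sits inside $\mathcal S(\mathcal S(T))$, which gives no bound on $\dim\mathcal S(T)$. Hence the contradiction you want from $\dim\langle T\rangle\ge d+2$ does not follow, and this is precisely the substantive content of part (2). (A smaller slip: non-constancy of $y\mapsto\mathbb T_yT$ does not by itself make the spans $\langle\mathbb T_xT,\mathbb T_yT\rangle$ non-constant --- they could all equal one fixed $(d+1)$-plane $M\supseteq\mathbb T_xT$; the correct remark is that a constant span $M$ would contain every general $y\in\mathbb T_yT\subseteq M$, hence $T\subseteq M$ and $\dim\langle T\rangle\le d+1$ directly.)

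The paper's argument avoids Terracini entirely. Since $T\subsetneq\mathcal S(t,T)\subseteq\mathcal S(T)$ and $\dim\mathcal S(T)=\dim T+1$, the relative secant variety satisfies $\mathcal S(t,T)=\mathcal S(T)$ for general $t\in T$. Therefore a general point $x$ of $\mathcal S(T)$ lies on a line through $t$ that is contained in $\mathcal S(T)$; such a line lies in $\mathbb T_x\mathcal S(T)$, so $t\in\mathbb T_x\mathcal S(T)$ for general $t$, whence $\mathcal S(T)\subseteq\langle T\rangle\subseteq\mathbb T_x\mathcal S(T)$. Comparing dimensions forces $\mathcal S(T)=\mathbb T_x\mathcal S(T)=\proj^{d+1}$, so $T$ is a hypersurface of its span. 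If you want to keep the Terracini setup you need some genuine extra input (for instance the inclusion $\mathrm{Tan}(T)\subseteq\mathcal S(T)$ applied to $T$ itself rather than to $\mathcal S(T)$, or facts about the Gauss map) to turn the tangency information into a bound on $\dim\langle T\rangle$; as written, that step is missing.
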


\begin{proof} We denote by $\mathbb T_z Z$ the projective tangent space to a variety $Z$ at a point $z$; if $t \in T$ we denote by $\mathcal{S}(t,T)$ the relative secant variety of $T$ with respect to $t$.\\
First of all notice that for every $t\in T$
\begin{equation}\label{secant}
T\subseteq \mathcal{S}(t,T)\subseteq \mathbb T_t\mathcal{S}(t,T)\subseteq \mathbb T_t \mathcal{S}(T).
\end{equation}
Assume that $\dim \mathcal{S}(T)=\dim T$; clearly $\mathcal{S}(T)=T$. Let $t\in T$ be a point of $T$. By (\ref{secant}\!) and by our assumptions we have that
$$T\subseteq \mathbb T_t \mathcal{S}(T)=\mathbb T_t T=\mathbb{P}^{\dim T}$$
and hence $T=\mathbb{P}^{\dim T}$ since $T$ and $\mathbb T_t T$ are  irreducible varieties of the same dimension $\dim T$.\\
Now we suppose that $\dim \mathcal{S}(T)=\dim T+1$. For a general point $t\in T$ 
$$T\subsetneq\mathcal{S}(t, T)\subseteq\mathcal{S}(T)$$
and hence
$$\dim T< \dim \mathcal{S}(t, T)\leq\dim \mathcal{S}(T)=\dim T+1.$$
This implies that for a general point $t\in T$ we have $\mathcal{S}(t, T)=\mathcal{S}(T)$, hence for a general point $x\in \mathcal{S}(T)\setminus T$ there exists $t'\in T$ such that 
$$x\in\langle t,t'\rangle\subset \mathcal{S}(t,T)=\mathcal{S}(T).$$ 
From this it follows that a general point $t\in T$ is contained in $\mathbb T_x \mathcal{S}(T)$ and that
$$\mathcal{S}(T)\subseteq \langle T \rangle\subseteq \mathbb T_x \mathcal{S}(T),$$
where $\langle T\rangle$ is the linear span of $T$ in $\mathbb{P}^n$.\\
Now, by the generality of $x\in \mathcal{S}(T)$ we know that $\dim \mathbb T_x \mathcal{S}(T)=\dim \mathcal{S}(T)$ and hence
$$\mathcal{S}(T)=\mathbb T_x \mathcal{S}(T)=\mathbb{P}^{\dim \mathcal{S}(T)}=\mathbb{P}^{\dim T+1},$$
which concludes the proof.
\end{proof}

\begin{proof}[Proof of Theorem (\ref{th:15})] By Theorem (\ref{main}) and Proposition (\ref{target}) we know that $(X,H)$ is the blow-up of $(X', H')$ along disjoint centers. Assume that $\rho_X >3$ and let $(\li^i,C^i), (\li^k,C^k)$ be two independent pairs in $\mathcal B'$ with associated divisors of $V$-lines $E_i$ and $E_k$.\\
Since $E_i$ cannot contain curves of $C^k$, but $C^k$ is dominating, it follows that there exists a reducible
cycle $l_k + \bar l_k$ in $\mathcal C^k$ such that $E_i \cdot l_k <0$; this implies that $[l_k] \in \conx{E_i} \subset \Pi_i$.
Notice that $H+E_i$ is nef on $\Pi_i$, hence $E_i \cdot l_k=-1$; since both $H$ and $E_i$ have the same intersection number with $l_k$ and $\El^i$ then $[l_k]=[\El^i]$.\\
Recalling that $X$ is Fano, this implies that $-K_X\cdot C^k\geq-K_X\cdot	\El^i+1$, and so we get that $\kd (\El^k + \El^i) \le n$. Hence, by formula (\ref{liml}\!\!) that
\begin{equation}
\kd (\El^k + \El^i) = n.
\end{equation}
Notice that $-K_X \cdot \li^i$ is the length of the contraction of the extremal ray generated by $[\li^i]$; in particular, if $T_i$ is the image of $E_i$ via $\pi$, we have $\dim T_i = n - (-K_X \cdot \li^i)-1$,
hence, for $i \not = k$
\begin{equation}\label{dimt}
\dim T_k + \dim T_i = n -2.
\end{equation}
By Proposition (\ref{target}) it follows that  $(X',H') \simeq (\proj^n, \Ol(3))$.\\
If $\rho_X > 3$, combining with formula (\ref{limc}\!\!) we have that,
for every $i$,
$$\kd C^i = \dfrac{n+2}{2} \quad \text{and} \quad \kd \El^i = \dfrac{n}{2} ,$$ 
hence $\dim T_i = \frac{n-2}{2}$ for every $i$.\par
\medskip 
Recall now that $H=\pi^*\Ol(3) - \sum E_i$ is ample; take two of the centers $T_1$ and $T_2$
and consider their join: it has dimension $n-1$, hence it meets some other center $T_3$.
Take a line $\ell$ meeting three centers; then $(\pi^*\Ol(3) - \sum E_i) \cdot \ell \le 0$,
a contradiction. Therefore $\rho_X\leq 3$.\par
\medskip
Assume now that $\rho_X=3$.
Let $\mathcal{S}(T_1)$ be the secant variety of $T_1$. Suppose that $\dim\mathcal{S}(T_1)\geq\dim T_1+2$. Then 
\begin{eqnarray*}
\dim (\mathcal{S}(T_1)\cap T_2)&\geq&\dim \mathcal{S}(T_1)+\dim T_2-n\\
&\geq&\dim T_1+2+\dim T_2-n\\
&=&0
\end{eqnarray*}
i.e. there is a line $l$ in $\mathbb{P}^n$ which meets $T_1$ in two points and $T_2$ in a point; as above we show that $H \cdot l \le 0$.\\
It follows by Lemma (\ref{sec}) that either $T_i$ is a linear space or an hypersurface in a linear space of dimension $\dim T_i +1$.\\ 
Notice also that from the ampleness of $H$ it follows that there cannot exist trisecant lines of $T_i$ in $\mathbb{P}^n$, and hence, if it is not a linear space then $T_i$ is a hyperquadric, and we can prove that $\dim T_i\geq\frac{n}{2}-1$.\\ In fact, considering the strict transform $l$ of a secant line of $T_i$ and recalling that $X$ is Fano, by the canonical bundle formula, we get
$$
1\leq-K_X\cdot l= (n+1)-2\codim(T_i) +2
$$
$$\Rightarrow \dim T_i\geq \frac{n}{2}-1.$$
Therefore
\begin{enumerate}
	\item if $\dim \mathcal{S}(T_1)=\dim T_1$ and $\dim \mathcal{S}(T_2)=\dim T_2$, then $X$ is the blow-up of $\mathbb{P}^n$ along two disjoint linear subspaces.
	\item If $\dim \mathcal{S}(T_1)=\dim T_1$ and $\dim \mathcal{S}(T_2)=\dim T_2+1$, then $X$ is the blow-up of $\mathbb{P}^n$ along a linear subspace $T_1$ and along a quadric $T_2\subset\Lambda_2\simeq\mathbb{P}^{\dim T_2+1}$ such that $\dim T_1\leq(\frac{n}{2}-1)$. Moreover $\Lambda_2$ and $T_1$ must be disjoint, because there cannot exist lines in $\mathbb{P}^n$ which meet $T_1$ in a point and $T_2$ in two points. 
	\item If $\dim \mathcal{S}(T_1)=\dim T_1+1$ and $\dim \mathcal{S}(T_2)=\dim T_2+1$, then $X$ is the blow-up of $\mathbb{P}^n$ along two quadrics $T_1,T_2$ such that $\dim T_1=\dim T_2=\left(\frac{n}{2}-1\right)$ (clearly $n$ is even). Notice also that $T_i\subset\Lambda_i\simeq\mathbb{P}^{\frac{n}{2}}$, and $\dim (\Lambda_1\cap\Lambda_2)=0$ because there cannot exist trisecant lines of $T_1\cup T_2$. 
\end{enumerate}
\end{proof}

\subsection{Picard number two: some examples}

Let $(X,H)$ be a RCC Fano manifold of Picard number two obtained as the blow-up of  $\proj^n$ along a smooth center $T$. Denote by $\f:X \to \proj^n$ the blow-up contraction and by $E$ the exceptional divisor; then $H =\f^*\Ol_{\proj^n}(3) -E$.\\
By the ampleness of the anticanonical bundle
$$-K_X = \f^*\Ol_{\proj^n}(n+1) - (\codim T-1)E,$$ 
we have that, if $T$ is not a linear space, then $\dim T > (n-3)/2$. To see this, just compute
the intersection of $-K_X$ with the strict transform of a secant line of $T$.\\ 
Moreover, by the ampleness of $H$ we get that $T$ has no trisecants.
A large class of examples is given by the following

\begin{proposition}\label{Fano2} Let $T \subset \proj^n$ be a smooth subvariety of dimension $t > (n-3)/2$ whose homogeneous ideal is generated by quadrics. Then the pair $(X,H)=(Bl_T(\proj^n), 3\cH-E)$ is a Fano RCC manifold.
\end{proposition}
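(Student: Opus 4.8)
The plan is to check the three assertions of the statement in turn: that $H$ is ample, that $X$ is Fano, and that $(X,H)$ is RCC. Throughout write $\f\colon X\to\proj^n$ for the blow-up contraction, $\cH=\f^*\Ol_{\proj^n}(1)$ and $c=\codim T=n-\dim T$, and note that $X$ is smooth because $T$ is. If $c=1$ the hypothesis on the ideal forces $T$ to be a smooth quadric hypersurface, so $\f$ is an isomorphism, $X\simeq\proj^n$ and $H\simeq\Ol_{\proj^n}(1)$, which is RCC with respect to the family of rational normal cubics (cf. Proposition~\ref{pic1}); so from now on I assume $c\ge 2$.

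\emph{Ampleness of $H$.} Saying that the homogeneous ideal of $T$ is generated by quadrics means exactly that $\mathcal I_T$ is generated by its degree-two part, hence that $\mathcal I_T(2)$ is globally generated. Since $\f^{-1}\mathcal I_T\cdot\Ol_X=\Ol_X(-E)$, pulling back and twisting by $2\cH$ yields that $L:=2\cH-E$ is globally generated on $X$; let $\psi\colon X\to\proj^N$ be the associated morphism, so that $\psi^*\Ol(1)=L$. I would then argue that $(\f,\psi)\colon X\to\proj^n\times\proj^N$ is finite: a curve contracted by it is contracted by $\f$, hence lies in a fibre $\proj^{c-1}$ of $\f$, but $L$ restricts to $\Ol_{\proj^{c-1}}(1)$ on such a fibre, so the curve cannot also be contracted by $\psi$. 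Therefore $H=\cH+L=(\f,\psi)^*\big(\Ol(1)\boxtimes\Ol(1)\big)$ is the pull-back of an ample divisor under a finite morphism, hence ample.

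\emph{Fano property.} A direct computation, using $E=2\cH-L$, gives
$$-K_X-H=(n-2)\cH-(c-2)E=(n+2-2c)\,\cH+(c-2)\,L.$$
The hypothesis $\dim T>(n-3)/2$ forces $2\dim T\ge n-2$ (as $\dim T\in\mathbb Z$), that is $n+2-2c\ge 0$, and $c-2\ge 0$ because $c\ge 2$; thus $-K_X-H$ is a non-negative combination of the nef divisors $\cH$ and $L$, hence nef, and so $-K_X=H+(-K_X-H)$ is the sum of an ample and a nef divisor. Consequently $-K_X$ is ample and $X$ is Fano.

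\emph{RCC property.} I would take $V$ to be the family of deformations of the strict transform of a general line of $\proj^n$. Since $\dim T\le n-2$, a dimension count on the Grassmannian of lines shows that a general line $\ell$ is disjoint from $T$; then $\f$ induces an isomorphism $\widetilde\ell\simeq\ell\simeq\proj^1$, one has $H\cdot\widetilde\ell=3\cH\cdot\widetilde\ell-E\cdot\widetilde\ell=3$, and $N_{\widetilde\ell/X}\simeq N_{\ell/\proj^n}$ is globally generated, so $\widetilde\ell$ is free and $V$ is a dominating family of rational curves of $H$-degree $3$. Given two general points of $X$, their images in $\proj^n$ lie off $T$, the line joining them is general hence disjoint from $T$, and its strict transform is a curve of $V$ through the two given points; so $(X,H)$ is RCC with respect to $V$. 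I expect the ampleness step to be the only delicate point, since there the hypothesis on the ideal of $T$ is genuinely needed: a nef divisor positive on every curve need not be ample, so one really has to produce the finite morphism $(\f,\psi)$ (or invoke Nakai--Moishezon), whereas once a general line is known to avoid $T$ the remaining verifications are routine.
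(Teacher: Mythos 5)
Your proof is correct, and it rests on exactly the same key input as the paper's: because the homogeneous ideal of $T$ is generated by quadrics, the divisor $L=2\cH-E$ is globally generated, i.e.\ the rational map defined by the quadrics resolves to a morphism $\widetilde\psi$ on the blow-up. Where you genuinely diverge is in how positivity is extracted from this. The paper first proves that $X$ is Fano, by observing that $\widetilde\psi$ contracts precisely the strict transforms of the secant lines of $T$ and checking via the canonical bundle formula that $-K_X$ is positive on these curves (this implicitly uses that $\rho_X=2$, so that the cone of curves is spanned by the fibre ray of $\f$ and the ray of secant transforms); it then deduces the ampleness of $H=\cH+L$ from the Kleiman criterion. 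You reverse the order and avoid the cone of curves altogether: you get ampleness of $H$ directly by exhibiting $(\f,\widetilde\psi)$ as a finite morphism to $\proj^n\times\proj^N$, and then obtain the Fano property from the identity $-K_X-H=(n+2-2c)\,\cH+(c-2)\,L$, which is a nonnegative combination of nef divisors precisely because $t>(n-3)/2$ and $c\ge 2$. Your route is more elementary and more self-contained — it makes explicit two points the paper leaves to the reader (why positivity of $-K_X$ on the secant transforms suffices, and why $H$ is positive on every extremal ray) — and it also covers the degenerate case $c=1$, which the paper does not address. What the paper's phrasing buys instead is the geometric identification of the second extremal ray with the secant lines of $T$, which is the description exploited in the examples of Section 7; for the statement as such, your argument is complete.
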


\begin{proof} Consider the rational map $\psi: \rc{\proj^n}{\proj^N}$ given by a system of quadrics which generates $\mathcal I(T)$ and the resolution of this map:
$$\xymatrix@=15pt@R=40pt{ & X=Bl_T(\proj^n) \ar[ld]_\f \ar[rd]^{\widetilde \psi} & \\
\proj^n \ar@{-->}[rr]^\psi & & \proj^N}$$
The morphism $\widetilde \psi$  is given by the linear system $|\f^*\Ol_{\proj^n}(2)-E|$, hence it contracts the strict transforms of the (bi)secants to $T$; using the canonical bundle formula we see that the intersection number of  $-K_X$ with these curves is positive, therefore $X$ is a Fano manifold.\\
The ampleness of $\f^*\Ol_{\proj^n}(3)-E$ is now given by the Kleiman criterion. The family $V$ is the family of deformation of the strict transform of a general line in $\proj^n$.
\end{proof}

\begin{remark}
Some examples of manifolds obtained as in Proposition (\ref{Fano2}) can be found in \cite{CO2}, Cases (b1)-(b6) and (c1)-(c2).
\end{remark}

\section{Examples}

\begin{example}
 $(X,H)\simeq(Bl_{\Lambda_1,\Lambda_2}(\mathbb{P}^n),3\mathcal{H}-E_1-E_2)$, where $Bl_{\Lambda_1,\Lambda_2}(\mathbb{P}^n)$ is the blow-up of $\mathbb{P}^n$ along two disjoint linear spaces  $\Lambda_1\simeq\mathbb{P}^{t}$ and $\Lambda_2\simeq\mathbb{P}^{n-2-t}$;	 $E_1,E_2$ are the exceptional divisors of  $\pi$ and $\mathcal{H}=\pi^*\mathcal{O}_{\mathbb{P}^n}(1)$.
Denote by 
\begin{itemize}
\item $R^i$ the extremal ray corresponding to the contraction of $E_i$;
\item $\varepsilon_i$ the contraction associated to $R^i$;
\item $\ell_i$ a minimal curve whose numerical class is in $R^i$;
\item $\ell$ a curve which is the strict transform of a line meeting both $\Lambda_1$ and $\Lambda_2$ in a point;
\item $D_i=\mathcal{H}-E_i$;
\end{itemize}
The line bundles  $\mathcal{H},{D}_1$ and ${D}_2$ are nef  on $X$; the cone of curves is therefore contained in the intersection of the positive halfspaces of $\cycl(X)$ determined by them.
By looking at the intersection numbers with the curves $\ell_1, \ell_2, \ell$:\par
\medskip
\begin{center}
\begin{tabular}{c|c|c|c}
	&$\ell_1$&$\ell_2$&$\ell$\\
	\hline
	$\mathcal{H}$&$0$&$0$ &$1$\\
	\hline
	${D}_1$&$1$&$0$ &$0$\\
	\hline
	${D}_2$&$0$&$1$ &$0$\\
	 \hline
  \end{tabular}
\end{center}\par
\medskip
\noindent we see that $\cone(X)$ is the intersection of those halfspaces, and that is spanned by three rays,
$R^1 = \mathbb R_+[\ell_1]$, $R^2 = \mathbb R_+[\ell_2]$, $R^3 = \mathbb R_+[\ell]$.
Clearly the elementary contractions associated to $R^1$ and $R^2$ are the the blow-downs of $E_1$ and $E_2$.\\ The elementary contraction associated to $R^3$ is divisorial, and its exceptional locus is the strict transform of the join $J(\Lambda_1,\Lambda_2)$;
it is possible to show that this contraction is the blow-up of $\proj^{n-t-1} \times \proj^{t+1}$
along a smooth subvariety $\proj^{n-t-2} \times \proj^{t}$.\par
\smallskip
\noindent
\textbf{Description the families of rational curves}
\vskip1mm
\noindent
In this example the family $V$ of cubics is the family of deformations of the strict transform of a general line of $\mathbb{P}^n$; the set $\mathcal B'$ consists of two pairs, $(\li^1,C^1)$ and $(\li^2,C^2)$: the families $\li^i$ are the families of lines contracted by the blow-down, while the families $C^i$ are the families of strict transforms of lines in $\proj^n$ meeting one of the centers.\\
Curves in $C^i$ degenerate into a line contracted by $\varepsilon_j$ ($i \not = j$) and the strict transform of a line meeting both $\Lambda_1$ and $\Lambda_2$.
\begin{center}
\includegraphics[width=5.5cm]{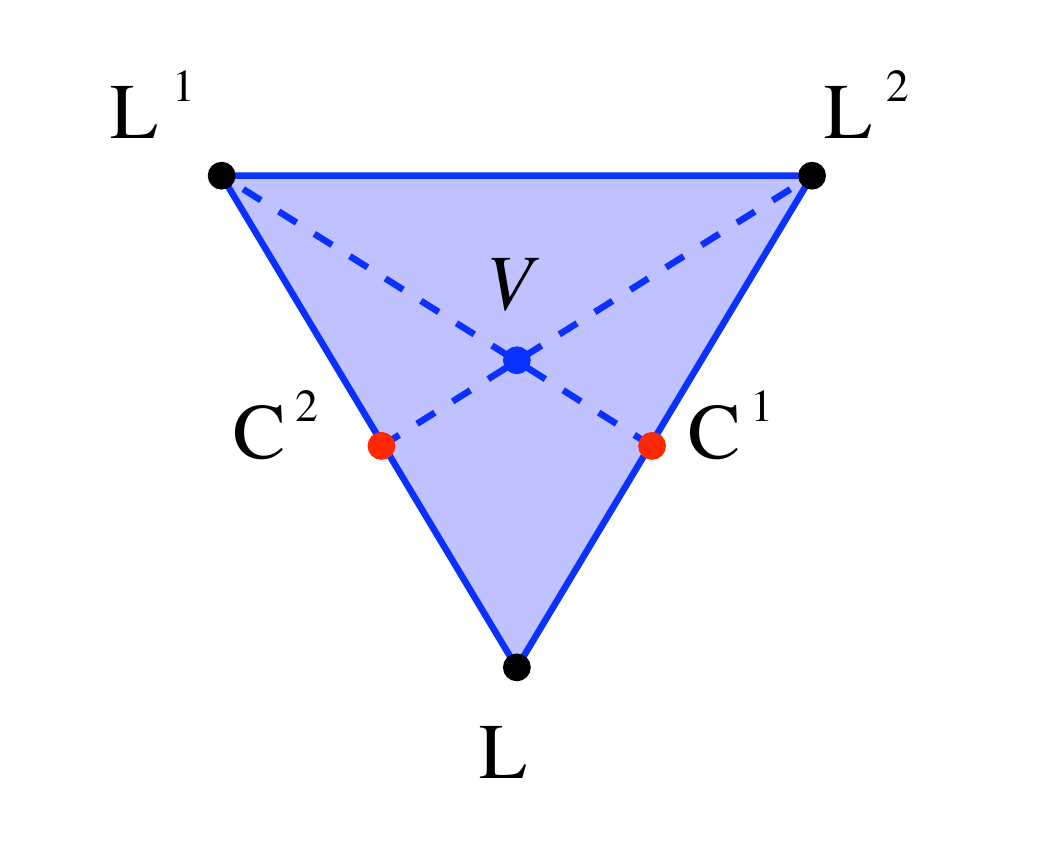}
\end{center}

\end{example}

\begin{example}\label{lq}
 $(X,H)\simeq(Bl_{\Lambda_1,Q_1}(\mathbb{P}^n),3\mathcal{H}-E_1-E_2)$, where $Bl_{\Lambda_1,Q_1}(\mathbb{P}^n)$ is the blow-up of $\mathbb{P}^n$ along a linear space $\Lambda_1\simeq\mathbb{P}^{t}$ and a smooth quadric $Q_1\subset\Lambda_2\simeq\mathbb{P}^{n-1-t}$ such that $\Lambda_1 \cap \Lambda_2 = \emptyset$;	 $E_1,E_2$ are the exceptional divisors of  $\pi$ and $\mathcal{H}=\pi^*\mathcal{O}_{\mathbb{P}^n}(1)$.
Denote by 
\begin{itemize}
\item $R^i$ the extremal ray corresponding to the contraction of $E_i$;
\item $\varepsilon_i$ the contraction associated to $R^i$;
\item $\ell_i$ a minimal curve whose numerical class is in $R^i$;
\item $\overline \ell_1$ a curve which is the strict transform of a line meeting both $\Lambda_1$ and $Q_1$ in a point;
\item $\overline \ell_2$ a curve which is the strict transform of a general line contained in $\Lambda_2$; 
\item $D_1=\mathcal{H}-E_1$;
\item $D_2=2\mathcal{H}-E_2$;
\item $D_3=2\mathcal{H}-E_1-E_2$.
\end{itemize}
The line bundles  $\mathcal{H},{D}_1$ and ${D}_2$ are nef  on $X$; we want to show that also $D_3$ is nef.\\
Suppose by contradiction that there is a irreducible curve $C\subset X$ such that $D_3\cdot C <0$. Then $(\mathcal{H}-E_1)\cdot C<0$ or $(\mathcal{H}-E_2)\cdot C <0$.\\ 
Assume  that $(\mathcal{H}-E_2)\cdot C<0$ (the other case is dealt with in a similar way); the map $\pi$ factors as $\varepsilon_2 \circ \varepsilon_1$; let $\widetilde{H}$ be an hyperplane of $\mathbb{P}^n$ which contains $\Lambda_2$ and let $H'$ be the strict transform of $\widetilde{H}$ via $\varepsilon_2$. We have that
$$\mathcal{H}-E_2=\varepsilon_1^* H'$$
and hence, by the projection formula, we get
$$(\mathcal{H}-E_2)\cdot C=H'\cdot {\varepsilon_1}_{*}C<0.$$
This implies that the curve $C$ is not contracted by $\varepsilon_1$ and that $\varepsilon_1(C)$ is contained in $H'$. Since this holds for every hyperplane containing $\Lambda_2$ we have $\pi(C)\subset \Lambda_2$, so either $C$ is contained in the strict transform of $\Lambda_2$ or $C \subset E_2$.\\
Since $E_2=\proj(\mathcal N_{Q_1/\proj^n}^*)\simeq \proj (\Ol(-2) \oplus \Ol(-1)^{\oplus n-2-t})$, with the strict transform of  $\Lambda_2$ cutting the section corresponding
to the surjection $\mathcal N_{Q_1/\proj^n}^* \to \Ol(-2)$ we have that $\conx{E_2}=\langle [\overline \ell_2], [\ell_2] \rangle$, while every curve contained in the strict transform of $\Lambda_2$ is numerically proportional to $[\overline\ell_2]$.\\
Since $D_3 \cdot \ell_2 =1$ and $D_3 \cdot \overline \ell_2 =0 $ we get a contradiction
which proves the nefness of $D_3$.\\
We have four nef line bundles: $\mathcal H, D_1,D_2$ and $D_3$; the cone of curves is therefore contained in the intersection of the positive halfspaces of $\cycl(X)$ determined by them.
By looking at the intersection numbers with the four curves $\ell_1, \ell_2, \overline \ell_1,  \overline \ell_2$:\par
\medskip
\begin{center}
\begin{tabular}{c|c|c|c|c}

	&$\ell_1$&$\ell_2$&$\overline \ell_1$&$\overline \ell^{{~}^{~}}_2$\\
	\hline
	$\mathcal{H}$&$0$&$0$ &$1$ &$1$\\
	\hline
	${D}_1$&$1$&$0$ &$0$ &$1$\\
	\hline
	${D}_2$&$0$&$1$ &$1$ &$0$\\
	 \hline
	$D_3$&$1$&$1$ &$0$ &$0$\\
    \hline
  \end{tabular}
\end{center}\par
\medskip
\noindent we see that $\cone(X)$ is the intersection of those halfspaces, that is spanned by four rays,
$R^1 = \mathbb R_+[\ell_1]$, $R^2 = \mathbb R_+[\ell_2]$, $R^3 = \mathbb R_+[\overline \ell_1]$,
$R^4 = \mathbb R_+[\overline \ell_2]$ and that the position of these rays is as in the next figure, which shows a cross section of $\cone(X)$.
\begin{center}
\includegraphics[width=6.5cm]{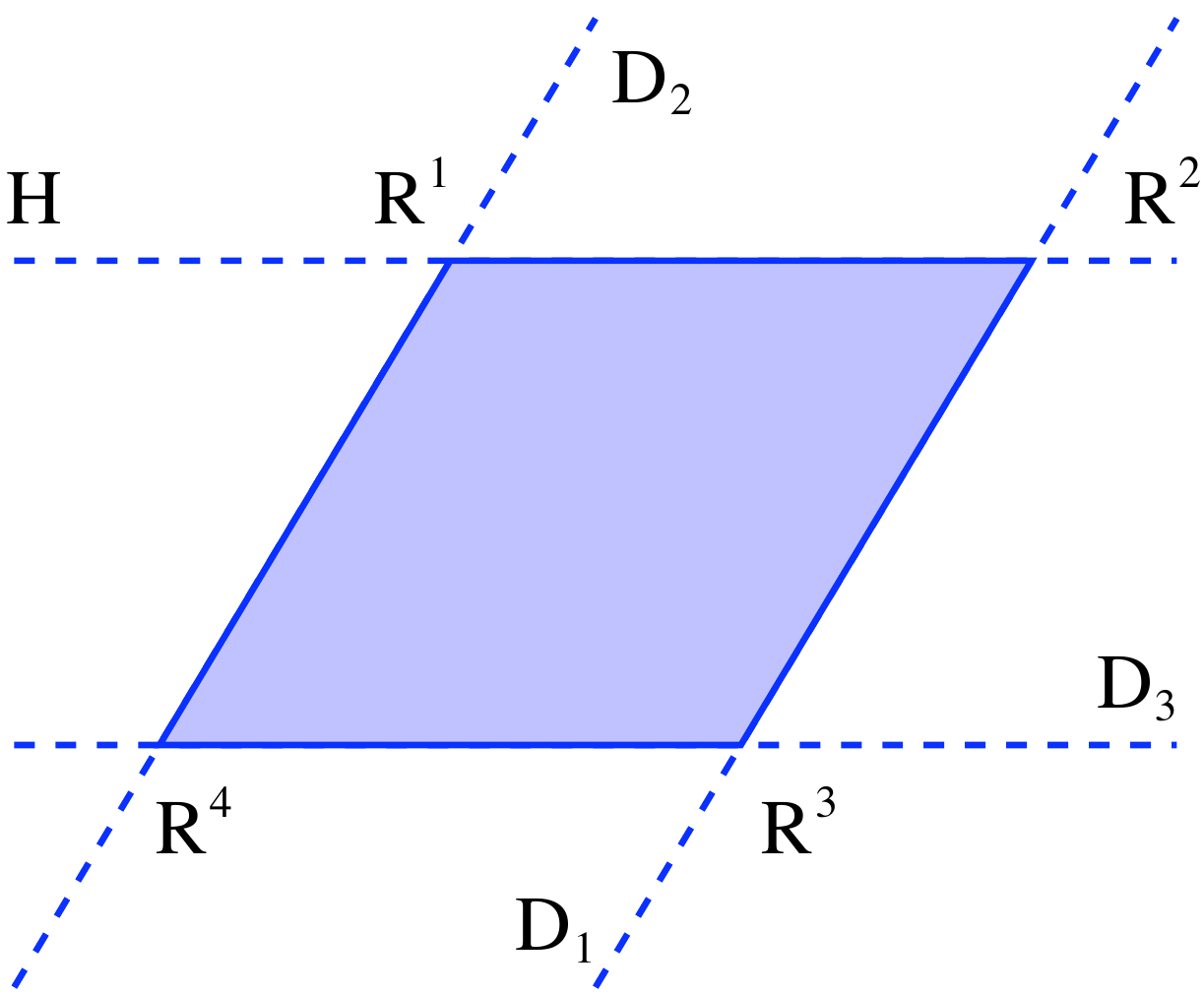}
\end{center}
Clearly the elementary contractions associated to $R^1$ and $R^2$ are the the blow-downs of $E_1$ and $E_2$. The elementary contraction associated to $R^3$ is divisorial, and its exceptional locus is the strict transform of the join $J(\Lambda_1,Q_2)$, which is a divisor linearly equivalent to $2\mathcal{H} -2E_1-E_2$. Moreover, the contraction associated to $R^4$ contracts the strict transform of $\Lambda_2$; hence if $\dim \Lambda_2=n-1$ this contraction is divisorial, otherwise it is small.\par
\smallskip
\noindent
\textbf{Description the families of rational curves}
\vskip1mm
\noindent
In this example the family $V$ of cubics is the family of deformations of the strict transform of a general line of $\mathbb{P}^n$; the set $\mathcal B'$ consists of two pairs, $(\li^1,C^1)$ and $(\li^2,C^2)$: the families $\li^i$ are the families of lines contracted by the blow-down, the family $C^1$ is the family of strict transforms of lines in $\proj^n$ meeting $\Lambda_1$ at one point and the family $C^2$ is the family of strict transforms of lines in $\proj^n$ meeting $Q_1$ at one point.\\
Curves parametrized by  $C^1$ degenerate into the strict transform of a line meeting $\Lambda_1$ and $Q_1$ and a line  contracted by $\varepsilon_2$, while curves parametrized by $C^2$ degenerate in two possible ways: either as a line contracted by $\varepsilon_2$ and the strict transform of a line contained in $\Lambda_2$ or as a line contracted by $\varepsilon_1$  and the strict transform of a line meeting both $\Lambda_1$ and $Q_1$. \begin{center}
\includegraphics[width=6cm]{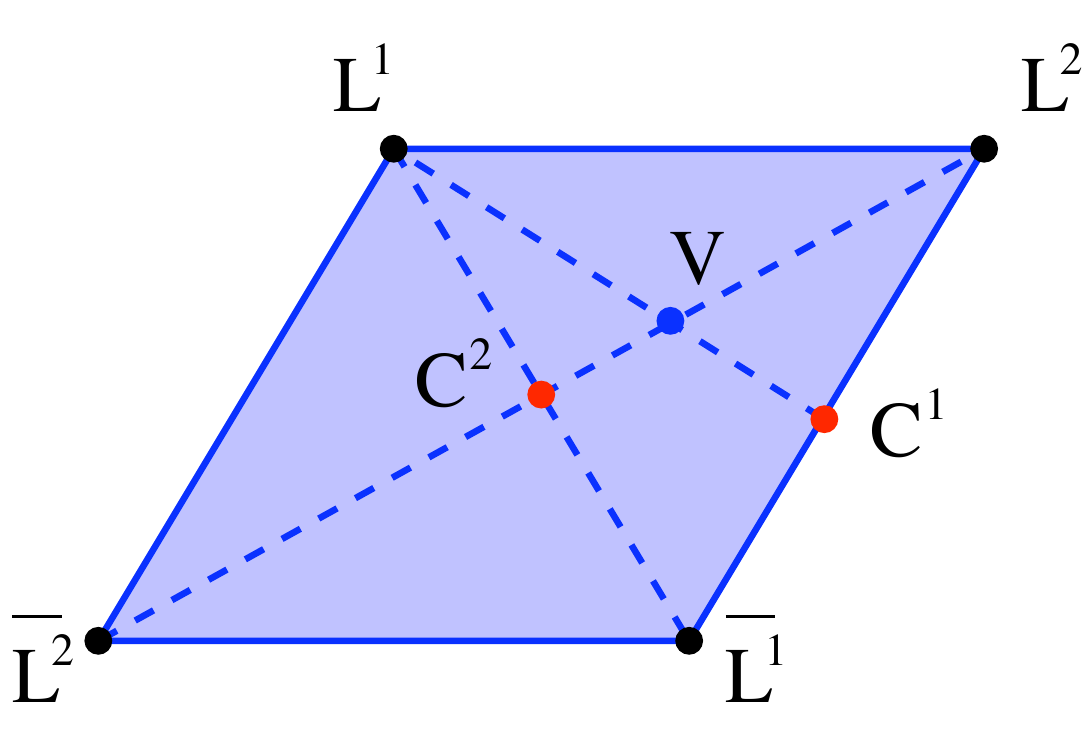}
\end{center}
\end{example}

\begin{example}
 $(X,H)\simeq(Bl_{Q_1,Q_2}(\mathbb{P}^n),3\mathcal{H}-E_1-E_2)$, where $Bl_{Q_1,Q_2}(\mathbb{P}^n)$ is the blow-up of $\mathbb{P}^n$ along two smooth quadrics  $Q_1\subset\Lambda_1\simeq\mathbb{P}^{\frac{n}{2}}$ and $Q_2\subset\Lambda_2\simeq\mathbb{P}^{\frac{n}{2}}$ such that 
$$\begin{array}{lll}
Q_1 \cap Q_2 = \emptyset,&
\dim \Lambda_1\cap\Lambda_2=0,&
\dim Q_1=\dim Q_2=\frac{n}{2}-1,
\end{array}$$		
$E_1,E_2$ are the exceptional divisors of  $\pi$ and $\mathcal{H}=\pi^*\mathcal{O}_{\mathbb{P}^n}(1)$.
Denote by 
\begin{itemize}
\item $R^i$ the extremal ray corresponding to the contraction of $E_i$;
\item $\varepsilon_i$ the contraction associated to $R^i$;
\item $\ell_i$ a minimal curve whose numerical class is in $R^i$;
\item $\overline \ell_i$ a curve which is the strict transform of a line  contained in $\Lambda_i$;
\item $D_i=2\mathcal{H}-E_i$;
\item $D_3=2\mathcal{H}-E_2-E_1$.
\end{itemize}
\noindent  As  in Example (\ref{lq}) we can show that $\cone(X)$ is the intersection of the halfspaces determined by the nef divisors $\mathcal{H},D_1,D_2,D_3$, that is spanned by four rays,
$R^1 = \mathbb R_+[\ell_1]$, $R^2 = \mathbb R_+[\ell_2]$, $R^3 = \mathbb R_+[\overline \ell_1]$,
$R^4 = \mathbb R_+[\overline \ell_2]$ and that the position of these rays is as in the next figure, which shows a cross section of $\cone(X)$.
\begin{center}
\includegraphics[width=7.5cm]{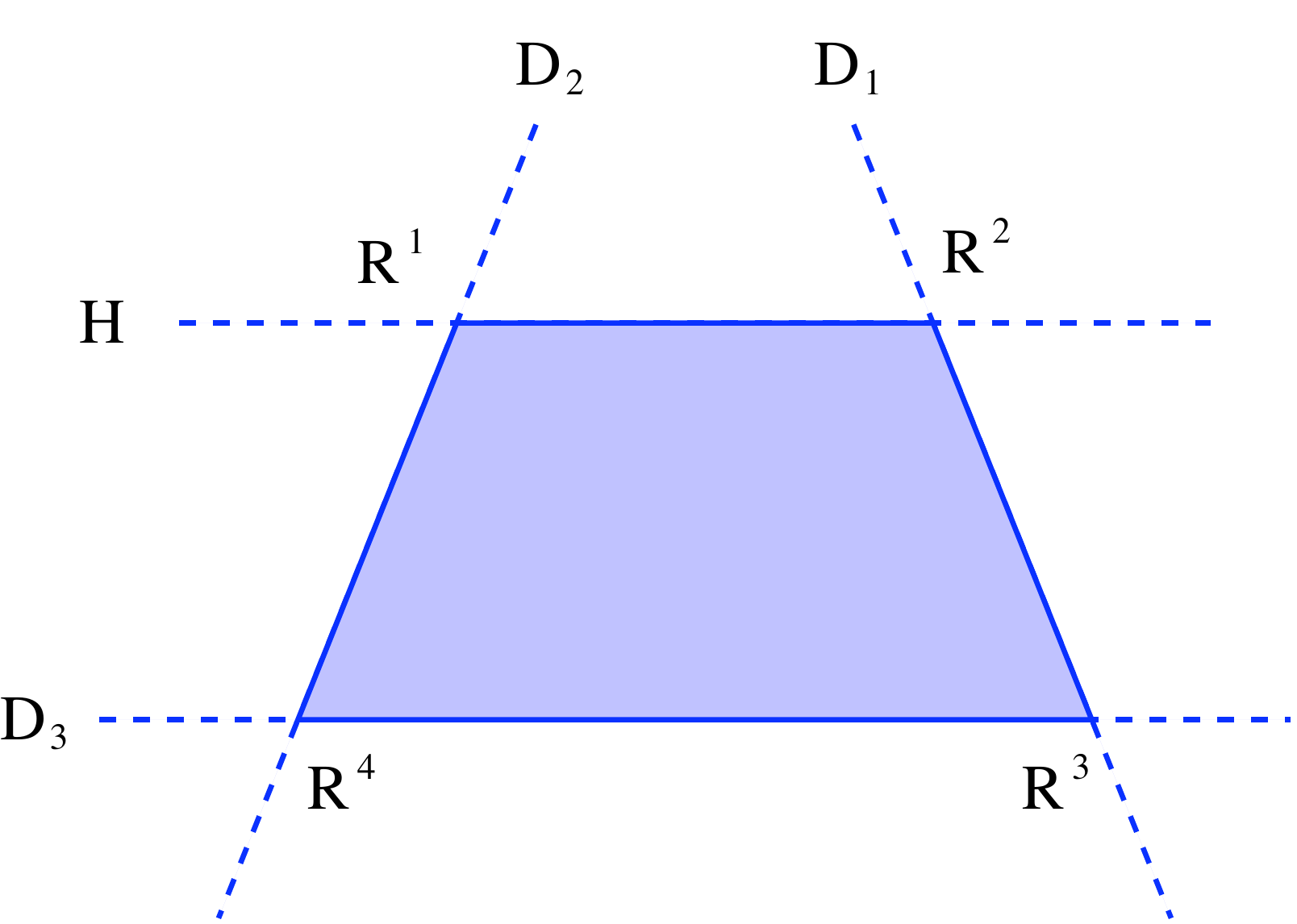}
\end{center}
Clearly the elementary contractions associated to $R^1$ and $R^2$ are the the blow-downs of $E_1$ and $E_2$. The elementary contractions associated to $R^3$ and to $R^4$ are small with exceptional loci of dimension $\frac{n}{2}$ which are the strict transforms of the linear spaces $\Lambda_i$.\\
Let us spend a few words on the contraction  of the face $\sigma =\langle R^3, R^4 \rangle$.\\
Let $P$ be the intersection point of $\Lambda_1$ and $\Lambda_2$, and let $\Sigma$ be a $2$-plane passing through $P$ and meeting $\Lambda_1$ and $\Lambda_2$ in two lines, $l_1$ and $l_2$. It is not difficult to see that for a general point of $\proj^n$ there is exactly one such $2$-plane.\\
For a general point $Q$ of $\Sigma$ there is a conic passing through $Q$ and through the (four) points of intersections of $\Sigma$ with $Q_1$ and $Q_2$; denote by $\gamma$ this conic. Since $\gamma$ meets both $Q_1$ and $Q_2$ in two points we have $E_1 \cdot \gamma=E_2 \cdot \gamma=2$.\\
The contraction $\f:X \to Y$ of $\sigma$ is supported by $D_3=2\mathcal H -E_1 -E_2$, hence it contracts $\gamma$; it follows that $\f$ is of fiber type. The restriction of $D_3$ to $E_1$ is big, hence $\dim Y=n-1$.\\
Therefore $\f$ is a conic bundle; the divisor of reducible conics is the strict transform of the join $J(Q_1,Q_2)$, and there is one special fiber of dimension $n-2$ consisting of two irreducible components which are projective spaces meeting at a point, namely the strict transforms of $\Lambda_1$ and $\Lambda_2$.\par
\smallskip
\noindent
\textbf{Description the families of rational curves}
\vskip1mm
\noindent
In this example the family $V$ of cubics is the family of deformations of the strict transform of a general line of $\mathbb{P}^n$; the set $\mathcal B'$ consists of two pairs, $(\li^1,C^1)$ and $(\li^2,C^2)$: the families $\li^i$ are the families of lines contracted by the blow-down, while the families $C^i$ are the families of strict transforms of lines in $\proj^n$ meeting $Q^i$ at one point.\\
Curves in $C^i$ degenerate in two possible ways: either as a line contracted by $\varepsilon_i$ and the strict transform of a line contained in $\Lambda_i$ or as a line contracted by $\varepsilon_j$ ($i \not = j$) and the strict transform of a line meeting both $Q_1$ and $Q_2$. This last curves are numerically proportional to the conics meeting both
$Q_1$ and $Q_2$ in two points.\\
\begin{center}
\includegraphics[width=6cm]{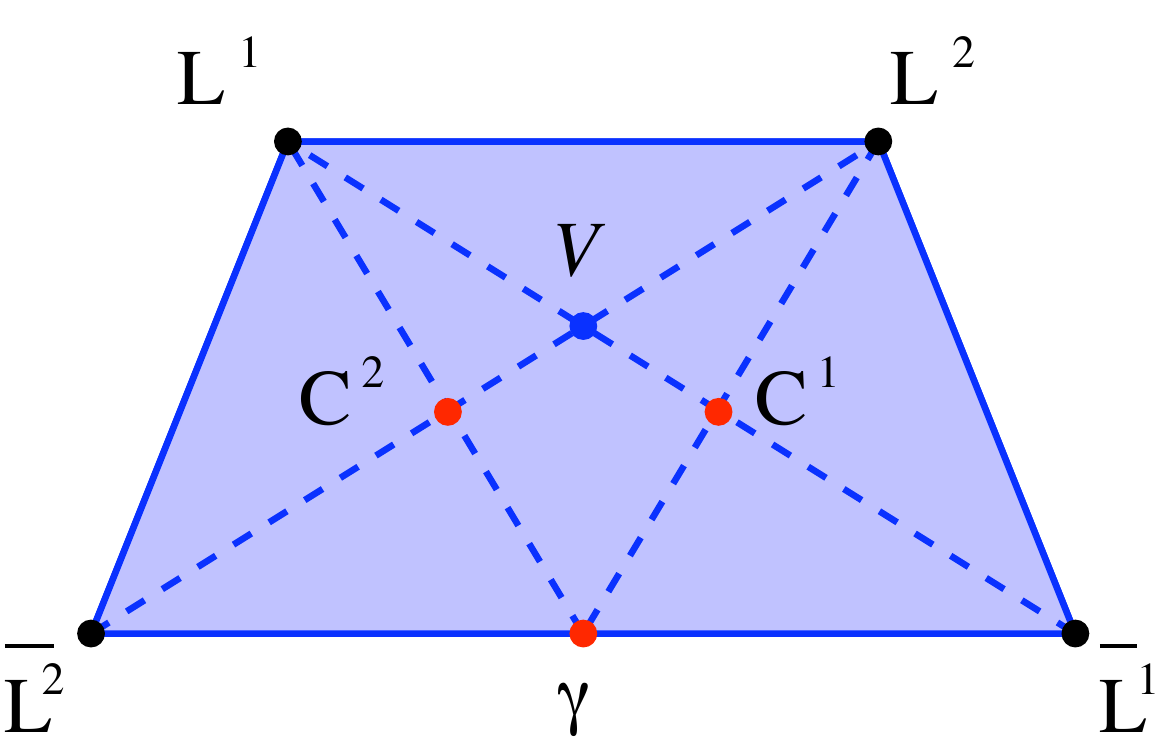}
\end{center}

\end{example}


\bigskip
\noindent
\small{{\bf Acknowledgements}. 
We would like to thank Francesco Russo and Massimiliano Mella for many useful comments and suggestions.\par
\medskip
\noindent
\small{{\bf Note}. 
This work grew out of a part of the second author's Ph.D. thesis \cite{P} at the
Department of Mathematics of the University of Trento.\par

\end{document}